\documentclass[10pt]{article}

\usepackage{amscd,amsmath, amssymb, fancyhdr, mathbbol, dutchcal, url}
\usepackage[matrix,arrow,curve]{xy}
\usepackage[backref=page]{hyperref}
\renewcommand*{\backrefalt}[4]{%
	\ifcase #1 (Not cited.)%
	\or        (Cited on page~#2.)%
	\else      (Cited on pages~#2.)%
	\fi}

\hypersetup{
	colorlinks   = true,
	citecolor    = magenta,
	linkcolor    =blue,
	urlcolor     =magenta	
}


\numberwithin{equation}{section}


\newcommand{\version}{version 1.1,\ \ June 29, 2020}

\hyphenation{plu-ri-sub-har-mo-nic}

\def\eqref#1{(\ref{#1})}

\newcommand{\g}{{\mathfrak g}}

\def\C{{\Bbb C}}
\newcommand{\R}{{\Bbb R}}

\renewcommand{\H}{{\Bbb H}}

\renewcommand{\c}{\mathfrak c}
\def\1{\sqrt{-1}\:}
\newcommand{\restrict}[1]{{\left|_{{\phantom{|}\!\!}_{#1}}\right.}}
\newcommand{\cntrct}                
{\hspace{2pt}\raisebox{1pt}{\text{$\lrcorner$}}\hspace{2pt}}
\newcommand{\twice}{/\!\!/}
\newcommand{\trice}{/\!\!/\!\!/}

\newcommand{\calM}{\mathcal M}
\newcommand{\calO}{\mathcal O}

\renewcommand{\P}{\mathbb P}



\renewcommand{\tilde}{\widetilde}
\renewcommand{\bar}{\overline}
\renewcommand{\phi}{\varphi}
\renewcommand{\epsilon}{\varepsilon}

\newcommand{\<}{\langle}
\renewcommand{\>}{\rangle}


\newcommand{\im}{\operatorname{im}}
\newcommand{\End}{\operatorname{End}}
\newcommand{\Ext}{\operatorname{Ext}}
\newcommand{\Tot}{\operatorname{Tot}}

\newcommand{\id}{\operatorname{\text{\sf id}}}

\newcommand{\diag}{\operatorname{\sf diag}}

\newcommand{\Tw}{\operatorname{Tw}}

\renewcommand{\Re}{\operatorname{Re}}
\renewcommand{\Im}{\operatorname{Im}}


\newcounter{Mycounter}[section]
\newcounter{lemma}[section]
\setcounter{lemma}{0}

\newcounter{claim}[section]
\setcounter{claim}{0}

\newcounter{sublemma}[section]
\setcounter{sublemma}{0}

\newcounter{corollary}[section]
\setcounter{corollary}{0}

\newcounter{theorem}[section]
\setcounter{theorem}{0}

\newcounter{conjecture}[section]
\setcounter{conjecture}{0}

\newcounter{proposition}[section]
\setcounter{proposition}{0}

\newcounter{definition}[section]
\setcounter{definition}{0}
\renewcommand{\thedefinition}
      {{Definition~\thesection.\arabic{definition}}}
\newcommand{\definition}{%
    \setcounter{definition}{\value{Mycounter}}
    \refstepcounter{definition}
    \stepcounter{Mycounter}
    {\noindent \bf \thedefinition:\ }}

\newcounter{example}[section]
\setcounter{example}{0}

\newcounter{remark}[section]
\setcounter{remark}{0}
\renewcommand{\theremark}{{Remark \thesection.\arabic{remark}}}
\newcommand{\remark}{%
    \setcounter{remark}{\value{Mycounter}}
    \refstepcounter{remark}
    \stepcounter{Mycounter}
    {\noindent \bf \theremark:\ }}

\newcounter{problem}[section]
\setcounter{problem}{0}

\newcounter{question}[section]
\setcounter{question}{0}

\newcommand{\pstep}{\noindent{\bf Proof. Step 1:\ }}

\makeatletter

\setlength{\headheight}{15pt}
\pagestyle{fancy}  \cfoot{-- \thepage \ -- } \rfoot{\tiny \sc\version}

\@addtoreset{equation}{section} \@addtoreset{footnote}{section}
\makeatother

\def\blacksquare{\hbox{\vrule width 5pt height 5pt depth 0pt}}
\def\endproof{\blacksquare}

\begin{document}
\begin{center}
{\LARGE\bf
Feix--Kaledin metric on the total spaces of cotangent bundles to K\"ahler quotients\\[4mm]
}

Anna Abasheva

\end{center}
{\small \hspace{0.10\linewidth}
\begin{minipage}[t]{0.85\linewidth}
{\bf Abstract.}
In this paper we study the geometry of the total space $Y$ of a cotangent bundle to a K\"ahler manifold $N$ where $N$ is obtained as a K\"ahler reduction from $\C^n$. Using the hyperk\"ahler reduction we construct a hyperk\"ahler metric on $Y$ and prove that it coincides with the canonical Feix--Kaledin metric. This metric is in general non-complete. We show that the metric completion $\tilde Y$ of the space $Y$ is equipped with a structure of a stratified hyperk\"ahler space (in the sense of \cite{Mayrand_stratification}). We give a necessary condition for the Feix--Kaledin metric to be complete using an observation of R. Bielawski. Pick a complex structure $J$ on $\tilde Y$ induced from the quaternions. Suppose that $J\ne\pm I$ where $I$ is the complex structure whose restriction to $Y = T^*N$ is induced by the complex structure on $N$. We prove that the space $\tilde{Y}_J$ admits an algebraic structure and is an affine variety.

\end{minipage}
}

\tableofcontents

\section{Introduction}

Many known examples of non-compact hyperk\"ahler manifolds arise as total spaces of cotangent bundles to certain K\"ahler manifolds or contain them as an open dense subset (\cite{Kovalev}, \cite{_BiquardG_}, \cite{Bielawski_Dancer}, \cite{Nakajima_Takayama} ). Moreover, all these examples are equipped with a natural $U(1)$-action compatible with the hyperk\"ahler structure in the sense of the following definition.

\hfill

\definition
\label{compatible_action}
Let $X$ be a hyperk\"ahler manifold with the hyperk\"ahler metric $g$ and the holomorphic symplectic form $\Omega\in \Lambda^{2,0}M$. A holomorphic $U(1)$-action on $X$ is said to be {\em HKLR-compatible} with the hyperk\"ahler structure if:
\begin{itemize}
\item[(i)] The metric $g$ is $U(1)$-invariant.
\item[(ii)] $\mathsf L_{\phi}\Omega = \sqrt{-1}\Omega$ where $\phi$ is the vector field tangent to the action (equivalently, $\lambda^*\Omega = \lambda\Omega$ for every $\lambda\in U(1)$).
\item[(iii)] The fixed-point set of the $U(1)$-action is a complex Lagrangian submanifold.
\end{itemize}

\hfill

\remark The $U(1)$-action defined above does not preserve all the complex structures but rotates them. Circle actions preserving all the complex structures will not be considered in this paper. However, they arise in the context of hK/qK-correspondence (\cite{Haydys}) which links both types of circle actions with quaternionic K\"ahler geometry.

\hfill

A universal way to construct hyperk\"ahler manifolds with a HKLR-compatible $U(1)$-action is provided by the following theorem proved independently by D. Kaledin and B. Feix.

\hfill

\begin{theorem}
\label{Feix_Kaledin_preliminary}
(\cite{_Feix1_},\cite{_Feix2_},\cite{_Kaledin1_},\cite{_Kaledin2_}) Let $M$ be a K\"ahler manifold such that the corresponding K\"ahler metric is real analytic.  Then there exists a $U(1)$-invariant neighbourhood $X$ of the zero section of $T^*M$ which admits a hyperk\"ahler structure (\ref{hk}) $(g,I,J,K)$ satisfying the following properties:

\begin{itemize}
\item[(i)] Consider the natural complex structure $I$ on $T^*M$ induced by the complex structure on $M$. Then the holomorphic symplectic form $\Omega\in\Lambda^{2,0}_I X$ induced by the hyperk\"ahler structure is the restriction to $X$ of the canonical holomorphic symplectic form on $T^*M$ (formula~(\ref{standard_two_form})).
\item[(ii)] The natural $U(1)$-action on $X$ by fiberwise rotations is HKLR-compatible with the hyperk\"ahler structure.
\item[(iii)] The K\"ahler structure $(g,I)$ restricts to the given K\"ahler structure on $M$.
\end{itemize}

Moreover this hyperk\"ahler structure is unique up to a linear automorphism of $X$.
\end{theorem}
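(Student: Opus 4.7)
The natural approach is Feix's twistor construction. A hyperkähler structure with an HKLR-compatible $U(1)$-action is equivalent data to a twistor space $Z\to \P^1$ equipped with a real structure covering the antipodal map on $\P^1$, a compatible $\C^*$-action covering the standard action on $\P^1$, and a family of real twistor lines of normal bundle $\calO(1)^{\oplus 2n}$; the Hitchin--Karlhede--Lindström--Roček inverse twistor theorem then recovers the hyperkähler metric on the parameter space $X$ of such lines. The task therefore reduces to constructing $Z$ from the Kähler data on $M$ and checking all of the listed structures.

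Since the Kähler metric on $M$ is real analytic, the Kähler potential admits a holomorphic extension to a neighborhood $U$ of the diagonal in $M\times \bar M$; equivalently, $M$ sits as a totally real submanifold of a germ of a complex manifold $M_\C$ of twice the complex dimension of $M$. I would construct $Z$ by gluing two charts over $\P^1$: a neighborhood of $\lambda=0$ identified with an open set in $T^*M\times \C$, and a neighborhood of $\lambda=\infty$ identified with an open set in $T^*\bar M\times\C$, with the transition on $\C^*$ built from the complexified holomorphic symplectic form coming from the extended potential on $M_\C$. The $\C^*$-action scaling the cotangent fibers and twisting the base of $\P^1$, together with the real involution swapping the two charts, are essentially forced by the construction.

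Verification of the three properties then goes as follows. Twistor lines meeting the zero section $M\subset T^*M\subset Z$ are constant in both charts and hence extend to global real sections whose normal bundle is $\calO(1)^{\oplus 2n}$; a Kodaira deformation argument produces an open family of real sections, which is the space $X$ of the theorem. The fiber $Z_0$ is $T^*M$ with its canonical holomorphic symplectic form by construction, which is property (i); the $U(1)\subset\C^*$ subgroup covering the rotation of $\P^1$ acts on $Z_0$ as fiberwise rotation on $T^*M$, and the $\C^*$-action twists the symplectic form by $\lambda$, so $\mathsf{L}_\phi\Omega=\sqrt{-1}\,\Omega$, giving (ii); the restriction to the zero section recovers $(g,I)$ on $M$ because constant twistor lines through $M$ are precisely those induced from the original Kähler structure, giving (iii). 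For uniqueness, any $(g,I,J,K)$ satisfying (i)--(iii) produces, via the direct twistor construction, a $Z$ with the same normalizations, and these normalizations pin $Z$ down locally up to the linear rescaling automorphisms of the cotangent fibers, which lifts back to an isomorphism of $X$.

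The main obstacle is the explicit construction of $Z$ in step two: one must check that the transition functions between the two natural charts are holomorphic on a definite neighborhood of the zero section and that the resulting total space is a genuine complex manifold. This is precisely where real-analyticity of the Kähler metric enters, through the holomorphic extension of the potential from $M$ to $U\subset M\times\bar M$. The secondary delicate point is verifying the normal bundle splitting $\calO(1)^{\oplus 2n}$ of the distinguished twistor lines, without which the HKLR theorem would not produce a positive-definite Riemannian metric; one handles this by computing the splitting along the zero section using the explicit charts and invoking openness of the splitting type.
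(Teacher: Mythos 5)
First, a point of calibration: the paper does not prove this theorem at all --- it is quoted from Feix and Kaledin with citations, and the only place the paper engages with the construction is the Appendix, which reworks one piece of Feix's argument (the identification of the sheaf $\mathcal F^\vee$ with the twistor sheaf). Your overall strategy --- build the twistor space $Z\to\P^1$ from two halves, equip it with the real structure and the $\C^\times$-action, exhibit real sections with normal bundle $\calO(1)^{\oplus 2n}$, and invoke the HKLR inverse twistor theorem --- is indeed Feix's route, so the plan is the right one.

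There is, however, a concrete error in your description of the charts, and it touches exactly the point the paper's Appendix is about. You identify the half of $Z$ over a neighbourhood of $\lambda=0$ with an open set in $T^*M\times\C$. This is false whenever $[\omega]\neq 0$ in $H^{1,1}(M)$ (e.g.\ $M$ compact): the restriction of the twistor fibration to $\mathbb A^1$ is the total space of the rank-$(n+1)$ bundle $F^*$ sitting in the \emph{nontrivial} extension $0\to\Omega M\to F^\vee\to\calO_M\to 0$ with extension class $[\omega]$, so the fibre over $t\neq 0$ is the $t[\omega]$-twisted cotangent bundle --- an affine bundle modelled on $\Omega M$ with no holomorphic sections, not an open subset of $T^*M$ compatibly with the fibration. (For $M=\P^1$ this is the difference between $\Tot(\calO(-1)\oplus\calO(-1))$ and $\Tot(\calO(-2)\oplus\calO)$.) This is precisely the content of \ref{very_nice_theorem} and \ref{twistedtwist}. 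The nontriviality of this extension is where the K\"ahler class enters the construction; with your product chart the dependence on $\omega$ disappears and property (iii) cannot hold. Beyond this, the proposal defers all of the actual analytic content --- the construction of the transition functions from the complexified potential, their holomorphicity on a definite neighbourhood, and the normal-bundle computation --- to a paragraph labelled ``the main obstacle,'' so even with the chart corrected this is an outline of Feix's proof rather than a proof.
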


\hfill

\begin{definition} We shall call the hyperk\"ahler structure on $X$ uniquely defined by \ref{Feix_Kaledin_preliminary} the {\em Feix--Kaledin hyperk\"ahler structure}.
\end{definition}

\hfill

\remark A nice construction of the Feix--Kaledin hyperk\"ahler structure in a more general setting was given by A. Bor\'owka and D. Calderbank in \cite{Borowka_Calderbank}. Their method is close to the one of B. Feix. A simpler and more explicit geometric approach was discovered by R. Bielawski in \cite{Bielawski_complexification}. However, his methods are limited to a certain class of K\"ahler manifolds, namely complexifications of Riemannian manifolds.

\hfill

Our goal in this paper is to study a series of examples of K\"ahler manifolds $M$ such that $T^*M$ admits globally defined Feix--Kaledin metric. These examples are obtained via K\"ahler reduction from finite dimensional unitary representations. We shall postpone the description of their construction to the next section and now formulate the main theorem.

\hfill

\begin{theorem}
\label{mmm}
Let $V$ be a unitary representation of a compact group $C$ and $T^*V$ the total space of its cotangent bundle with the natural hyperk\"ahler metric. Denote by $N:=V\twice C$ the K\"ahler quotient of $V$ by the Hamiltonian action of $C$. The following assertions hold:
\begin{itemize}
\item[(i)] The Feix--Kaledin hyperk\"aler metric on $T^*N$ is defined globally.
\item[(ii)] There exists a natural open embedding of the hyperk\"ahler manifold $T^*N$ to the hyperk\"ahler quotient $(T^*V)\trice C$, moreover, the last variety is complete as a metric space.
\end{itemize}
\end{theorem}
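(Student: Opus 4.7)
The plan is to construct a hyperk\"ahler metric on $T^*N$ by hyperk\"ahler reduction of the flat structure on $T^*V$ and then identify it with the Feix--Kaledin metric via the uniqueness clause in \ref{Feix_Kaledin_preliminary}. Identify $T^*V$ with $V\oplus V^*$ endowed with the flat hyperk\"ahler structure coming from $V\otimes_{\R}\H$. The unitary $C$-action on $V$ lifts to a hyperk\"ahler isometric action on $T^*V$, so we may form $M:=(T^*V)\trice C$. By construction $M$ carries a hyperk\"ahler structure on its smooth stratum, and a $U(1)$-action by fiberwise rotations (commuting with $C$) descends from $T^*V$; by a standard compatibility argument this descended action is HKLR-compatible in the sense of \ref{compatible_action}.

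\textbf{Construction of the open embedding.} Consider the open subset of $\mu^{-1}(0)\subset T^*V$ where the $C$-action is locally free; this contains the preimage of the Kähler-stable locus $V^{\mathrm{st}}\subset V$ under the projection $T^*V\arrow V$. The classical principle that cotangent bundles commute with symplectic reduction, applied in the $I$-holomorphic category with respect to the complex moment map $\mu_\C(v,\xi)(c)=\xi(cv)$, produces a natural isomorphism of this locally free locus, modulo $C$, with $T^*(V^{\mathrm{st}}/C^\C)=T^*N$, as holomorphic symplectic manifolds in complex structure $I$. This furnishes the natural open embedding $T^*N\hookrightarrow M$ of assertion (ii).

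\textbf{Matching with Feix--Kaledin.} On the image $T^*N\subset M$ the reduction-metric is smooth and hyperk\"ahler, and we verify the three conditions of \ref{Feix_Kaledin_preliminary}. Condition (i) is the content of the previous step. Condition (ii) holds because the fiberwise $U(1)$ descends from a $U(1)$ on $T^*V$ that commutes with $C$ and is HKLR-compatible upstairs; HKLR-compatibility is preserved under hyperk\"ahler reduction by a commuting symmetry. Condition (iii) is automatic: restricting the reduction to the zero locus of the cotangent variable recovers the K\"ahler reduction $V\twice C=N$ with its given K\"ahler metric. Uniqueness in \ref{Feix_Kaledin_preliminary} then forces the reduction metric to agree with the Feix--Kaledin metric on $T^*N$, which proves assertion (i) and shows that the Feix--Kaledin metric is in particular defined on the entire cotangent bundle.

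\textbf{Completeness and main obstacle.} For assertion (ii), completeness follows from a soft argument: $T^*V$ is a Euclidean vector space, hence complete; $\mu^{-1}(0)$ is closed in $T^*V$ as a preimage of a point under the continuous moment map, hence complete; and since $C$ is compact, the $C$-action on $\mu^{-1}(0)$ is proper, so the orbit-space metric is a genuine distance and completeness is inherited by lifting Cauchy sequences to $\mu^{-1}(0)$ (after passing to a subsequence, using infimum-of-distances and properness). The subtle step I expect to require the most care is the holomorphic-symplectic identification in the second paragraph: one must match the complex-moment-map level set with horizontal covectors along the orthogonal splitting $TV|_{\mu_I^{-1}(0)}=T(Cp)\oplus \mathrm{grad}(\mu_I)\oplus T_p N$ in sufficient generality so that the canonical one-form on $T^*V$ reduces to the canonical one-form on $T^*N$; this is where the K\"ahler (not just real symplectic) input is needed to pin down the holomorphic symplectic form unambiguously.
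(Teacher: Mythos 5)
Your overall architecture --- flat hyperk\"ahler reduction of $T^*V$ by $C$, identification of the reduced space over the stable locus with $T^*N$, and then invoking the uniqueness clause of \ref{Feix_Kaledin_preliminary} via the standard holomorphic symplectic form, the HKLR-compatible fiberwise $U(1)$, and the restriction to $N$ --- is exactly the paper's (proof of \ref{very_long_proof} together with \ref{comp}), including the soft completeness argument for $(T^*V)\trice C$. The one place where your outline omits a genuinely necessary input is in making the embedding $T^*N\hookrightarrow (T^*V)\trice C$ defined on \emph{all} of $T^*N$. The principle that ``cotangent bundles commute with reduction'' in the holomorphic symplectic category identifies $T^*N$ with $\calM^{-1}(0)\cap\pi^{-1}(V^{s})$ modulo the \emph{complexified} group $G$; to pass from this $G$-quotient to the hyperk\"ahler quotient $\mu_{hk}^{-1}(0)/C$ you must know that every point of $\calM^{-1}(0)$ lying over a stable $v\in V^{s}$ is itself (semi)stable in $T^*V$ for the K\"ahler moment map $\mu_I$, i.e.\ that its $G$-orbit actually meets $\mu_I^{-1}(0)$. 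This is not formal: a covector $\alpha\in \operatorname{Ann}(\g\cdot v)$ with $\mu_I(v)=0$ satisfies $\calM(\alpha)=0$ but in general $\mu_I(\alpha)\ne 0$, so one must flow along the $G$-orbit to reach the zero level, and the existence of a limit point there is precisely a stability statement. The paper supplies this as \ref{_fine_}, via the Hilbert--Mumford weight computation for the direct sum $V\oplus V^*$ (\ref{fff} and \ref{_direct_sum_ss_}). Without it you only obtain the metric on the open subset of $T^*N$ swept out by the hyperk\"ahler level set, whereas assertion (i) (``defined globally'') is exactly the claim that this subset is everything. Your closing remark correctly locates the delicate step, but the orthogonal splitting of $TV$ you propose addresses only the descent of the tautological one-form (which the paper instead handles by $G$-invariance and horizontality), not this surjectivity/stability issue; a smaller point in the same vein is that one must also check that the circle action descending from $T^*V$ really is fiberwise rotation on $T^*N$ (Step 3 of the paper's proof), which you assert but do not verify.
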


\hfill

We are able to give a necessary condition for the Feix--Kaledin metric on $T^*N$ to be complete.

\hfill

\begin{theorem}
Let $N = V\twice C$ be a K\"ahler quotient of a vector space $V$. Suppose that $N$ is smooth and compact. If the Feix--Kaledin metric on $Y = T^*N$ is complete then the tangent bundle $TN$ to the complex manifold $N$ is big and nef.
\end{theorem}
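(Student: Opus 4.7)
The plan is to argue the contrapositive, using Theorem~\ref{mmm} to translate completeness of the Feix--Kaledin metric into an algebraic assertion about the hyperk\"ahler quotient $(T^*V)\trice C$, and then to extract positivity of $TN$ from this assertion.

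\textit{First reduction.} By Theorem~\ref{mmm}(ii), the embedding $T^*N\hookrightarrow (T^*V)\trice C$ is open, and its target is complete as a metric space; hence completeness of the Feix--Kaledin metric on $T^*N$ is equivalent to surjectivity of this embedding. In complex structure $I$, Kempf--Ness identifies $(T^*V)\trice C$ with the complex-algebraic GIT quotient of $\mu_\C^{-1}(0)$ by $C^\C$ (with the linearisation corresponding to the K\"ahler moment-map level), where $\mu_\C:V\oplus V^*\to\c^*\otimes\C$ is the complex moment map; under this identification the embedded $T^*N$ is the open subvariety of classes $[(v,w)]$ with $v\in V^{ss}$. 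Completeness thus translates into the algebraic condition: no $C^\C$-semistable pair $(v,w)\in\mu_\C^{-1}(0)$ has $v$ unstable in $V$.

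\textit{Extracting positivity of $TN$.} Here I would invoke the observation of R. Bielawski. For each destabilising $1$-parameter subgroup $\lambda:\C^*\to C^\C$ of a point in $V\setminus V^{ss}$, the $\lambda$-weight decompositions of $V$ and $V^*$ produce a pair $(C_\lambda,\text{splitting})$ consisting of a rational curve $C_\lambda\subset N$ (obtained as the image under $V^{ss}\to N$ of a generic $\lambda$-orbit closure) together with a splitting of $TN|_{C_\lambda}$. The existence of an extra semistable pair $(v,w)\in\mu_\C^{-1}(0)$ with $v$ destabilised by $\lambda$ is equivalent --- by Kempf--Ness applied to the dual $V^*$-action --- to the existence of a negative-degree summand in $TN|_{C_\lambda}$. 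Running $\lambda$ over the Hesselink stratification of $V\setminus V^{ss}$ produces a family of curves generating the Mori cone of $N$, so the algebraic condition of the first reduction implies nefness of $TN$. For bigness, consider the graded ring $R=\bigoplus_{k\geq 0}R_k$ with $R_k=H^0(N,S^k TN)$ (fiberwise Taylor expansion along the zero section): this ring is identified with the $\C^*_I$-eigenfunction subring on $(T^*V)\trice C$, and when this hyperk\"ahler quotient coincides exactly with $T^*N$ its Krull dimension equals $2\dim_\C N=\dim N+\rk TN$, which is the numerical criterion for bigness.

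\textit{Main obstacle.} The hardest step is making precise the dictionary ``destabilising $1$-ps $\lambda$ for $V$'' $\longleftrightarrow$ ``rational curve $C_\lambda\subset N$ with a prescribed splitting of $TN|_{C_\lambda}$,'' and checking the exact weight-matching on the two sides of the Kempf--Ness equivalence. This requires a careful analysis of the Kirwan stratification of $V$ and how it interacts with the cotangent-bundle construction; one needs, in particular, that the curves $C_\lambda$ generate the Mori cone of $N$, so that nefness checked on all $C_\lambda$ upgrades to nefness of $TN$. Once this geometric dictionary is set up, the equivalence ``no extra semistable pair'' $\iff$ ``no negative $TN$-summand on any $C_\lambda$'' follows by Kempf--Ness on the dual $V^*$-action.
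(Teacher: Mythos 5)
Your first reduction is sound and matches the paper's setup: completeness of the Feix--Kaledin metric means $T^*N$ fills out all of $(T^*V)\trice C$. But from there your argument has genuine gaps. For nefness, the decisive error is the claim that checking the splitting type of $TN|_{C_\lambda}$ on a family of curves generating the Mori cone of $N$ upgrades to nefness of $TN$. Nefness of a vector bundle $E$ is nefness of $\calO(1)$ on $\P(E)$, i.e.\ non-negativity of $\deg Q$ for every quotient line bundle $Q$ of $f^*E$ over every curve $f\colon C\to N$; this is \emph{not} a numerical condition on the class $[C]\in \overline{NE}(N)$, since two curves in the same class can carry restrictions of $E$ with entirely different splitting behaviour. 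So the relevant cone is the Mori cone of $\P(TN)$, which is not controlled by curves lying over generators of $\overline{NE}(N)$. On top of this, the central ``dictionary'' between destabilising one-parameter subgroups and negative summands of $TN|_{C_\lambda}$ --- which you yourself flag as the main obstacle --- is asserted rather than proved, and it is exactly where the content would have to live. The paper's route is entirely different and avoids Mori theory: compactness of $N$ plus completeness gives properness of the $U(1)$-moment map $\psi$ on $Y$ (Proposition \ref{stein}); then Bielawski's argument identifies every K\"ahler reduction $Y\twice^t U(1)$, $t>0$, with $\P(TN)$ via the Hilbert--Mumford criterion, and the Pedersen--Poon evolution equation (\ref{_pedersen_poon_}) together with Ricci-flatness of $Y$ forces $[\omega(t)]=[\omega(t_0)]+(t-t_0)\,c_1(\calO(1))$, exhibiting $c_1(\calO(1))=\lim_{t\to\infty}\frac{1}{t}[\omega(t)]$ as a limit of K\"ahler classes, hence nef.

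The bigness step is also incomplete as written. It is true that bigness of $TN$ amounts to the section ring $\bigoplus_k H^0(N,S^kTN)$ having Krull dimension $2\dim_\C N$, and that this ring is the ring of fibrewise-polynomial functions on $(T^*N)_I$. But you give no reason why completeness of the metric forces this dimension count: in the complex structure $I$ the quotient $(T^*V)\trice C$ contains the compact zero section and is a quasi-projective GIT quotient, so its ring of global functions is the affine quotient at the trivial linearisation, whose dimension is precisely the quantity in question --- asserting it equals $2\dim_\C N$ is asserting bigness. The paper instead uses that $\tilde Y_J$ is affine for $J\ne\pm I$ (Theorem \ref{affine}, which rests on the $J$-semistability of \emph{every} point of $T^*V$, Proposition \ref{j_semistable}), identifies $Y_J$ with the twisted cotangent bundle / canonical extension of $N$ (Corollary \ref{twistedtwist}), and then invokes the theorem of Greb--Wong that affineness of the canonical extension implies bigness of $TN$. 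That last input is substantive and is not recovered by your sketch.
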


\hfill

All the fibers of the twistor space of $(T^*V)\trice C$ except for those over $\pm I$ are isomorphic. This is true for every complete hyperk\"ahler manifold equipped with a HKLR-compatible $U(1)$-action (\cite{HKLR}). In addition, in our case all the fibers are algebraic.
 
\hfill
 
\begin{theorem}
\label{yyy}
Consider $(T^*V)\trice C$ as a complex analytic variety equipped with some complex structure $J\in\H$. Assume that $J\ne \pm I$. Then $(T^*V)\trice C$ admits a structure of an affine algebraic variety. Moreover, with these assumptions on $J$ the isomorphism class of $((T^*V)\trice C)_J$ does not depend on the choice of $J$.
\end{theorem}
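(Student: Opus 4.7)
The plan is to carry out a Kempf--Ness/GIT reduction in complex structure $J$ to obtain an affine algebraic structure on $((T^*V)\trice C)_J$; the independence claim then follows from the HKLR twistor description already recalled before the theorem.

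Fix $J\in\H\setminus\{\pm I\}$. Since $T^*V$ is flat, $(T^*V)_J$ is a complex vector space isomorphic to $\C^{2n}$ (where $n=\dim_{\C}V$), equipped with the constant non-degenerate holomorphic symplectic form $\Omega_J=\omega_K+\1\omega_I$. The action of $C$ on $T^*V$ is $\R$-linear and preserves the whole hyperk\"ahler structure, so it is $\C$-linear with respect to $J$ and symplectic for $\Omega_J$; this yields an embedding $C\hookrightarrow Sp((T^*V)_J,\Omega_J)\cong Sp(2n,\C)$, whose Zariski closure $C^{\C,J}$ is a reductive complex algebraic group acting algebraically and symplectically on $(T^*V)_J$. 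The $J$-holomorphic moment map $\mu_J^{\C}=\mu_K+\1\mu_I\colon(T^*V)_J\to\c^*_{\C}$ is $J$-holomorphic and quadratic in real coordinates, hence polynomial in any $J$-holomorphic linear coordinates; thus $Z_J:=(\mu_J^{\C})^{-1}(0)$ is a closed affine subvariety of $(T^*V)_J$ invariant under $C^{\C,J}$. Since $\mu_I^{-1}(0)\cap\mu_J^{-1}(0)\cap\mu_K^{-1}(0)=Z_J\cap\mu_J^{-1}(0)$, the (stratified) Kempf--Ness correspondence applied to the linear reductive $C^{\C,J}$-action on $Z_J$, with K\"ahler form $\omega_J|_{Z_J}$ and moment map $\mu_J|_{Z_J}$, yields
$$
(T^*V)\trice C\;=\;\bigl(Z_J\cap\mu_J^{-1}(0)\bigr)\big/C\;\cong\;Z_J\,\twice\,C^{\C,J}\;=\;\Spec\bigl(\C[Z_J]^{C^{\C,J}}\bigr),
$$
the affine GIT quotient of $Z_J$ by $C^{\C,J}$, which is an affine algebraic variety.

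For the independence of $J\ne\pm I$, we invoke the HKLR-compatible $U(1)$-action on $(T^*V)\trice C$, which extends to a holomorphic $\C^*$-action on the twistor space $\mathcal{Z}$ covering the standard $\C^*$-action on the twistor line $\P^1$, whose only fixed points are $\pm I$. Because $\C^*$ acts transitively on $\P^1\setminus\{\pm I\}$, its lift to $\mathcal{Z}$ provides biholomorphisms between any two fibers $((T^*V)\trice C)_J\cong\mathcal{Z}_J$ and $((T^*V)\trice C)_{J'}\cong\mathcal{Z}_{J'}$ for $J,J'\in\H\setminus\{\pm I\}$, giving the asserted independence.

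The main difficulty I anticipate is the Kempf--Ness identification in the stratified setting: since $(T^*V)\trice C$ is generally singular, one has to verify carefully the bijection between $C$-orbits meeting $Z_J\cap\mu_J^{-1}(0)$ and the closed (polystable) $C^{\C,J}$-orbits on $Z_J$, together with the identification of the topological quotient with the affine GIT quotient $Z_J\twice C^{\C,J}$. This correspondence for linear reductive Hamiltonian actions on affine varieties is classical (Sjamaar--Lerman and successors), and once in place the remainder of the argument is routine.
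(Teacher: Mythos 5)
Your proposal is correct and follows essentially the same route as the paper: both identify $((T^*V)\trice C)_J$ with the affine good quotient $\mathcal M_J^{-1}(0)/G$ of the zero fibre of the $J$-holomorphic moment map via the Kempf--Ness/Kähler-quotient correspondence, and both deduce independence of $J$ from the fiberwise $\C^\times$-action on $T^*V$ that rotates the complex structures (your twistor formulation is the same map; note it is linear upstairs, hence gives an algebraic and not merely biholomorphic identification). The one point where you defer to a citation what the paper proves by hand is exactly the ``main difficulty'' you flag: the paper's \ref{j_semistable} verifies by an explicit Hilbert--Mumford weight computation that every point of $T^*V$ is $J$-semistable, which is what makes $Z_J^{ss}=Z_J$ and hence $(Z_J\cap\mu_J^{-1}(0))/C\cong \Spec(\C[Z_J]^{G})$; your appeal to the classical affine Kempf--Ness theorem is legitimate precisely because $\mu_J=\Re\calM$ is the homogeneous quadratic moment map with no central shift, so you should state that hypothesis explicitly when invoking it.
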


\hfill

The paper is organised as follows. In Section 2 we review the general properties of K\"ahler, hyperk\"ahler and good quotients and relations between them. The only results in this section which are specifically relevant to our situation are \ref{_hol_symp_ham_}, \ref{_admits_good_quotient_} and \ref{_description_of_unstable_}. They seem to be not explicitly written anywhere. In Section 3 we give a precise formulation of \ref{mmm} and prove it. Section 4 deals with the proof of \ref{yyy}. We also study the moment map $\psi\colon (T^*V)\trice C\to\R_{\ge 0}$ for the $U(1)$-action. We prove that the function $-\psi$ is proper and strictly plurisubharmonic. Thus, we obtain a weaker statement that this hyperk\"ahler quotient is a Stein variety (\cite{_Fornaess_Narasimhan_}). Further, we show that if the Feix--Kaledin metric is complete then the tangent bundle of $V\twice C$ is big and nef. We do this using our previous result on the properness of $\psi$ and an unpublished result due to R.Bielawski. In Appendix we rewrite the Feix construction of the Feix--Kaledin hyperk\"ahler metric in a more canonical language. That allows us to see its relations to twisted cotangent bundles. 

\hfill

{\bf Acknowledgements:} I would like to thank my adviser Misha Verbitsky for suggesting me the problem. I am grateful to Dmitry Kaledin, Roger Bielawski and anonymous referees for reading a draft of this paper and their remarks on its content. I also thank Enrico Arbarello and Ignasi Mundet i Riera for useful remarks on certain parts of the proof. My thanks also go to my friend Renat Abugaliev for his interest and encouraging discussions. I was partially supported by the HSE University Basic Research Program, Russian Academic Excellence Project '5-100' as well as by Independent University of Moscow and Instituto de Matem\'atica Pura e Aplicada.

\section{Preliminaries on Quotients}
\subsection{K\"ahler and hyperk\"ahler reduction}



Let $X$ be a K\"ahler manifold, $\omega\in \Lambda^{1,1}(X)$ a K\"ahler form and $C$ a compact group acting on $X$. Suppose that the action of $C$ preserves the K\"ahler structure. Denote the Lie algebra of $C$ by $\c$. The differential of the $C$-action on $X$ induces a morphism of Lie algebras $L\colon \c\to \Gamma(TX)$ defined as usual by the formula
\begin{equation}
    L(\xi)_x = \frac{d}{dt}\exp(t\xi)x\restrict{t=0}
\end{equation}
for every $x\in M$. For convenience we shall slightly abuse the notation by denoting a vector $\xi \in \c$ and its image $L(\xi)$ in vector fields on $X$ by the same symbol $\xi$.

\hfill

\begin{definition}
\label{_moment_map_def_}
The action of $C$ on a K\"ahler manifold $X$ is called {\em Hamiltonian} if there exists a map $\mu\colon X \to \c^*$ satisfying the following properties:

\begin{itemize}
\item[(i)] For every $\xi\in \c$
\begin{equation}
d\<\mu,\xi\> = \xi\cntrct\omega \in \Omega^1(X,\R)
\end{equation}

\item[(ii)] (C-equivariance) For every $g\in C$
\begin{equation}
\mu(gx) = Ad^*g\cdot\mu(x)
\end{equation}
If $C$ is connected this is equivalent to saying that for every $\xi,\eta\in\c$
\begin{equation}
\omega(\xi, \eta) = -\<\mu,[\xi,\eta]\>
\end{equation}
\end{itemize}

In that case $\mu$ is called a {\em moment map}.
\end{definition}

\hfill

\remark
The definition does not make use of the complex structure on $X$ and can be given for every symplectic manifold $X$ on which a Lie group $C$ acts by symplectomorphisms. However, we shall restrict our attention only to the K\"ahler and later to the hyperk\"ahler case. 

\hfill

\remark A moment map is defined uniquely up to an addition of an $Ad^*$-invariant element of $\c^*$.

\hfill

One can introduce a similar notion in the case when $X$ is a hyperk\"ahler manifold and $C$ acts by preserving the hyperk\"ahler structure. We recall here the definition of a hyperk\"ahler manifold.

\hfill

\begin{definition}
\label{hk}
Suppose $(X,g)$ is a Riemannian manifold equipped with an algebra map $\H\to \End(TX)$ where $\H$ is the quaternion algebra. Suppose also that every complex structure $J\in \H \subset \End(TX)$ is integrable and orthogonal with respect to the metric $g$. For every complex structure $J\in \H$ define
\begin{equation}
    \omega_J(v,u) := g(Jv,u)
\end{equation}
If $\omega_J$ is closed for every $J$ then the manifold $X$ is called {\em hyperk\"ahler}.
\end{definition}

\hfill

For every hyperk\"ahler manifold $X$ we fix a triple of complex structures $(I,J,K)\in \End(TX)$ satisfying quaternionic relations $IJ = -JI = K$.

\hfill

\begin{definition} Let $C$ be a compact connected Lie group acting on a hyperk\"ahler manifold $X$. Suppose that the action of $C$ preserves the hyperk\"ahler structure. The action is called {\em Hamiltonian} if there exists a map $\mu\colon X \to \c^*\otimes \R^3$ 
\begin{equation}
    \mu = (\mu_I,\mu_J,\mu_K)
\end{equation}
such that $\mu_I$, $\mu_J$ and $\mu_K$ are moment maps with respect to K\"ahler forms $\omega_I$, $\omega_J$ and $\omega_K$ respectively (in the sense of \ref{_moment_map_def_}). The map $\mu\colon X\to\c^*\otimes\R^3$ is called a {\em hyperk\"ahler moment map}.
\end{definition}

\hfill

\begin{theorem}(\cite{HKLR}) 
\label{_red_main_theorem_}
Let $X$ be a K\"ahler (resp. hyperk\"ahler) manifold equipped with a Hamiltonian action of a compact Lie group $C$ with a moment map $\mu\colon X\to\c^*$ (resp. $\mu\colon X\to\c^*\otimes\R^3$). Consider the subspace $Z:=\mu^{-1}(0)\subset X$. Then the following holds
\begin{itemize}
    \item [(i)] The subspace $Z$ is $C$-invariant.
    \item [(ii)]Suppose $C$ acts on $Z$ freely. Then zero is a regular value of $\mu$, hence $Z$ is a submanifold of $X$. The quotient $Y := \mu^{-1}(0)/C$ is equipped with a natural structure of a K\"ahler (resp. hyperk\"ahler) manifold. This K\"ahler (resp. hyperk\"ahler) structure is the unique one such that for every $x\in Z$ projecting to a point $y\in Y$ the natural projection map from $\im(\c)^\perp \subset T_x Z$ to $T_y Y$ is an isomorphism of Hermitian (resp. hyper-Hermitian) vector spaces.
\end{itemize}
\end{theorem}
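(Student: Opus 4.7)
The proof plan splits naturally along (i) and (ii). Part (i) is formal: by $C$-equivariance of $\mu$ and linearity of the coadjoint action, $x\in Z$ implies $\mu(gx)=\mathrm{Ad}^{*}g\cdot\mu(x)=0$ for every $g\in C$, so $gx\in Z$. The same argument applies componentwise in the hyperk\"ahler setting to $(\mu_I,\mu_J,\mu_K)$.

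For (ii) I would first show that zero is a regular value under the free-action hypothesis. The identity $d\langle\mu,\xi\rangle=\xi\cntrct\omega$ says that the transpose of $d_x\mu\colon T_xX\to\c^*$ is the map $\c\to T_x^*X$ sending $\xi\mapsto\omega(\xi_x,-)$; non-degeneracy of $\omega$ identifies this (up to sign) with $\xi\mapsto I\xi_x$, which is injective iff the infinitesimal action $L\colon\c\to T_xX$ is. Freeness of the $C$-action supplies the latter, so $d_x\mu$ is surjective, $Z$ is a smooth submanifold by the implicit function theorem, and $Y=Z/C$ is a smooth manifold since the compact group $C$ acts freely and properly. The hyperk\"ahler version is identical, with the three components $\mu_I,\mu_J,\mu_K$ contributing injectivity via $I$, $J$, $K$ in turn, so that $d_x\mu$ surjects onto $\c^*\otimes\R^3$.

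The key geometric observation is that the $g$-orthogonal horizontal subspace $H_x:=\im L(\c)^{\perp_g}\cap T_xZ$ coincides with $(\im L(\c)+I\im L(\c))^{\perp_g}$ in the K\"ahler case, and with $(\im L(\c)+I\im L(\c)+J\im L(\c)+K\im L(\c))^{\perp_g}$ in the hyperk\"ahler case, because the computation above already identified $T_xZ$ as a $g$-perp of $I\im L(\c)$ (resp.\ of $(I+J+K)\im L(\c)$). Since $I$ (resp.\ each of $I,J,K$) is a $g$-isometry, $H_x$ is a complex (resp.\ hyper-Hermitian) subspace of $T_xX$. The projection $d\pi_x\colon H_x\to T_{\pi(x)}Y$ is an isomorphism, transporting the Hermitian (resp.\ hyper-Hermitian) structure to $TY$; uniqueness of the reduced structure with this transport property is built into the statement.

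The remaining and genuinely nontrivial task is to check that the transported data defines a K\"ahler (resp.\ hyperk\"ahler) structure on $Y$. Closure of the descended forms $\omega_L^Y$ is automatic: $\omega_L|_Z$ is $C$-basic (invariant with kernel equal to the orbit tangent directions), so it is the pullback of a closed form from $Y$. The main obstacle is integrability of the induced complex structure(s); the K\"ahler case is classical, while for the hyperk\"ahler case I would follow the HKLR slicing argument: for each $L\in\{I,J,K\}$, writing $\Omega_L=\omega_M+\sqrt{-1}\omega_N$ for the complementary pair $(M,N)$, the map $\mu_M+\sqrt{-1}\mu_N$ is $L$-holomorphic, so its zero-locus is an $L$-complex submanifold of $X$, and a further ordinary K\"ahler reduction by $\mu_L$ realizes $Y$ as a K\"ahler manifold in complex structure $L$. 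Iterating this over $L=I,J,K$ produces three integrable complex structures on $Y$ sharing the single descended metric, with the three descended forms as their K\"ahler forms, and with the quaternion identities $IJ=-JI=K$ inherited pointwise from $H_x$.
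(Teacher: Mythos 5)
You should first note that the paper itself offers no proof of this statement --- it is quoted verbatim from \cite{HKLR} --- so there is no in-paper argument to measure you against; your outline is the standard HKLR route (basic forms descend; $\mu_J+\sqrt{-1}\,\mu_K$ is $I$-holomorphic, so each complex structure on $Y$ arises from an ordinary K\"ahler reduction inside an $I$-complex submanifold; the quaternion relations are inherited pointwise from the horizontal spaces), and most of it is sound.

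There is, however, one genuine gap, located exactly where the hyperk\"ahler case fails to be ``three copies of the K\"ahler case''. You assert that $d_x\mu$ surjects onto $\c^*\otimes\R^3$ because the components $\mu_I,\mu_J,\mu_K$ ``contribute injectivity via $I$, $J$, $K$ in turn''. Surjectivity of each component separately amounts to injectivity of each of the three maps $\xi\mapsto g(I\xi_x,\cdot)$, $\xi\mapsto g(J\xi_x,\cdot)$, $\xi\mapsto g(K\xi_x,\cdot)$ from $\c$ to $T_x^*X$; but this does not make the combined map $(\xi,\eta,\zeta)\mapsto g(I\xi_x+J\eta_x+K\zeta_x,\cdot)$ injective on $\c^{\oplus 3}$, since the three images may intersect. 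Indeed the combined statement is false at a general point of $X$ even for a free action: writing $V=\im L(\c)\subset T_xX$, the subspace $V+IV+JV+KV$ can have dimension smaller than $4\dim\c$ (e.g.\ when the orbit directions span a quaternionic subspace, as for $Sp(1)$ acting on $\H$ by right multiplication). What rescues the argument is that $x$ lies in $\mu^{-1}(0)$: by the infinitesimal equivariance in the paper's Definition of a moment map, $\omega_K(\xi_x,\eta_x)=-\langle\mu_K(x),[\xi,\eta]\rangle=0$ at such $x$, and since $g(I\xi_x,J\eta_x)=-g(\xi_x,K\eta_x)=\omega_K(\xi_x,\eta_x)$, one concludes that $V$, $IV$, $JV$, $KV$ are mutually $g$-orthogonal along $\mu^{-1}(0)$. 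Pairing a putative relation $I\xi_x+J\eta_x+K\zeta_x=0$ with $I\xi_x$ then gives $\|\xi_x\|^2=0$, hence $\xi=\eta=\zeta=0$ by freeness, which is the surjectivity you need. This same orthogonality is also what your dimension count silently uses (it gives $\dim(V+IV+JV+KV)=4\dim\c$, so that $H_x\to T_yY$ is an isomorphism of the right rank), so it must be stated and proved rather than folded into ``identical to the K\"ahler case''. The remainder of your proposal --- descent of the closed forms, $I$-holomorphy of $\mu_J+\sqrt{-1}\,\mu_K$ and the resulting integrability of each descended complex structure --- is the correct and standard argument.
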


\hfill

\begin{definition} The manifold $Y$ from \ref{_red_main_theorem_} is called a {\em K\"ahler (resp. hyperk\"ahler) quotient} of $X$. In the K\"ahler case it is denoted by $X\twice C$ and in the hyperk\"ahler case by $X\trice C$.
\end{definition}



\hfill

\subsection{Singular quotients}
\label{singular_quotients}


In this subsection we state some more or less technical definitions and theorems to be used later. The reader is advised to skip it until needed.


Let $C$ be a compact Lie group acting on a manifold $X$ and $D\subset C$ be its subgroup. For a point $x\in X$ let $\operatorname{Stab}(x)$ be the stabilizer of $x$ in $C$. Following \cite{Sjamaar_Lerman} we introduce the following notation
\begin{gather*}
X^D: = \{x\in X|\: D\subset \operatorname{Stab}(x) \}\\
X_D := \{x\in X|\: D = \operatorname{Stab}(x) \}\\
X_{(D)} := \{x\in X|\: \operatorname{Stab}(x)\text{ is conjugate  to }D \}
\end{gather*}

As $C$ is compact the subset $X^D$ is a submanifold of $X$ containing $X_D$ as a dense open subset. Consider the normalizer $N(D)$ of the subgroup $D$ in $C$ and denote $W(D):= N(D)/D$. Then the group $W(D)$ acts properly and freely on $X_D$ and $X_{(D)}/C = X_D/W(D)$ is a disjoint union of smooth submanifolds.

Now let us return to the case when $X$ is  K\"ahler and the action of $C$ is Hamiltonian with a moment map $\mu\colon X\to\c^*$. 

\hfill

\begin{theorem}
\label{Sjamaar_Lerman}
(\cite{Sjamaar_Lerman}) The subsets $X_D$ are locally closed K\"ahler submanifolds of $M$. Moreover, the restriction of the moment map $\mu\colon X\to \c^*$ to $X_D$ induces the moment map $\mu_D\colon X_D\to \mathfrak w_D^*$ where $\mathfrak w_D$ is the Lie algebra of $W(D)$. Zero is a regular value of $\mu_D$ and $W(D)$ acts freely on $X_D\cap Z$. As a result the quotient $Y$ of $Z = \mu^{-1}(0)$ by $C$ decomposes as a union of K\"ahler manifolds $Y_{(D)} := (X_{(D)}\cap Z)/C = (X_D\cap Z)/W(D)$.
\end{theorem}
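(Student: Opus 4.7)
The plan is to reduce every assertion to the linear equivariant model furnished by the Marle--Guillemin--Sternberg symplectic slice theorem, combined with the observation that near a point of $X_D$ the K\"ahler structure is compatible with the $D$-decomposition. First I pick $x\in X_D$ and decompose $T_xX$ into $D$-isotypic summands. Since $D$ acts by transformations preserving both the complex structure and the Riemannian metric, each isotypic summand is a complex subspace, and in particular $(T_xX)^D$ is a complex symplectic subspace of $T_xX$. An equivariant K\"ahler linearisation (for instance via the geodesic exponential of the $C$-invariant metric, which intertwines the $D$-actions on $X$ and on $T_xX$) shows that a $D$-invariant neighbourhood of $x$ in $X$ is $D$-equivariantly biholomorphic and isometric to a neighbourhood of $0$ in $T_xX$; restricting to $D$-fixed points identifies a neighbourhood of $x$ in $X^D$ with $(T_xX)^D$ as a K\"ahler manifold. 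Hence $X^D$ is a K\"ahler submanifold, and its open subset $X_D$---characterised inside $X^D$ by the absence of any strictly larger stabiliser---inherits this K\"ahler structure.

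For the moment map I would argue as follows. The $C$-equivariance of $\mu$ together with the fact that $D$ acts trivially on $X^D$ forces $\mu(X^D)\subset(\c^*)^D$. Moreover, if $\eta\in\mathfrak d$ then the associated vector field vanishes on $X^D$, so $d\langle\mu,\eta\rangle=\eta\cntrct\omega$ annihilates every direction tangent to $X^D$, i.e.\ $\langle\mu,\eta\rangle$ is locally constant on $X^D$. Consequently $\mu|_{X^D}$, projected modulo $\mathfrak d^*$, descends to a well-defined $W(D)$-equivariant map $\mu_D\colon X_D\to\mathfrak w_D^*$. Testing it against $d\langle\mu,\xi\rangle=\xi\cntrct\omega$ for $\xi\in\mathfrak n(D)$ and directions tangent to $X_D$ shows $\mu_D$ is exactly the moment map for the residual $W(D)$-action on $X_D$.

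Freeness of $W(D)$ on $X_D$ is then formal: if $nD\in W(D)$ fixes $x\in X_D$ then $nx=x$, so $n\in\operatorname{Stab}(x)=D$ and the class $nD$ is trivial. The standard duality, under $\omega$, between the injective infinitesimal action $\mathfrak w_D\to T_xX_D$ and the differential $d\mu_D(x)$ then yields surjectivity of $d\mu_D(x)$, so zero is a regular value on $X_D\cap Z$, and \ref{_red_main_theorem_} applied to $X_D$ endows $(X_D\cap Z)/W(D)$ with a K\"ahler structure. To identify this with $Y_{(D)}=(X_{(D)}\cap Z)/C$, I would observe that every $C$-orbit in $X_{(D)}$ meets $X_D$ in a single $N(D)$-orbit; this produces a bijection $X_{(D)}/C\cong X_D/N(D)=X_D/W(D)$ which is smooth in charts coming from the slice model and carries one K\"ahler structure to the other.

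The main obstacle is the very first step: upgrading the purely symplectic slice theorem to one that preserves the complex structure and the metric simultaneously, so that $X^D$ acquires a genuine K\"ahler---not merely symplectic---structure. Once this equivariant K\"ahler linearisation is in place, all remaining assertions follow by formal manipulations of the defining identity of the moment map and by standard facts about free proper actions of compact Lie groups.
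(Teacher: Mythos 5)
The paper offers no proof of this statement --- it is quoted directly from \cite{Sjamaar_Lerman} --- so your attempt can only be measured against the standard argument, whose skeleton (fixed-point submanifolds, induced $W(D)$-moment map, freeness of $W(D)$, reduction, and the matching of $(X_{(D)}\cap Z)/C$ with $(X_D\cap Z)/W(D)$) your proposal reproduces faithfully. The genuine problem is your first step, which you correctly sense is the crux but then justify with a false claim: the geodesic exponential of a $C$-invariant K\"ahler metric is indeed $D$-equivariant, but it is neither holomorphic nor an isometry onto its image, and no map with all three properties can exist in general --- a $D$-equivariant biholomorphism of a neighbourhood of $x$ onto a neighbourhood of $0$ in $T_xX$ that is simultaneously an isometry would force the metric to be flat near $x$, and your conclusion that a neighbourhood of $x$ in $X^D$ is ``identified with $(T_xX)^D$ as a K\"ahler manifold'' would make every stratum flat. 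The repair is standard and needs much less than the linearisation you ask for: equivariance of $\exp_x$ alone, as a smooth map, gives that locally $X^D=\exp_x\bigl((T_xX)^D\bigr)$, hence $X^D$ is a submanifold with $T_yX^D=(T_yX)^D$ at every fixed point $y$; these subspaces are $J$-invariant because $D$ acts holomorphically; and a submanifold all of whose tangent spaces are $J$-invariant is a complex submanifold of $X$, so the restriction of $\omega$ makes it K\"ahler. (Alternatively, Bochner linearisation provides a $D$-equivariant \emph{biholomorphism} --- but never an isometry --- and in either approach the K\"ahler structure on $X^D$ is the restricted one, not a flat model.)

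There is a second, smaller gap at the point where you invoke \ref{_red_main_theorem_}: applied to $(X_D,\mu_D,W(D))$ it produces a K\"ahler structure on $\mu_D^{-1}(0)/W(D)$, whereas the stratum you need is $(X_D\cap Z)/W(D)$, and a priori $\mu_D^{-1}(0)$ is strictly larger than $X_D\cap Z$, since vanishing of the $\mathfrak w_D^*$-component does not force the whole covector $\mu(y)\in\c^*$ to vanish. This is exactly where the inclusion $\mu(X^D)\subset(\c^*)^D$, which you state but never use, comes in: identifying $\c^*\cong\c$ by an invariant inner product, $(\c^*)^D$ corresponds to the fixed subalgebra $\c^D\subset\mathfrak n(D)$, so for $y\in X_D$ the covector $\mu(y)$ is determined by its restriction to $\mathfrak n(D)$; combined with the local constancy of $\langle\mu,\eta\rangle$ for $\eta\in\mathfrak d$ (which you did prove), this shows that on every connected component of $X_D$ meeting $Z$ one has $\mu_D^{-1}(0)=X_D\cap Z$, so the reduction lands on the right set. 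With these two repairs your outline becomes a complete proof along the same lines as the one in \cite{Sjamaar_Lerman}.
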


\hfill

\remark In fact Sjamaar and Lerman in \cite{Sjamaar_Lerman} prove a stronger statement, in particular, they show that the decomposition $Y = \bigcup Y_{(D)}$ is a stratification.

\hfill

A similar theorem holds in the hyperk\"ahler case.

\hfill

\begin{theorem}
\label{Dancer_Swann}
(\cite{Kaledin_thesis},\cite{Dancer_Swann}) Let $X$ be a hyperk\"ahler manifold with a Hamiltonian action of a group $C$. Let $\mu\colon X\to\c^*\otimes\R^3$ be a moment map. Then the hyperk\"ahler quotient $Y = X\trice C$ decomposes as a union of hyperkähler manifolds $Y_{(D)} := (X_{(D)}\cap Z)/C$.
\end{theorem}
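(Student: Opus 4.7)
The plan is to reproduce the strategy of Sjamaar and Lerman in the Kähler case (\ref{Sjamaar_Lerman}), with the same constructions applied in parallel to the three Kähler forms $\omega_I$, $\omega_J$, $\omega_K$. The key simplification is that every subgroup $D\subseteq C$, together with its normalizer, acts by tri-holomorphic isometries, so compatibility with one symplectic structure automatically gives compatibility with all three.

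First I would show that $X^D$ is a hyperkähler submanifold of $X$. Since $D$ acts by isometries its fixed-point set is totally geodesic, and since $D$ acts by tri-holomorphic maps, $X^D$ is invariant under each of $I$, $J$, $K$; a totally geodesic complex submanifold of a Kähler manifold inherits a Kähler structure, and applying this to each of the three complex structures equips $X^D$ with a hyperkähler structure. The subgroup $N(D)\subset C$ preserves $X^D$ with $D$ acting trivially, hence $W(D) = N(D)/D$ acts on $X^D$ by tri-holomorphic isometries; by definition of $X_D$, this action is free on the open subset $X_D\subset X^D$.

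Next, I would produce a hyperkähler moment map $\mu_D\colon X^D\to\mathfrak{w}(D)^*\otimes\R^3$ for the residual $W(D)$-action. By $C$-equivariance of $\mu$, for $x\in X^D$ one has $\mu(x)\in((\c^*)^D)\otimes\R^3$; on the components of $X^D$ meeting $\mu^{-1}(0)$ the value $\mu(x)$ annihilates $\mathfrak{d}\subset\c$ (because $d\langle\mu,\xi\rangle = \xi\cntrct\omega$ vanishes on $X^D$ for $\xi\in\mathfrak{d}$, and the value at one zero of $\mu$ pins down the constant), so $\mu|_{X^D}$ factors through $\mathfrak{w}(D)^*\otimes\R^3$. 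The moment map identities in each of the three symplectic forms, restricted to $X^D$ and applied to lifts $\xi\in\mathfrak{n}(\mathfrak{d})$ of elements of $\mathfrak{w}(D)$, then verify that $\mu_D$ is a hyperkähler moment map. The essential identity $X_D\cap\mu^{-1}(0) = X_D\cap\mu_D^{-1}(0)$ is established exactly as in \ref{Sjamaar_Lerman}: one inclusion is tautological, and the other uses that the stabilizer at a point of $X_D$ is precisely $D$, which via the slice theorem forces the component of $\mu(x)$ transverse to $\mathfrak{w}(D)^*$ to vanish.

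Finally, I would invoke the smooth hyperkähler reduction \ref{_red_main_theorem_} for the free Hamiltonian action of $W(D)$ on $X_D$ with moment map $\mu_D$, equipping
\[(X_D\cap\mu_D^{-1}(0))/W(D) = (X_D\cap Z)/W(D) = (X_{(D)}\cap Z)/C = Y_{(D)}\]
with a hyperkähler structure, which is the claim. The main obstacle, as in the Kähler setting, is the matching of level sets; in the hyperkähler case this is handled via the hyperkähler slice theorem, which is the one genuinely new ingredient beyond three parallel applications of Sjamaar--Lerman, and is precisely the content supplied by \cite{Kaledin_thesis} and \cite{Dancer_Swann}.
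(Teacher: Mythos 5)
The paper itself offers no proof of this theorem: it is imported wholesale, with citations to \cite{Kaledin_thesis} and \cite{Dancer_Swann}, exactly as its K\"ahler antecedent \ref{Sjamaar_Lerman} is imported from Sjamaar--Lerman. So your proposal can only be compared with those references, and in outline it does reproduce their strategy faithfully: $X^D$ is a hyperk\"ahler submanifold (invariant under $I$, $J$, $K$ and totally geodesic), the restriction of $\mu$ induces a hyperk\"ahler moment map $\mu_D$ for the free action of $W(D)$ on $X_D$, the level sets of $\mu$ and $\mu_D$ are matched on the components meeting $Z$, and one concludes by the smooth reduction \ref{_red_main_theorem_}. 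All of these steps are correct, and this fixed-point-set route is indeed the one the hyperk\"ahler setting forces on you.

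The one justification that would not survive scrutiny is your appeal, in the matching step and in the closing paragraph, to ``the hyperk\"ahler slice theorem.'' No such theorem is available to invoke: the absence of a hyperk\"ahler analogue of the Marle--Guillemin--Sternberg normal form is precisely the notorious difficulty of this subject --- it is why one works with fixed-point submanifolds in the first place, and why the finer stratification statements (local models, frontier conditions) require the separate work of \cite{Mayrand_stratification}. Fortunately the matching needs neither a slice theorem nor the assumption that the stabilizer is exactly $D$. Write $\mathfrak n := \Lie N(D)$ and fix a $C$-invariant inner product on $\c$, identifying $\c^*\cong\c$ equivariantly. For $x$ in a component of $X^D$ meeting $Z$: equivariance makes $\mu(x)$ an $\mathrm{Ad}(D)$-fixed vector, i.e.\ $\mu(x)$ lies in $\c^D\otimes\R^3$, and $\c^D$ is the Lie algebra of the centralizer $Z_C(D)\subset N(D)$, so $\mu(x)\in\mathfrak n\otimes\R^3$; your local-constancy observation gives $\langle\mu(x),\mathfrak d\rangle = 0$; and $\mu_D(x)=0$ says exactly that $\mu(x)$ annihilates $\mathfrak n$. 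Hence $\mu(x)\in(\mathfrak n\cap\mathfrak n^\perp)\otimes\R^3 = 0$, which is the desired inclusion $X_D\cap\mu_D^{-1}(0)\subset X_D\cap\mu^{-1}(0)$. A smaller gloss of the same kind: the surjectivity of $d\mu_D$ along $X_D\cap\mu_D^{-1}(0)$ is \emph{not} ``three parallel applications'' of the symplectic identity $\im d\mu = \mathrm{Ann}(\c_x)$; in the hyperk\"ahler case joint surjectivity of the three components holds only at zeros of the moment map, where the orbit directions and their $I$-, $J$-, $K$-images are mutually orthogonal. Since you take this from \ref{_red_main_theorem_} as a black box, your argument is not damaged, but that is where the genuinely quaternionic input sits --- not in any slice theorem.
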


\hfill

\remark
The decomposition of the hyperk\"ahler quotient $X\trice C$ as a union of hyperk\"ahler manifolds endows it with a structure of a stratified hyperk\"ahler space. We refer to \cite{Mayrand_stratification} for definitions and proofs.

\hfill


\subsection{Algebraic and analytic quotients}



The procedure of taking quotients by non compact groups may sometimes be a subtle question. Na\"{i}ve topological quotients might look too ugly to work with. There exists a definition of a good quotient which mimics some standard properties of topological quotients and works well in the category of algebraic varieties over $\C$ or the category of complex analytic varieties. Either of the two mentioned categories will be denoted by $\mathcal C$.

\hfill

\begin{definition} (\cite{Hoskins_lecture_notes}, {\sf Section 3.5}; \cite{_Greb_}, {\sf Section 2.2 \& 2.3})
\label{_good_quot_def_}
Let $X$ be an object of $\mathcal C$ equipped with an action of a complex group $G \in Ob(\mathcal C)$. We call a $G$-invariant surjective map $p\colon X\to Y$ a {\em good quotient} map if it satisfies the following properties:

\begin{itemize}
\item[(i)] The map $p$ is affine (resp. locally Stein) i.e. the preimage of every affine (resp. Stein) open subset of $Y$ is affine (resp. Stein).
\item[(ii)] The natural map $\calO_Y \to (p_*\calO_X)^G$ is an isomorphism of sheaves.
\item[(iii)] $p$ maps $G$-invariant closed subvarieties to closed subvarieties. 
\item[(iv)] $p$ maps disjoint $G$-invariant closed subvarieties to disjoint subsets.
\end{itemize}
\end{definition}

\hfill

\remark Good quotients are often called {\em Hilbert quotients} especially in the analytic context.

\hfill

\remark Good quotients are always categorical quotients in the following sense. Every $G$-invariant map from $X$ to some $Y'\in Ob(\mathcal C)$ can be uniquely factorized through the projection $p$ from $X$ to the good quotient $Y$ (\cite{Hoskins_lecture_notes}, Prop. 3.30).

\hfill

We shall denote the good quotient of $X$ by $G$ by the symbol $X/G$. This is not the notation commonly used in the literature but it seems that it will enable us to avoid confusion with K\"ahler quotients.

The only thing we shall need to know about the existence of good quotients is the following.

\hfill

\begin{proposition} (\cite{Mumford_Fogarty_Kirwan}, {\sf Ch. 1, \S 2, Thm. 1.1}; \cite{Hoskins_lecture_notes}, {\sf Thm. 4.30}; \cite{Greb_Miebach_Stein})
\label{_existence_of_quotient_affine_}
Let $X\in Ob(\mathcal C)$ be a complex variety equipped with an action of a complex reductive group $G$.
\begin{itemize}
    \item[(i)] Suppose that $X$ is affine (resp. Stein). Then $X$ admits a good quotient by $G$.
    \item[(ii)] Suppose that $X$ admits an affine (resp. locally Stein) $G$-equivariant map to a $G$-variety $M$. Suppose that $M$ admits a good quotient $N$. Then $X$ admits a good quotient $Y$ and $Y$ is affine (resp. locally Stein) over $N$.
\end{itemize}
\end{proposition}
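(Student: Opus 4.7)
The strategy is to reduce (ii) to (i) by a covering argument and to establish (i) through classical invariant theory in the algebraic setting and through Heinzner's theory of analytic Hilbert quotients in the Stein setting. The two cases are formally parallel: both rely on the existence of a ``Reynolds operator" producing invariants, plus a local-to-global gluing principle built from the categorical property of good quotients.

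\textbf{Step 1: the affine case of (i).} Write $X=\Spec A$ with $G$ acting rationally on $A$. By the Hilbert--Nagata theorem for linearly reductive $G$, the ring $A^G$ is finitely generated, so $Y:=\Spec A^G$ is an affine variety and the inclusion $A^G\hookrightarrow A$ produces an affine $G$-invariant map $p\colon X\to Y$. The four conditions of \ref{_good_quot_def_} are verified as follows. Condition (i) is by construction; condition (ii) holds because $(p_*\calO_X)^G$ corresponds globally to $A^G=\calO_Y(Y)$, and this identification sheafifies. For (iii) and (iv) one uses the Reynolds operator $R\colon A\to A^G$: if $I\subset A$ is a $G$-invariant ideal defining a closed $G$-invariant $Z\subset X$, then $R(I)=I\cap A^G$ cuts out a closed set in $Y$ that coincides with $p(Z)$; if $I,J$ are $G$-invariant ideals with $I+J=A$ then $R$ applied to a partition of unity $1=f+g$ shows $I\cap A^G$ and $J\cap A^G$ are comaximal in $A^G$, giving disjoint images.

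\textbf{Step 2: the Stein case of (i).} Here $A^G$ need not be finitely generated, so one cannot directly imitate $\Spec$. Instead, following Heinzner (and Snow for the smooth case), one constructs $Y$ as the topological quotient identifying two $G$-orbits whenever their closures meet, and then endows $Y$ with the sheaf $(p_*\calO_X)^G$ of $G$-invariant holomorphic functions. Local Steinness of $p$ and coherence of the quotient sheaf are obtained by exhibiting, around any closed orbit $Gx$, a $G$-stable Stein neighbourhood on which a slice theorem or Heinzner's exhaustion by strictly plurisubharmonic $G$-invariant functions yields a holomorphic separation of orbits. The Reynolds operator (now in its analytic incarnation via averaging on the maximal compact $C\subset G$) again guarantees separation of disjoint closed $G$-invariant subsets and closedness of images, so all four conditions in \ref{_good_quot_def_} can be checked orbit-by-orbit.

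\textbf{Step 3: reducing (ii) to (i).} Choose an open cover $\{U_\alpha\}$ of $N$ by affine (resp. Stein) subsets, and let $\tilde U_\alpha\subset M$ be the preimages, which are $G$-invariant and affine/Stein because $M\to N$ is a good quotient. By hypothesis the pullback $V_\alpha\subset X$ is $G$-invariant and affine (resp. Stein) over $\tilde U_\alpha$, hence over a point is affine/Stein. Apply (i) to each $V_\alpha$ to obtain a good quotient $W_\alpha$. On overlaps $V_\alpha\cap V_\beta$, the two induced good quotients agree by the universal (categorical) property noted after \ref{_good_quot_def_}, so the $W_\alpha$ glue along canonical isomorphisms to a global space $Y$, equipped with a $G$-invariant map $p\colon X\to Y$ and a map $Y\to N$. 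The four good-quotient conditions are local over $N$, so they transfer from the $W_\alpha$ to $Y$; affineness (resp. local Steinness) of $Y\to N$ is immediate from the construction.

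\textbf{Main obstacle.} The genuinely nontrivial input is the Stein case of Step 2: the absence of finite generation of $A^G$ forces one to replace algebraic constructions by analytic slice theorems and by Heinzner's use of strictly plurisubharmonic exhaustions to control the geometry near closed orbits. Once that is in place, Step 3 is essentially formal, since the categorical character of good quotients makes the gluing canonical and forces all compatibilities automatically.
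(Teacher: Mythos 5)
Your proposal is correct and follows essentially the same route as the paper, which does not prove this cited proposition but only remarks that in the algebraic case (i) is $\Spec(A^G)$ and (ii) reduces to (i) because good quotient maps are affine --- precisely your Steps 1 and 3, with the Stein case deferred (as the paper also does, via its citations) to Heinzner's theory of analytic Hilbert quotients. The extra details you supply (Reynolds-operator separation, gluing of the local quotients $W_\alpha$ over a cover of $N$ using the categorical uniqueness of good quotients) are the standard way to make that remark precise.
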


\hfill

\remark The proof in the algebraic case would follow from the following fact. If $X = \mathrm{Spec}(A)$ where $A$ a $\C$-algebra of finite type then the good quotient $X/G$ is isomorphic to $\mathrm{Spec}(A^G)$. The second assertion follows from the first as good quotients are always affine.

\hfill

The next theorem relates quotients in algebraic and analytic categories.

\hfill

\begin{theorem} (\cite{_Luna_},\cite{Heinzner_Huckleberry}) Let $X$ be an algebraic variety over $\C$ and $G$ a reductive group acting on $X$ algebraically. Assume that there exists a good quotient map $p\colon X\to Y$ to an algebraic variety $Y$. Then $p\colon X^{an}\to Y^{an}$ is a good quotient map in the category of complex analytic varieties.
\end{theorem}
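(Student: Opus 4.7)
My plan is to verify each of the four conditions in \ref{_good_quot_def_} for the analytification $p\colon X^{an}\to Y^{an}$. Since all four conditions are local on $Y$, I would first cover $Y$ by affine open subsets $V$ and reduce to the case where $p^{-1}(V)\to V$ is an algebraic good quotient of affines, i.e.\ $X=\Spec(A)$, $Y=\Spec(A^G)$ with $A$ a finitely generated $\C$-algebra and $G$ reductive.

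The principal tool I would invoke is Luna's \'etale slice theorem. For each closed orbit $G\cdot x \subset X$ with reductive stabilizer $H:=\operatorname{Stab}(x)$, it produces an $H$-stable slice $S\ni x$ and a $G$-equivariant \'etale morphism $G\times_H S\to X$ saturated over its image in $Y$, such that $S\to S\twice H$ models $p$ \'etale-locally. Passing to analytifications, this exhibits a neighbourhood of $p(x)$ in $Y^{an}$ as the analytic quotient of an open subset of a linear $H$-representation, a setting in which algebraic and analytic invariant theories are known to coincide.

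With this reduction, I would verify the conditions as follows. Condition (i) holds because the analytification of an affine morphism between affine varieties sends Stein opens to Stein opens (by Remmert). Conditions (iii) and (iv) reduce via the slice theorem to the well-understood case of $H$-invariant closed analytic subsets of a linear $H$-representation: each fiber of $p$ contains a unique closed $G$-orbit, and $K$-averaging for the maximal compact $K\subset H$ handles the separation. Two disjoint $G$-invariant closed analytic subsets of $X^{an}$ can be separated by a $G$-invariant holomorphic function obtained by averaging a separating holomorphic function (which exists since $X^{an}$ is Stein) and then descending via (ii).

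The hardest step is condition (ii), the sheaf equality $\calO_{Y^{an}}=(p_*\calO_{X^{an}})^G$. The main obstacle is that analytic $G$-invariant holomorphic functions on $X^{an}$ form a strictly larger class than algebraic ones, so the algebraic statement $\calO_Y=(p_*\calO_X)^G$ does not transfer by GAGA alone. My plan is the following: given a $G$-invariant holomorphic function on $p^{-1}(V)$, it descends set-theoretically to $V$ because each fiber of $p$ carries a unique closed $G$-orbit on which any invariant must be constant; the holomorphicity of the resulting function on $V$ is then checked in the Luna slice model, where it reduces to the classical fact that holomorphic $H$-invariants on a linear reductive representation are pulled back from the categorical quotient. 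This last step is the technical heart of the theorem and is precisely where one uses either Luna's slice theorem together with Heinzner's analytic Hilbert quotient machinery, or a more direct structural argument via averaging and analytic continuation along $G=K^\C$.
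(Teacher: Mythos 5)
The paper does not prove this statement at all: it is quoted from Luna and from Heinzner--Huckleberry, so there is no in-paper argument to compare yours against. What you have written is a reasonable roadmap through those references rather than a self-contained proof, and it is worth separating the steps you actually establish from the ones you merely name. The reductions are sound: all four conditions of \ref{_good_quot_def_} can be checked over an affine cover of $Y$ (for condition (i) one should also invoke the classical fact that a map whose preimages of a basis of Stein opens are Stein has Stein preimages of all Stein opens), and once $X=\mathrm{Spec}(A)$ is affine the preimage of a Stein open $V$ is a closed analytic subset of $V\times\C^n$, hence Stein. The Luna slice reduction to a linear $H$-representation is likewise the correct strategy.

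The gap is that the two statements carrying all the analytic content are asserted rather than proved. First, the ``classical fact'' that $H$-invariant holomorphic functions on a linear representation of a reductive group factor holomorphically through the polynomial invariants is essentially the theorem being proved in the linear case; deferring it to ``Heinzner's analytic Hilbert quotient machinery'' makes your treatment of condition (ii) circular. An independent argument (via the Kempf--Ness set and the properness of the moment-map retraction onto it) is what the cited sources actually supply. Second, condition (iii) --- that $p^{an}$ sends closed $G$-invariant analytic sets to closed sets --- does not follow from the separation argument you give for (iv); separation by invariant functions shows disjointness of images but not closedness of a single image, and in Heinzner--Huckleberry this again rests on the properness of $p$ restricted to the Kempf--Ness set. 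Finally, averaging over the maximal compact $K$ yields only a $K$-invariant holomorphic function; upgrading to $G$-invariance requires the identity principle applied to $g\mapsto f(gx)$ on $G=K^{\C}$ (using that $K$ is a maximal totally real submanifold of $G$), which you mention only in passing. None of these points is wrong in direction, but each is exactly where the theorem lives, so as written the proposal is an outline of the proof in the cited sources rather than a proof.
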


\hfill

\begin{definition}
A good quotient $p\colon X\to X/G=:Y$ is called a {\em geometric quotient} if the preimage of any point of $Y$ is a closed orbit in $X$.
\end{definition}

\hfill

Equivalently, a good quotient is geometric if every $G$-orbit in $X$ is closed (\cite{Hoskins_lecture_notes}, {\sf Cor. 3.32}). Though not all good quotients are geometric, there is an important case when they are.

\hfill

\begin{proposition}
\label{_geometric_quot_}
Let $G$ be a reductive group acting on $X$ with finite stabilizers. If $X$ admits a good quotient by $G$ then this quotient is geometric.
\end{proposition}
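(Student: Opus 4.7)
The plan is to show that under the finite-stabilizer hypothesis every $G$-orbit in $X$ is closed, which by the remark following Definition~2.3.4 (characterization of geometric quotients) implies that the good quotient $p\colon X\to Y$ is geometric.

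The key numerical observation is that since $G$ acts with finite stabilizers, every orbit $G\cdot x$ satisfies
\begin{equation*}
\dim(G\cdot x) \;=\; \dim G - \dim\operatorname{Stab}(x) \;=\; \dim G.
\end{equation*}
So all orbits have the same (maximal) dimension $\dim G$. First I would recall the standard fact that an orbit of an algebraic (resp.\ holomorphic) action of an algebraic (resp.\ complex Lie) group is locally closed in $X$: in the algebraic category this follows from Chevalley's theorem, since $G\cdot x$ is the image of the morphism $G\to X$, $g\mapsto gx$, and is therefore constructible and contains a dense open subset of its closure; in the analytic category $G\cdot x\cong G/\operatorname{Stab}(x)$ is an immersed complex submanifold and the same conclusion can be extracted from the constant-rank theorem applied to the orbit map. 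In either case, $\overline{G\cdot x}\setminus G\cdot x$ is a $G$-invariant closed subset of strictly smaller dimension than $\overline{G\cdot x}=\dim G$.

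Now suppose, for contradiction, that some orbit $G\cdot x$ is not closed. Then the boundary $\overline{G\cdot x}\setminus G\cdot x$ is nonempty, closed, and $G$-invariant, hence a union of orbits. Pick any $y$ in this boundary; its orbit $G\cdot y$ satisfies $\dim(G\cdot y)\leq \dim(\overline{G\cdot x}\setminus G\cdot x)<\dim G$. But by the numerical observation above, $\dim(G\cdot y)=\dim G$, which is the desired contradiction. Therefore every $G$-orbit is closed.

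It remains to conclude. Since $p\colon X\to Y$ is a good quotient, by property (iv) of Definition~2.3.1 disjoint $G$-invariant closed subsets of $X$ are sent to disjoint subsets of $Y$; applied to two distinct closed orbits $G\cdot x\ne G\cdot x'$, this shows that the fibers of $p$ are exactly the $G$-orbits, i.e.\ the preimage of each point of $Y$ is a single closed orbit. This is precisely the definition of a geometric quotient. The only real subtlety in the argument is the local-closedness of orbits, which is classical in the algebraic setting and in the analytic setting follows because the orbit map $G\to X$ has constant rank $\dim G$ (stabilizers being finite, hence the orbit map is an immersion onto a locally closed submanifold).
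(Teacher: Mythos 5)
Your proof is correct and follows essentially the same route as the paper: all orbits have dimension $\dim G$ because stabilizers are finite, so a non-closed orbit would contain a lower-dimensional orbit in its boundary, a contradiction. The extra details you supply (local closedness of orbits, the separation property of good quotients) only flesh out what the paper leaves implicit.
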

\begin{proof}
The assumption on stabilizers guarantees that all orbits have the same dimension equal to $\dim G$. If there was a non-closed orbit $G\cdot x$ then its closure $\overline{G\cdot x}$ would contain an orbit of smaller dimension.
\end{proof}

\hfill

We finish the subsection with an example of a quotient which will be pivotal in the sequel. The motivation for this example comes from the procedure of a holomorphic symplectic reduction which we shall now describe. Let $X$ be a holomorphic symplectic variety with a holomorphic symplectic form $\Omega\in \Lambda^{2,0}X$. Suppose $X$ is equipped with an action of a complex reductive group $G$ preserving holomorphic symplectic structure. The definition below mimics \ref{_moment_map_def_}.

\hfill

\begin{definition}
\label{hol_symp_moment_map}
The action of $G$ on a holomorphic symplectic variety $X$ is called {\em Hamiltonian} if there exists a map $\calM\colon X\to \g^*$ with the following properties:

\begin{itemize}
\item[(i)]For every $\xi\in \g$
\begin{equation}
d\<\calM,\xi\> = \xi\cntrct\Omega \in \Lambda^{1,0}(X)
\end{equation}

\item[(ii)] (G-equivariance) For every $g\in G$
\begin{equation}
\calM(gx) = Ad^*g\cdot\calM(x)
\end{equation}
\end{itemize}

The map $\calM\colon X\to \g^*$ is called a {\em holomorphic symplectic moment map}.
\end{definition}

\hfill

\remark The moment map $\calM$ is automatically holomorphic.

\hfill

Similarly to \ref{_red_main_theorem_} one can consider a $G$-invariant subvariety $Z := \calM^{-1}(0)\subset X$. If $Z$ admits a good quotient $Y$ then $Y$ inherits the holomorphic symplectic structure from $X$. 

\hfill

\begin{definition}
\label{hol_symp_quot_def}
Suppose $Z$ admits a good quotient. Then we call $Z/G$ a {\em holomorphic symplectic quotient} of $X$ and denote it by $X\twice G$.
\end{definition}

\hfill

Now let $M\in Ob(\mathcal C)$ be a smooth variety and $X$ the total space of its cotangent bundle. Let $G\in Ob(\mathcal C)$ be a complex reductive group acting on $M$. The action of $G$ can be naturally extended to $X$ as
\begin{equation}
\label{g_action_on_cotangent}
g\cdot(x,\alpha) := (gx, (g^{-1})^*\alpha)
\end{equation}
where $x\in M$, $\alpha \in T^*_xM$. The inclusion $\iota$ of $M$ as the zero section and the projection $\pi$ from $X$ to $M$ become $G$-equivariant maps with respect to this action.

The manifold $X$ is equipped with the standard holomorphic symplectic $2$-form $\Omega\in \Lambda^{2,0}(X)$. More precisely, 
\begin{equation}
\label{standard_two_form}
    \Omega = -d\tau
\end{equation}
where $\tau$ is the tautological holomorphic $1$-form on $X$. Recall that $\tau$ is defined as follows
\begin{equation}
\label{tautological_one_form}
    \tau_{(x,\alpha)}(v) := \alpha(\pi_*v)
\end{equation}
for every $v\in T_{(x,\alpha)}X$.

\hfill

\begin{proposition}
\label{_hol_symp_ham_}
The action of $G$ on $X$ preserves $\tau$ and hence $\Omega$. It is Hamiltonian with a moment map $\calM\colon X\to \g^*$ (\ref{hol_symp_moment_map}) 
\begin{equation}
\label{_moment_map_cot_}
\<\calM(x,\alpha),\xi\> = \<\alpha,\xi\>
\end{equation}
where on the right-hand side we consider $\xi$ to be an element of $T_xM$.
\end{proposition}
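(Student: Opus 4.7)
The plan has two parts: first, prove $G$-invariance of the tautological $1$-form $\tau$ (which forces invariance of $\Omega=-d\tau$); second, derive both defining properties of the moment map from Cartan's magic formula together with the definition of $\tau$.

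For the first part, denote by $\Phi_g\colon X\to X$ the map $(x,\alpha)\mapsto(gx,(g^{-1})^*\alpha)$. Since $\pi\colon X\to M$ is $G$-equivariant (i.e.\ $\pi\circ\Phi_g=g\circ\pi$), pushing forward any vector $v\in T_{(x,\alpha)}X$ yields $\pi_*\Phi_{g*}v=g_*\pi_*v$. Unwinding~(\ref{tautological_one_form}) together with the defining property $((g^{-1})^*\alpha)(g_*w)=\alpha(w)$ of the cotangent lift, we compute
\[
(\Phi_g^*\tau)_{(x,\alpha)}(v)=\bigl((g^{-1})^*\alpha\bigr)(g_*\pi_*v)=\alpha(\pi_*v)=\tau_{(x,\alpha)}(v),
\]
so $\Phi_g^*\tau=\tau$, and hence $\Phi_g^*\Omega=\Omega$.

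For the differential property of the moment map, let $\xi\in\g$ and write $\xi_X$, $\xi_M$ for the fundamental vector fields on $X$ and $M$; equivariance of $\pi$ gives $\pi_*\xi_X=\xi_M$. Invariance of $\tau$ and Cartan's formula yield
\[
0=\mathsf L_{\xi_X}\tau=d(\xi_X\cntrct\tau)+\xi_X\cntrct d\tau=d(\xi_X\cntrct\tau)-\xi_X\cntrct\Omega.
\]
At the point $(x,\alpha)$ the function $\xi_X\cntrct\tau$ evaluates to $\alpha(\pi_*\xi_X(x,\alpha))=\alpha(\xi_M(x))=\<\alpha,\xi\>$, which matches the candidate~(\ref{_moment_map_cot_}). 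Hence $d\<\calM,\xi\>=\xi_X\cntrct\Omega$.

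Equivariance I would handle via the standard identity $g_*(\eta_M(x))=(Ad_g\eta)_M(gx)$, obtained by differentiating $g\exp(t\eta)x=\exp(t\,Ad_g\eta)\,gx$ at $t=0$. Applied with $\eta=Ad_{g^{-1}}\xi$, it gives $\xi_M(gx)=g_*\bigl((Ad_{g^{-1}}\xi)_M(x)\bigr)$, and therefore
\[
\<\calM(gx,(g^{-1})^*\alpha),\xi\>=((g^{-1})^*\alpha)(\xi_M(gx))=\alpha\bigl((Ad_{g^{-1}}\xi)_M(x)\bigr)=\<Ad^*g\cdot\calM(x,\alpha),\xi\>.
\]
No individual step poses a real obstacle; the only point one must track carefully throughout is the $(g^{-1})^*$ convention for the cotangent lift, which is precisely what makes the invariance of $\tau$ and the $Ad^*$-equivariance of $\calM$ come out with the correct signs.
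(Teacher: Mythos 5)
Your proposal is correct and follows essentially the same route as the paper: the same pullback computation showing $\Phi_g^*\tau=\tau$ using equivariance of $\pi$, and the same application of Cartan's formula to $\mathsf L_{\xi}\tau=0$ to identify $\<\calM,\xi\>=\xi\cntrct\tau=\<\alpha,\xi\>$. The only difference is that you spell out the $Ad^*$-equivariance via $g_*\bigl((Ad_{g^{-1}}\xi)_M(x)\bigr)=\xi_M(gx)$, which the paper dismisses as ``easily checked''; your verification is correct.
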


\begin{proof}
By definition 
\begin{equation}
\tau_{(x,\alpha)}(v) = \alpha(\pi_*v)
\end{equation}
Hence for every $g\in G$
\begin{gather*}
g^*\tau_{(x,\alpha)}(v) = \tau_{(gx,(g^{-1})^*\alpha)}(g_*v) =\\
= (g^{-1})^*\alpha(\pi_*g_*v) = (g^{-1})^*\alpha(g_*\pi_*v) = \\
=\alpha (\pi_*v) = \tau_{(x,\alpha)}(v)
\end{gather*}
and the $G$-action preserves $\tau$. We obtain that for every $\xi\in \g$
\begin{equation}
0 = \mathsf L_\xi\tau = d(\xi\cntrct\tau) - \xi\cntrct\Omega
\end{equation}
and 
\begin{equation}
\<\calM(x,\alpha),\xi\> := \xi\cntrct\tau = \<\alpha, \xi\>
\end{equation}
is a moment map. The $G$-equivariance of $\calM$ is easily checked.
\end{proof}

\hfill

\begin{proposition}
\label{_admits_good_quotient_}
Suppose that $G$ acts on a smooth variety $M$ with trivial stabilizers and $M$ admits a good (and therefore geometric) quotient $p\colon M\to M/G =: N$. Then:
\begin{itemize}
    \item [(i)] The variety $N$ is smooth.
    \item [(ii)] The subvariety $Z:=\calM^{-1}(0) \subset X:=T^*M$ is smooth and admits a smooth geometric quotient $Y:= Z/G$.
    \item [(iii)] The quotient $Y$ is naturally isomorphic to $T^*N$.
\end{itemize}
\end{proposition}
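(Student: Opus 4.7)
My plan is to establish the three items in order, using the explicit formula \eqref{_moment_map_cot_} together with \ref{_existence_of_quotient_affine_} and \ref{_geometric_quot_} to reduce matters to a linear-algebraic check on the fibers of $\pi\colon T^*M\to M$.

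For (i), freeness of the $G$-action on $M$ together with the fact that $p$ is a geometric quotient makes $p\colon M\to N$ a principal $G$-bundle in $\mathcal{C}$, hence a smooth submersion, so $N$ inherits smoothness from $M$. For the smoothness of $Z$ I would differentiate $\calM$ along the vertical direction of $\pi$: at $(x,\alpha)$ this restriction sends $\alpha'\in T^*_xM$ to $(\xi\mapsto\langle\alpha',L(\xi)_x\rangle)$, i.e.\ it is the transpose of the infinitesimal action map $L_x\colon\g\to T_xM$. Freeness forces $L_x$ injective, hence its transpose surjective, so $0$ is a regular value of $\calM$ and $Z$ is smooth. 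The stabiliser of any $(x,\alpha)\in Z$ sits inside that of $x$ and is therefore trivial.

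To produce the good quotient of $Z$ I invoke \ref{_existence_of_quotient_affine_}(ii) with the $G$-equivariant affine (resp.\ locally Stein) vector-bundle projection $\pi|_Z\colon Z\to M$ and base $M$; this yields a good quotient $Y$ of $Z$, affine (resp.\ locally Stein) over $N$, which by \ref{_geometric_quot_} and triviality of stabilisers is in fact geometric. For (iii) the key observation is that $Z$ is fibrewise the annihilator of $\ker(dp)$: freeness means the fundamental vector fields $L(\xi)_x$ span $\ker(dp_x)$, so the defining condition $\langle\alpha,L(\xi)_x\rangle=0$ picks out exactly those forms factoring through $T_xM/\ker(dp_x)\cong T_{p(x)}N$. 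This yields a natural map $\Phi\colon Z\to T^*N$, $(x,\alpha)\mapsto(p(x),\bar\alpha)$, which is $G$-invariant by \eqref{g_action_on_cotangent} combined with $p\circ g=p$, so descends to $\tilde\Phi\colon Y\to T^*N$. The combined map $(\pi|_Z,\Phi)\colon Z\to M\times_N T^*N$ is an isomorphism, since for any $(y,\beta)\in T^*N$ and any lift $x\in p^{-1}(y)$ the form $\beta\circ dp_x$ is the unique element of $Z_x$ with $\Phi$-image $(y,\beta)$. Thus $\Phi$ exhibits $Z$ as the pullback of the principal $G$-bundle $p$ along $T^*N\to N$, and the universal property of the quotient gives $\tilde\Phi\colon Y\xrightarrow{\sim} T^*N$; smoothness of $Y$ follows.

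The main delicate point is the identification in (iii): one must be careful to distinguish the annihilator description of $Z$ inside $T^*M$ from cotangent vectors on $N$, and then verify that the resulting $G$-invariant map is not merely a bijection of orbits but actually realises $\Phi$ as the principal bundle pulled back from $p$. Once that identification is made explicit, everything else is bookkeeping with the moment map and the general results on good quotients cited above.
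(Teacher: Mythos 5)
Your proof is correct and follows essentially the same route as the paper's: regularity of $0$ for $\calM$ via injectivity of the infinitesimal action, \ref{_existence_of_quotient_affine_} applied to the affine (resp.\ locally Stein) projection $Z\to M$ together with \ref{_geometric_quot_}, and the identification of the fibre $Z_x$ with the annihilator of the orbit directions, i.e.\ with $T^*_{p(x)}N$. Your fibrewise computation of $d\calM$ (as the transpose of $L_x$ on the vertical tangent space) and the pullback description $Z\cong M\times_N T^*N$ are just slightly more explicit versions of the paper's symplectic-orthogonal argument and of its map $f\colon T^*N\to Y$; the substance is identical.
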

\begin{proof} \begin{itemize}
    \item [(i)] Let $x$ be a point of $M$. Choose any smooth complex analytic subvariety $U$ in a neighbourhood of $x$, transversal to $G\cdot x$ and having complementary dimension to this orbit. By choosing $U$ sufficiently small we can guarantee that $U$ is mapped isomorphically onto a neighbourhood of $p(x)$.
    \item [(ii)] For smoothness of $\calM^{-1}(0)$ it is enough to prove that the differential $d\calM\colon T_xX\to \g^*$ is surjective for any point $x\in \calM^{-1}(0)$. We describe the kernel of $d\calM$ to show this.
    \begin{gather*}
    \ker d\calM = \{v\in T_xX\:|\: v \cntrct d\<\calM,\xi\> = 0 \:\forall \xi\in\g\} = \\
    =\{v\in T_xX\:|\: \Omega(\xi,v) = 0 \:\forall \xi\in\g\} = (\im (\g\to T_xX))^{\perp_\Omega}
    \end{gather*}
    As $G$ acts on $M$ with trivial stabilizers, it also must act with trivial stabilizers on $X$. Hence the map $\g\to T_xX$ is injective for any $x\in X$. The dimension count shows $\dim_\C(\ker d\calM) = \dim_\C X - \dim_\C G$. The differential $d\cal M$ is therefore surjective.
    
    The projection map $\pi\colon Z\to M$ is clearly affine (if $M$ is assumed to be an algebraic variety) or locally Stein (if $M$ is assumed to be a complex analytic variety). \ref{_existence_of_quotient_affine_} guarantees that $Z$ admits a good quotient $Y:=Z/G$ as soon as $M$ does. By \ref{_geometric_quot_} the quotient $Y$ is geometric. The proof of the first part of the proposition applied to the quotient $Y = Z/G$ shows that $Y$ is smooth.
    \item[(iii)] The map $f\colon T^*N\to Y$ is defined as follows. Let $y$ be a point of $N$. Choose any $x\in p^{-1}(y)\subset M$. The pullback map $p^*\colon T^*_yN\to T^*_xM$ maps $T^*_yN$ isomorphically onto the space of elements of $T^*_xM$ vanishing on the tangent space in $x$ to the orbit $G\cdot x$. \ref{_hol_symp_ham_} implies that for every $\alpha\in T^*_yN$ the value of $\calM$ at $p^*\alpha$ vanishes. We define $f(\alpha)\in Y=\calM^{-1}(0)/G$ to be the image of $p^*\alpha\in\calM^{-1}(0)$ in $Y$. One can check that the map is defined correctly and is an isomorphism.
    
\end{itemize}

\end{proof}

\hfill

The following proposition is a classical fact about submersions. We shall omit its proof.

\hfill

\begin{proposition}
\label{_forms_on_quot_}
Suppose that $G$ acts on a smooth variety $M$ with trivial stabilizers and $p\colon M\to N$ is the quotient map. For every $r\in\mathbb Z_{> 0}$ let $\Omega_{hor}^{r}\subset \Omega^r M$ be the subsheaf of horizontal $r$-forms i.e. those $r$-forms $\alpha \in \Omega^r M$ such that $\xi\cntrct \alpha = 0$ for every $\xi\in \g$. Then the pullback map on $r$-forms
\begin{equation}
\label{fck}
    \Omega^r N \to (p_*\Omega_{hor}^{r})^G
\end{equation} 
and the pushforward map on vector fields
\begin{equation}
    (p_*(\mathcal TM/\g\cdot\calO_M))^G \to \mathcal TN
\end{equation}
are both isomorphisms of sheaves of $\calO_N$-modules.
\end{proposition}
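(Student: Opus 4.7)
The plan is to reduce both assertions to the local model of a trivial principal $G$-bundle $G\times U\to U$. Since $G$ acts with trivial stabilizers and the geometric quotient $N$ is smooth, the same slice argument already used in the proof of \ref{_admits_good_quotient_}(i) produces, at every $x\in M$, a locally closed submanifold $U\ni x$ transversal to the orbit $G\cdot x$ and of complementary dimension, mapping isomorphically onto a neighbourhood of $p(x)$ in $N$. After possibly shrinking $U$, the multiplication $G\times U\to p^{-1}(p(U))$ is an isomorphism in $\mathcal C$ (in the analytic category this is the inverse function theorem; in the algebraic category one obtains the analogous trivialization \'etale-locally via Luna's slice theorem, which is enough to compare sheaves of $\calO_N$-modules by descent).

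On such a local trivialization $G\times U\to U$, decompose $r$-forms according to the product structure of the cotangent bundle. A form is horizontal precisely when its components involving the $G$-factor vanish, so it is pulled back from $U$; it is additionally $G$-invariant precisely when that form on $U$ is the pullback of a form on the quotient, i.e., on an open subset of $N$. Conversely any form on $U\cong p(U)\subset N$ pulls back to a $G$-invariant horizontal form on $G\times U$. Hence $\Omega^r N\to (p_*\Omega^r_{hor})^G$ is a local isomorphism, and therefore an isomorphism of sheaves.

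For the vector field statement, the infinitesimal action $\g\to T_xM$ is injective at every $x$ by the triviality of stabilizers, and its image coincides with $\ker(dp)_x$, since the fibres of $p$ are the $G$-orbits. Consequently the short exact sequence of sheaves on $M$
\begin{equation}
0\to \g\cdot\calO_M\to \mathcal TM\to p^*\mathcal TN\to 0
\end{equation}
yields $\mathcal TM/\g\cdot\calO_M\cong p^*\mathcal TN$. Applying $G$-invariant pushforward and using $(p_*\calO_M)^G=\calO_N$ (which is part of the definition of a good quotient, \ref{_good_quot_def_}) together with the projection formula, one obtains $(p_*(\mathcal TM/\g\cdot\calO_M))^G\cong \mathcal TN$.

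The only non-formal step in this outline is the existence of the local trivialization of $p$ as a principal $G$-bundle; everything else is then a direct check on $G\times U$. This is the place I would expect to be the main technical point, particularly in the algebraic setting where Zariski-local triviality may fail and one has to rely on the \'etale-local version of the slice theorem and faithfully flat descent for the comparison of sheaves.
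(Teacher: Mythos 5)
Your argument is correct. Note that the paper gives no proof of \ref{_forms_on_quot_} at all: it is dismissed as ``a classical fact about submersions'' and the proof is omitted, so there is no argument in the text to compare yours against. Your reduction to the local model $G\times U\to U$ is the standard way to supply the missing proof, and it fits the paper's framework well: the slice $U$ is exactly the one used in the proof of \ref{_admits_good_quotient_}(i), and since the quotient is geometric (each fibre is a single closed orbit) and stabilizers are trivial, the multiplication map $G\times U\to p^{-1}(p(U))$ is a bijective local isomorphism, hence an isomorphism in the analytic category; injectivity uses precisely that an orbit meets $U$ at most once and that stabilizers are trivial, both of which you have. You also correctly isolate the one genuine technical point: in the algebraic category the trivialization is only \'etale-local (Luna's slice theorem applies because all orbits are closed), after which the comparison of $\Omega^r N$ with $(p_*\Omega_{hor}^{r})^G$, and of $(p_*(\mathcal TM/\g\cdot\calO_M))^G$ with $\mathcal TN$, descends along the faithfully flat cover because $\Omega^r$ and $\mathcal T$ are compatible with \'etale pullback and $p_*$ commutes with flat base change. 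The identification $\mathcal TM/\g\cdot\calO_M\cong p^*\mathcal TN$ via $dp$, the projection formula for the locally free sheaf $\mathcal TN$, and the identity $(p_*\calO_M)^G=\calO_N$ from \ref{_good_quot_def_}(ii) then finish the vector-field statement exactly as you describe.
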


\hfill


\subsection{K\"ahler quotients as good quotients} 


Our goal now is to present a K\"ahler quotient of a K\"ahler manifold $X$ as a geometric or at least good quotient of some open subset of $X$. An interested reader can consult a survey \cite{GRS} for the compact case and a survey \cite{Heinzner_Huckleberry} for the non-compact case. 

\hfill

\begin{definition}
\label{stability_def}
Let $G$ be a complex reductive group acting holomorphically on a K\"ahler manifold $X$. Suppose that the restriction of this action to a fixed maximal compact subgroup $C$ of $G$ preserves the K\"ahler structure and is Hamiltonian with a moment map $\mu\colon X\to \c^*$. 
\begin{itemize}
\item[(i)] A point $x\in X$ is called {\em semistable} if 
\begin{equation}
\overline{Gx}\cap \mu^{-1}(0) \ne \varnothing
\end{equation}
We denote the set of semistable points of $X$ by $X^{ss}$. We shall call a point {\em unstable} if it is not semistable. We shall denote the set of unstable points by $X^{us}$.

\item[(ii)] A point $x\in X$ is called {\em polystable} if 
\begin{equation}
Gx\cap \mu^{-1}(0)\ne \varnothing
\end{equation}
We denote the set of polystable points as $X^{ps}$.

\item [(iii)] A point $x\in X$ is called {\em stable} if it is polystable and the stabilizer of $x$ in $G$ is finite.
We denote the set of stable points as $X^{s}$

\end{itemize}
\end{definition}

\hfill

It follows easily from the definitions that the sets $X^{ss}$, $X^{us}$, $X^{ps}$ and $X^s$ are $G$-invariant.

\hfill

\begin{proposition} (\cite{Heinzner_Huckleberry},\cite{GRS}, {\sf Thm. 7.2}) The subsets $X^s$ and $X^{ss}$ are open in $X$.
\end{proposition}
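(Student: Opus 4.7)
My plan is to deduce both openness statements from a local model for the $G$-action around a point $y_0\in\mu^{-1}(0)$, the K\"ahler slice theorem of Sjamaar and of Heinzner--Loose (see \cite{Heinzner_Huckleberry}, \cite{GRS}).

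\textbf{Local input.} For every $y_0\in\mu^{-1}(0)$ the K\"ahler slice theorem provides a $G$-invariant open neighborhood $\Omega=\Omega_{y_0}\subset X$ that is $G$-equivariantly biholomorphic to a neighborhood of the zero section in the bundle
\begin{equation*}
G\times_{G_{y_0}}\bigl(\c_{y_0}^{*}\oplus W\bigr),
\end{equation*}
where $C_{y_0}\subset C$ is the compact stabilizer of $y_0$, $G_{y_0}$ is its complexification (the $G$-stabilizer of $y_0$, a complex reductive subgroup), $\c_{y_0}$ is the Lie algebra of $C_{y_0}$, and $W\subset T_{y_0}X$ is the symplectic normal slice. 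In this model $\mu$ is explicit and one reads off directly that both $\Omega\cap X^{ss}$ and $\Omega\cap X^s$ are open in $\Omega$: inside the slice, semistability and stability reduce to the standard linear-algebraic open conditions on the finite-dimensional representation of the reductive group $G_{y_0}$.

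\textbf{Openness of $X^{ss}$ and $X^s$.} Let $x_0\in X^{ss}$. Pick $y_0\in\overline{Gx_0}\cap\mu^{-1}(0)$ and $\Omega_{y_0}$ as above. Since $y_0\in\overline{Gx_0}$, there is $g_0\in G$ with $g_0 x_0\in\Omega_{y_0}$. By $G$-invariance of $X^{ss}$ (immediate from \ref{stability_def}), the point $g_0 x_0$ lies in the open subset $\Omega_{y_0}\cap X^{ss}$ of $\Omega_{y_0}$, hence admits an open neighborhood $V\subset X^{ss}$. The set $g_0^{-1}V$ is then an open neighborhood of $x_0$ contained in $X^{ss}$. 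The argument for $X^s$ is verbatim the same, with the improvement that by polystability of $x_0$ one can take $y_0\in Gx_0\cap\mu^{-1}(0)$ from the outset, so that $y_0$ already lies in $X^s\cap\Omega_{y_0}$.

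\textbf{Main obstacle.} The only non-trivial ingredient is the K\"ahler slice theorem itself. When $G_{y_0}$ is trivial it reduces to the implicit function theorem applied to the map $\xi\mapsto\mu(\exp(\sqrt{-1}\xi)x)$ near $(\xi,x)=(0,y_0)$: its Hessian in $\xi$ is positive definite (this is the usual Kempf--Ness convexity of $\|\mu\|^2$ along the imaginary direction). The general, $G_{y_0}$-equivariant version requires the full apparatus of Sjamaar's slice theorem to produce the correct $G_{y_0}$-equivariant normal form; once this local model is in hand, openness follows by the formal $G$-translation argument above.
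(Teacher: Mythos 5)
The paper offers no proof of this proposition --- it is quoted verbatim from \cite{Heinzner_Huckleberry} and \cite{GRS} --- so what you have written is not an alternative to an internal argument but a reconstruction of the cited one. Your overall strategy (K\"ahler slice theorem at a point of $\mu^{-1}(0)$, openness in the linear local model, translation by $G$-invariance of $X^{ss}$) is indeed how those references proceed, and the reduction is sound. One organizational simplification is worth knowing: rather than analyzing $\Omega\cap X^{ss}$ at an arbitrary semistable point, it suffices to produce a $G$-invariant open set $U\supset\mu^{-1}(0)$ with $U\subset X^{ss}$ (the union of your slice neighbourhoods). Then $X^{ss}=U$ automatically: if $\overline{Gx}$ meets $\mu^{-1}(0)\subset U$, the orbit $Gx$ itself meets the open set $U$, hence $x\in U$ by $G$-invariance. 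This removes the need to compare the semistable locus of the local model with the trace of the global one at points far from $\mu^{-1}(0)$.

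Three points in your sketch need repair. First, the local model is dimensionally off: the fibre should involve the annihilator of $\c_{y_0}$ in $\c^*$ (equivalently $(\c/\c_{y_0})^*$), not $\c_{y_0}^*$; as written the model has excess dimension $\dim C_{y_0}$. Second, the phrase ``$\Omega\cap X^{ss}$ is open in $\Omega$'' silently identifies global semistability with semistability of the local model; the inclusion of the model's semistable set into $X^{ss}$ is automatic because $\Omega$ is open and $G$-invariant, but the reverse is not, and your argument really only uses that $g_0x_0$ is semistable \emph{in the model} --- which holds because the witness $y_0\in\overline{Gx_0}\cap\mu^{-1}(0)$ was chosen inside $\Omega$. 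This should be said explicitly. Third, the stable case is not ``verbatim the same'': polystability is \emph{not} an open condition (the paper remarks on this right after \ref{stability_def}), so openness of $X^s$ does not follow formally from openness of $X^{ss}$ together with openness of the finite-stabilizer locus. What saves the argument is that at a point $y_0\in\mu^{-1}(0)$ with finite stabilizer $H=G_{y_0}$ every $G$-orbit in the slice model $G\times_H S$ is closed (its preimage in $G\times S$ is $G\times Hw$ with $Hw$ finite), so semistable points of the model have orbits actually meeting $\mu^{-1}(0)$ and are therefore polystable with finite stabilizer, i.e.\ stable. With these repairs the proof is complete modulo the slice theorem itself, which is the genuinely hard input and is precisely what the cited sources supply.
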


\hfill

\remark
The set $X^{ps}$ of polystable points is not necessarily open. 

\hfill

Now we want to construct a map from $X^{ss}$ to $\mu^{-1}(0)/C$ i.e. to the K\"ahler quotient of $X$. To do this we need the following two results.

\hfill

\begin{lemma} Suppose $x,y\in\mu^{-1}(0)$ lie in the same $G$-orbit. Then they lie in the same $C$-orbit.
\end{lemma}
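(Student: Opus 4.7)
The plan is to use the polar (Cartan) decomposition of the complex reductive group $G$ to reduce the claim to showing that a single one-parameter subgroup $t\mapsto \exp(\sqrt{-1}\,t\xi)$ fixes the point $x$. This is the standard GIT argument; the key identity is that moving along imaginary directions of $\g = \c\oplus\sqrt{-1}\,\c$ increases a certain norm-squared functional on $X$.

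More concretely, write $y=gx$ with $g\in G$. By polar decomposition there exist $c\in C$ and $\xi\in\c$ with $g=c\exp(\sqrt{-1}\,\xi)$. Replacing $y$ by $c^{-1}y$ and using the $C$-equivariance of $\mu$, we may assume $y=\exp(\sqrt{-1}\,\xi)x$ and $\mu(y)=0=\mu(x)$. The goal is then to show $\exp(\sqrt{-1}\,\xi)x=x$.

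Consider the real-valued function
\begin{equation*}
F(t) := \langle\mu(\exp(\sqrt{-1}\,t\xi)x),\xi\rangle,\qquad t\in\R.
\end{equation*}
By construction $F(0)=F(1)=0$. Writing $\xi_X$ for the vector field on $X$ induced by $\xi\in\c$, the one-parameter subgroup $\exp(\sqrt{-1}\,t\xi)$ is by definition the flow of $J\xi_X$, where $J$ is the complex structure on $X$. Using the defining property $d\langle\mu,\xi\rangle=\xi_X\cntrct\omega$ of the moment map together with the K\"ahler identity $\omega(v,Jv)=g(v,v)$, one computes
\begin{equation*}
F'(t) = (J\xi_X\cntrct(\xi_X\cntrct\omega))\bigl(\exp(\sqrt{-1}\,t\xi)x\bigr) = |\xi_X|^2\bigl(\exp(\sqrt{-1}\,t\xi)x\bigr)\geq 0.
\end{equation*}
Since $F$ is nondecreasing and has equal endpoint values on $[0,1]$, it follows that $F'\equiv 0$ on $[0,1]$, so $\xi_X$ vanishes identically along the curve $\gamma(t):=\exp(\sqrt{-1}\,t\xi)x$. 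Because $J$ is an isomorphism of $TX$, the vanishing of $\xi_X$ along $\gamma$ is equivalent to the vanishing of $J\xi_X$ along $\gamma$; but $J\xi_X$ is the tangent vector to $\gamma$, so $\gamma$ is constant. In particular $\exp(\sqrt{-1}\,\xi)x = x$, whence $y=cx$ as desired.

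The only conceptual step requiring care is the identification of the flow of $\sqrt{-1}\,\xi\in\g$ with the flow of $J\xi_X$ on $X$; once this is in place, the rest is the convexity computation above. I do not anticipate a real obstacle, since this is a textbook argument (compare Kempf--Ness); the only thing to verify is that the conventions on moment maps, the sign in $\omega(v,Jv)$, and the polar decomposition match those used earlier in the paper.
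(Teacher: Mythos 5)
Your proof is correct and follows essentially the same route as the paper: polar decomposition $g = c\exp(\sqrt{-1}\xi)$, monotonicity of $t\mapsto\langle\mu(\exp(\sqrt{-1}t\xi)x),\xi\rangle$ via $F'=\|\xi_X\|^2\ge 0$, and the conclusion that the curve is constant. The only cosmetic difference is that the paper deduces $\xi_X=0$ at the initial point and stops there, whereas you observe $F'\equiv 0$ along the whole segment; both suffice.
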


\begin{proof} Let $y = gx$. By the polar decomposition $g = g_0\exp(\sqrt{-1}\xi)$ for some $g_0\in C$ and $\xi\in\c$. Consider the curve $y(t) = \exp(\sqrt{-1}t\xi)g_0 x$ connecting $g_0x$ and $y$, then
\begin{equation}
\frac{d}{dt}\<\mu(y_t),\xi\>\restrict{t=\tau} = \omega(\xi_{y(\tau)},I\xi_{y(\tau)}) = ||\xi_{y(\tau)}||^2\ge 0
\end{equation}
As $\mu(g_0 x) = \mu(y) = 0$ we see that $\xi_{g_0 x} = 0$ and hence $y = g_0 x$.
\end{proof}

\hfill

\begin{lemma} (\cite{Heinzner_Huckleberry}, {\sf Cor. 4.2.2}) Every semistable orbit contains a unique polystable orbit in its closure.
\end{lemma}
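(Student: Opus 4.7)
The plan is to split the statement into existence and uniqueness. \emph{Existence} is almost immediate from the definitions: since $x$ is semistable, $\overline{Gx}\cap \mu^{-1}(0)\ne \varnothing$, and picking any $y$ in this intersection, the orbit $Gy$ lies in $\overline{Gx}$ (since $\overline{Gx}$ is $G$-invariant, as the closure of a single $G$-orbit) and is polystable because $y\in Gy\cap \mu^{-1}(0)$.

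For \emph{uniqueness} I would proceed in two steps. \emph{Step 1: polystable orbits are closed in $X$.} Suppose $y\in\mu^{-1}(0)$ and $z\in\overline{Gy}$, say $z=\lim g_n y$. Writing $g_n = c_n\exp(\1\xi_n)$ via the Cartan decomposition with $c_n\in C$, $\xi_n\in \c$, the compactness of $C$ lets us pass to a subsequence with $c_n\to c_\infty$, so after replacing $z$ by $c_\infty^{-1}z$ we may take $c_n=1$. The computation in the preceding lemma shows that $t\mapsto \<\mu(\exp(\1 t\xi_n)y),\xi_n\>$ is non-decreasing with derivative $\|(\xi_n)_{\exp(\1 t\xi_n)y}\|^2\ge 0$. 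If the $|\xi_n|$ remain bounded, $g_n$ subconverges in $G$ and $z\in Gy$; otherwise a limiting unit direction $\hat\xi=\lim \xi_n/|\xi_n|$ gives a one-parameter subgroup $\exp(\1 t\hat\xi)$ contained in $\operatorname{Stab}(z)$, contradicting the fact that stabilizers of points on orbits meeting $\mu^{-1}(0)$ are reductive (Matsushima--Richardson). Hence $Gy$ is closed.

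\emph{Step 2: closedness plus convexity of Kempf--Ness gives uniqueness.} Let $Gy_1,Gy_2\subset \overline{Gx}$ be two polystable orbits with $y_i\in\mu^{-1}(0)$. Consider the Kempf--Ness functional $\phi_x\colon G/C\to\R$ defined by integrating $\<\mu,\xi\>$ along the geodesics $t\mapsto \exp(\1 t\xi)g\cdot x$ in the non-positively curved symmetric space $G/C$. This function is convex, and strictly convex transverse to the infinitesimal stabilizer of any limit point. By Step~1 each orbit $Gy_i$ is closed, so a sequence $g_n^{(i)}\cdot x\to y_i$ gives a minimizing ray for $\phi_x$; strict convexity forces any two such minimizing rays to coincide up to the $C$-action on $G/C$, so $y_1$ and $y_2$ lie in a common $C$-orbit and $Gy_1=Gy_2$.

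The main obstacle is Step~1, which demands careful control of divergent Cartan exponents in the non-compact setting of this paper (e.g.\ $X=T^*V$). A cleaner alternative I would consider is to first exhibit $G$-invariant Stein neighbourhoods to which the Heinzner--Huckleberry analytic Hilbert quotient applies directly; the separation axiom in the definition of a good quotient (\ref{_good_quot_def_}, item (iv)) then immediately forces two closed $G$-orbits in $\overline{Gx}$ to be mapped to the same point, hence to coincide, reducing uniqueness to the local closedness assertion of Step~1.
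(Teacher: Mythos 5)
The paper offers no proof of this lemma at all: it is quoted directly from Heinzner--Huckleberry (Cor.\ 4.2.2), so there is no internal argument to compare yours against, and your attempt has to be judged on its own. The existence half is correct and immediate from \ref{stability_def}. The uniqueness half, however, contains two genuine gaps. In Step 1, from $\exp(\1\xi_n)y\to z$ with $|\xi_n|\to\infty$ you cannot conclude that the limiting direction $\hat\xi$ generates a one-parameter subgroup fixing $z$: convergence of the endpoints $\exp(\1\xi_n)y$ gives no control over the flow of $\hat\xi$ through $z$, and this implication needs a real argument even for linear actions. Worse, even if $\exp(\1 t\hat\xi)\subset\operatorname{Stab}(z)$ held, it would not contradict reductivity of the stabilizer --- a reductive group contains many one-parameter subgroups of exactly this form (consider a torus fixed point) --- and the Matsushima-type reductivity you invoke applies to points whose orbit meets $\mu^{-1}(0)$, which is precisely what is unknown about $z$ at that stage. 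In Step 2, convexity of the Kempf--Ness functional does not force two minimizing rays to coincide: along a direction $\xi$ with finite weight $w^\xi_\mu(x)$ the functional is merely asymptotically linear (compare \ref{fff}, where the finite value of the weight is the constant $\<\theta,\xi\>$), and a convex function on a Hadamard space can admit many distinct minimizing rays, as a linear function on $\R^2$ already shows. The ``strict convexity transverse to the stabilizer'' is a statement about interior critical points, whereas $y_1$ and $y_2$ sit at infinity of $G/C$ whenever they do not lie on $Gx$ itself.

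The workable route is the one you relegate to a closing remark: the main theorem of \cite{Heinzner_Huckleberry} supplies the analytic Hilbert quotient $\Psi\colon X^{ss}\to\mu^{-1}(0)/C$, polystable orbits are closed in $X^{ss}$, and property (iv) of \ref{_good_quot_def_} (disjoint closed invariant subvarieties have disjoint images) forces $\overline{Gx}\cap X^{ss}$, which maps to a single point, to contain exactly one closed orbit. But that separation property is essentially the content of the cited corollary, so once repaired your argument amounts to re-citing the reference --- which is exactly what the paper itself does --- rather than giving an independent proof.
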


\hfill

Now we can define a map $\Psi\colon X^{ss} \to \mu^{-1}(0)/C$ as follows. For a semistable point $x\in X$ consider the unique polystable orbit in the closure of its $G$-orbit. It intersects $\mu^{-1}(0)$ in a $C$-orbit $\tilde x\in \mu^{-1}(0)/C$. We define
\begin{equation}
\label{psi}
\Psi(x) := \tilde{x}
\end{equation}

\hfill

\begin{proposition}
\label{ss}
(\cite{Heinzner_Huckleberry}, {\sf Thm. 4.2.4}) Let $G$ be a complex reductive group acting holomorphically on a K\"ahler manifold $X$. Suppose that the restriction of this action to a fixed maximal compact subgroup $C$ of $G$ preserves the K\"ahler structure and is Hamiltonian with a moment map $\mu\colon X\to \c^*$. 
\begin{itemize}
    \item [(i)] The map $\Psi\colon X^{ss} \to \mu^{-1}(0)/C$ is a good quotient map. 
    \item [(ii)] Let $\mu^{-1}(0)^s$ be the set of points in $\mu^{-1}(0)$ whose stabilizer in $C$ is finite. Then the map $\Psi\restrict{X^s}\colon X^s\to \mu^{-1}(0)^s/C$ is a geometric quotient.
\end{itemize}
\end{proposition}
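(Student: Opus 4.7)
\proof
The plan is to verify the four axioms of \ref{_good_quot_def_} for the map $\Psi\colon X^{ss}\to\mu^{-1}(0)/C$, and then to deduce (ii) from \ref{_geometric_quot_}. The two preparatory lemmas already guarantee that every $G$-orbit in $X^{ss}$ contains in its closure a unique polystable orbit, and that this polystable orbit meets $\mu^{-1}(0)$ in a single $C$-orbit; hence $\Psi$ is well defined, $G$-invariant and surjective. Continuity of $\Psi$ I would obtain from the observation that the assignment ``unique polystable orbit in the closure'' is realised by the downward gradient flow of the Kempf--Ness functional $\tfrac12\|\mu\|^2$, whose flow lines converge on $X^{ss}$ and provide a deformation retraction of $X^{ss}$ onto $\mu^{-1}(0)$.

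The analytic core is to verify that $\Psi$ is locally Stein (axiom (i)) and satisfies the sheaf condition $\calO_{\mu^{-1}(0)/C}\to(\Psi_*\calO_{X^{ss}})^G$ (axiom (ii)). First I would fix $\tilde x\in\mu^{-1}(0)/C$, pick a polystable representative $y$, and invoke the holomorphic slice theorem for K\"ahler Hamiltonian actions (Heinzner--Loose, Sjamaar) to identify a $G$-saturated open neighbourhood of $G\cdot y$ in $X^{ss}$ with a twisted product $G\times_{C_y} S$, where $S$ is a Stein $C_y$-invariant transverse slice through $y$. This reduces both axioms to the analogous statements for the linear action of the complexification of $C_y$ on $S$, which in turn follow from Hilbert's theorem that the ring of invariants is finitely generated together with the standard fact that invariant holomorphic (resp.\ regular) functions separate disjoint closed orbits. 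Axioms (iii) and (iv) are then immediate from Kempf--Ness: for any $G$-invariant closed $Z\subset X^{ss}$ the flow carries $Z$ into $Z\cap\mu^{-1}(0)$, whose image in the K\"ahler quotient is closed; and two disjoint such subsets produce disjoint polystable orbits in their closures by the uniqueness lemma, so their images are disjoint.

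For (ii), the hypothesis that a polystable $x\in X^s$ has finite stabilizer in $G$ forces all nearby $G$-orbits to have dimension $\dim G$, so no orbit can strictly contain another in its closure; hence every $G$-orbit in $X^s$ is closed, and \ref{_geometric_quot_} upgrades the restriction $\Psi\restrict{X^s}\colon X^s\to\mu^{-1}(0)^s/C$ from a good quotient to a geometric one. The chief obstacle I expect is not the formal bookkeeping but the slice theorem itself: in the K\"ahler (as opposed to algebraic) category it requires a careful construction of a local K\"ahler potential making the slice Stein and equivariantly transverse to $G\cdot y$, and this analytic step is the real technical heart of the whole argument; once it is in hand, the rest of the proof is an orbit-closure chase combined with the gradient flow of $\|\mu\|^2$.
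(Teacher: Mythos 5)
The paper does not prove this statement at all: it is quoted verbatim as Theorem 4.2.4 of Heinzner--Huckleberry's \emph{Analytic Hilbert quotients}, so there is no in-paper argument to compare yours against. Your sketch follows the standard Kempf--Ness strategy (norm-square flow to identify $\Psi$, holomorphic slice theorem to localize the good-quotient axioms, orbit-dimension count for the geometric statement), and that is indeed the right overall shape; the well-definedness of $\Psi$ via the two preparatory lemmas and the deduction of (ii) from \ref{_geometric_quot_} are exactly as the paper sets things up.

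That said, as a proof rather than an outline it has gaps at precisely the points where the cited theorem earns its keep. First, on a general (non-compact, non-algebraic) K\"ahler manifold the convergence of the gradient flow of $\tfrac12\|\mu\|^2$ on $X^{ss}$, and the continuity of the limit map, are not free: they require either properness/completeness hypotheses or Lojasiewicz-type estimates, and this is one reason Heinzner--Huckleberry's actual argument is complex-analytic (built on the slice theorem and local Stein exhaustions) rather than flow-theoretic. Using the flow both to define continuity of $\Psi$ and to verify axioms (iii)--(iv) therefore imports an unproved convergence statement. Second, the holomorphic slice theorem at a polystable point --- which you correctly identify as the technical heart --- is itself a substantial theorem of Heinzner/Heinzner--Loose, so the reduction of axioms (i)--(ii) of \ref{_good_quot_def_} to the linear case is conditional on it. Neither point is a wrong turn, but both are black boxes of essentially the same depth as the proposition itself; given that the paper treats the whole statement as an external citation, your proposal should be read as a correct roadmap through the literature rather than a self-contained proof.
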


\hfill

\begin{corollary}
Assume that zero is a regular value of the moment map. Then all the inclusions $X^s\subset X^{ps}\subset X^{ss}$ are equalities. Moreover, the map $\Psi\colon X^s\to \mu^{-1}(0)/C$ is a geometric quotient.
\end{corollary}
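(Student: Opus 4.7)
The plan is to leverage the assumption that $0$ is a regular value of $\mu$ to force all relevant stabilizers to be finite, and then to argue by a dimension count on orbits that semistability already implies stability. The final geometric-quotient claim will then drop out of \ref{ss}.

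First I would unpack what regularity of $0$ buys us. The kernel of $d\mu_x\colon T_xX\to \c^*$ equals the $\omega$-orthogonal complement of the image of $\c\to T_xX$, just as in the computation appearing in the proof of \ref{_admits_good_quotient_}. Consequently $d\mu_x$ is surjective at every $x\in \mu^{-1}(0)$ if and only if the infinitesimal action $\c\to T_xX$ is injective at every such point, i.e.\ every point of $\mu^{-1}(0)$ has discrete (hence, since $C$ is compact, finite) $C$-stabilizer. Moreover, at any point $y\in\mu^{-1}(0)$ the full $G$-stabilizer coincides with the complexification of the $C$-stabilizer (this is the standard fact used in defining polystability), so the $G$-stabilizer of any $y\in\mu^{-1}(0)$ is also finite.

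Next I would show $X^{ss}=X^{ps}=X^s$. The only nontrivial containment is $X^{ss}\subset X^s$. Let $x\in X^{ss}$. By the lemma preceding \ref{ss}, the closure $\overline{Gx}$ contains a unique polystable orbit $Gy$, and by picking a representative we may assume $y\in\mu^{-1}(0)$. The previous paragraph tells us that $\operatorname{Stab}_G(y)$ is finite, so $\dim Gy=\dim G$. Since orbit boundaries consist of orbits of strictly smaller dimension,
\begin{equation}
\dim G \;=\; \dim Gy \;\leq\; \dim Gx \;\leq\; \dim G,
\end{equation}
which forces $Gy=Gx$; otherwise $y$ would lie in the boundary of $Gx$, contradicting $\dim Gy=\dim Gx$. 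Thus $x$ is itself polystable, and because $\operatorname{Stab}_G(x)$ is conjugate to the finite group $\operatorname{Stab}_G(y)$, $x$ is actually stable. The chain of inclusions is therefore a chain of equalities.

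Finally, to conclude the geometric quotient statement, I would invoke part (ii) of \ref{ss} directly. That statement gives that $\Psi\restrict{X^s}\colon X^s\to \mu^{-1}(0)^s/C$ is a geometric quotient, where $\mu^{-1}(0)^s$ denotes the points of $\mu^{-1}(0)$ with finite $C$-stabilizer. Under our regularity hypothesis the first paragraph shows $\mu^{-1}(0)^s=\mu^{-1}(0)$, and by the second paragraph $X^s=X^{ss}$, so the map in question is exactly $\Psi\colon X^s\to\mu^{-1}(0)/C$. I expect the only real subtlety is the orbit-dimension step: one must be sure that the standard fact about boundaries of reductive group orbits consisting of strictly lower dimensional orbits is available in both the algebraic and the analytic category, but this is classical in either case.
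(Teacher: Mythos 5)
Your proof is correct and takes essentially the same route as the paper's: regularity of $0$ forces finite stabilizers on $\mu^{-1}(0)$, and the orbit-dimension count rules out semistable-but-not-polystable points, with the geometric-quotient claim then read off from \ref{ss}(ii). You are in fact slightly more careful than the paper in deducing that the $G$-stabilizer (not merely the $C$-stabilizer) of a point of $\mu^{-1}(0)$ is finite.
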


\begin{proof}
If $x\in X$ be a polystable point, then the orbit $G\cdot x$ intersects $\mu^{-1}(0)$. Every point of $\mu^{-1}(0)$ has finite stabilizer. Hence the stabilizer of $x$ is also finite and $x$ is stable. If $x\in X$ is semistable but not polystable then $\overline{G\cdot x}$ contains a polystable orbit $G\cdot y$ such that $\dim G\cdot y < \dim G\cdot x$. But as we have seen earlier the point $y$ must have finite stabilizer, hence $\dim G\cdot y$ is maximal, contradiction.
\end{proof}

It is usually non-trivial to say if a given point is (semi)stable or not. One of the ways which can be practically used to answer this question is the Hilbert-Mumford criterion which we shall state after giving the necessary definition.

\hfill

\begin{definition}
\label{mu_weight_def}
The {\em $\mu$-weight} of a pair $(x,\xi)\in X\times\c$ is a number $w^\xi_\mu(x)\in\R\cup\{\pm\infty\}$ defined by the equation
\begin{equation}
\label{_mu_weight_}
w^\xi_\mu(x) := \lim\limits_{t\to\infty} \<\mu(\exp(\sqrt{-1}t\xi)x),\xi\>
\end{equation}
\end{definition}

\hfill

\remark One can prove that the limit in formula~(\ref{_mu_weight_}) is always well-defined. Indeed,  the function $\psi(t):=\<\mu(\exp(\sqrt{-1}t\xi)x),\xi\>$ is non-decreasing as
$$
\frac{d}{dt}\psi = \omega(\xi,I\xi) = ||\xi||^2\ge 0
$$
by the very definition of a moment map. 

\hfill

\begin{theorem}
\label{Hilbert_Mumford}
({\sf "Hilbert-Mumford criterion"}, \cite{Mundet_I_Riera_st},\cite{_Teleman_}) \begin{itemize}
    \item [(i)] A point $x\in X$ is stable if and only if $w^\xi_\mu(x)> 0$ for every $\xi\in \c\setminus\{0\}$.
    \item [(ii)] Assume that the action of $G$ on $X$ is energy complete (\cite{_Teleman_}, {\sf Def. 2.8}). A point $x\in X$ is semistable if and only if $w^\xi_\mu(x)\ge 0$ for every $\xi\in \c\setminus\{0\}$.
\end{itemize}
\end{theorem}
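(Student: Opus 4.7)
The natural framework for the proof is symplectic Kempf–Ness theory. For a fixed $x\in X$ and $\xi\in\c$, consider the real-valued function $f^x_\xi(t):=\<\mu(\exp(\1 t\xi)x),\xi\>$. A direct computation (already noted in the remark following \ref{mu_weight_def}) gives $\frac{d}{dt}f^x_\xi(t)=\|\xi_{\exp(\1 t\xi)x}\|^2\ge 0$, so $f^x_\xi$ is non-decreasing and the limit $w^\xi_\mu(x)\in\R\cup\{+\infty\}$ always exists. An antiderivative $\Phi^x_\xi$ of $f^x_\xi$ is therefore a convex function on $\R$, and the crucial point is that its asymptotic slope at $+\infty$ equals $w^\xi_\mu(x)$. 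The collection of these functions fits together into a $C$-invariant convex Kempf–Ness function $\Phi^x$ on the non-positively curved symmetric space $G/C$ whose slope at infinity along the geodesic direction $\xi$ is exactly $w^\xi_\mu(x)$.

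For the ``easy'' direction — (semi)stability of $x$ implies the weight condition — I would use that $\Phi^x$ attains its minimum (resp.\ is bounded below) when $x$ is polystable (resp.\ semistable), so all its asymptotic slopes are non-negative. For stability one must upgrade this to strict positivity of the slopes: reduce to a polystable representative $y\in Gx\cap\mu^{-1}(0)$ (which exists by \ref{ss}), linearise at $y$, and observe that the weight of the $\xi$-action on $T_yX$ is a sum of squared Hermitian weights which vanishes only for $\xi$ lying in $\Lie\operatorname{Stab}_C(y)$; under the finite-stabilizer hypothesis this Lie algebra is trivial, so $w^\xi_\mu(y)>0$ for $\xi\ne 0$. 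Some care is needed because $w^\xi_\mu$ is only $C$-invariant, not $G$-invariant, but translating by the element of $G$ that carries $x$ into $\mu^{-1}(0)$ is sufficient.

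For the converse, which is the heart of the theorem, I would deduce the moment-weight inequality
$$\inf_{g\in G}\|\mu(gx)\|\cdot\|\xi\|\;\geq\;-w^\xi_\mu(x)\qquad\text{for every }\xi\in\c\setminus\{0\},$$
which follows from the monotonicity of $f^x_\xi$ by a purely convex-analytic argument along the geodesic ray $t\mapsto\exp(\1 t\xi)$. Under non-negative (resp.\ strictly positive) weights in every direction, this yields $\inf_g\|\mu(gx)\|=0$, so by the standard Kempf–Ness argument on $G/C$ the closure $\overline{Gx}$ meets $\mu^{-1}(0)$, giving semistability. Strict positivity moreover makes $\Phi^x$ proper on $G/C$, so the minimum is attained and has finite stabilizer, giving stability. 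The hard point is precisely this ray-to-orbit passage: when $X$ is non-compact a $G$-orbit can escape to infinity \emph{in $X$} without this being detected by any one-parameter subgroup inside $G$. This is exactly what the energy-completeness hypothesis of Teleman is designed to exclude in part (ii); I would follow his approach there, and Mundet i Riera's analysis of proper convex functions on symmetric spaces for part (i), where the finite-stabilizer assumption already provides enough rigidity to bypass a separate completeness assumption.
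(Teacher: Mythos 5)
First, a point of comparison: the paper does not prove \ref{Hilbert_Mumford} at all --- it is imported verbatim from \cite{Mundet_I_Riera_st} and \cite{_Teleman_} (see also \cite{GRS}), so there is no internal proof to measure your sketch against. Judged on its own terms, your outline identifies the correct framework (the Kempf--Ness function on $G/C$ and its asymptotic slopes), and the easy direction is essentially right: for $y\in\mu^{-1}(0)$ the function $f^y_\xi$ vanishes at $t=0$ and is non-decreasing with $\frac{d}{dt}f^y_\xi(0)=\|\xi_y\|^2$, so $w^\xi_\mu(y)\ge 0$, with strict inequality whenever $\xi_y\ne 0$, i.e.\ for every $\xi\ne 0$ once the stabilizer is finite; transporting this back to $x$ via the invariance of asymptotic slopes under translation on $G/C$ is the ``care'' you allude to, and it can be made precise.

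The gap is in the converse. The moment-weight inequality $-w^\xi_\mu(x)\le\|\xi\|\cdot\inf_{g\in G}\|\mu(gx)\|$ points the wrong way for your purposes: when all weights are non-negative its left-hand side is non-positive, so the inequality is vacuous and does not ``yield $\inf_{g}\|\mu(gx)\|=0$''. What is actually needed is the reverse implication --- if $\inf_{g\in G}\|\mu(gx)\|>0$ then there exists an optimal destabilizing $\xi$ with $w^\xi_\mu(x)<0$ --- that is, the moment-weight \emph{equality}. This is the substantive analytic content of \cite{_Teleman_}, \cite{Mundet_I_Riera_st} and \cite{GRS}: one runs the negative gradient flow of $\|\mu\|^2$ starting at $x$, shows (using energy completeness in part (ii), or the finite-stabilizer rigidity in part (i)) that it converges to a critical point $x_\infty$ realizing $\inf_{\overline{Gx}}\|\mu\|$, and extracts the destabilizing direction from $\mu(x_\infty)$. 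You correctly locate \emph{where} energy completeness enters (orbits escaping to infinity in a non-compact $X$ without being detected by any one-parameter subgroup), but the logical chain as written skips the existence of the optimal destabilizer entirely, and that existence statement is the hard half of the theorem rather than a technical afterthought.
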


\hfill

We shall not need the definition of energy complete actions. By \cite{_Teleman_}, {\sf Prop. 2.9} any linear action is energy complete, and this is the only case to which we shall apply the criterion.

\hfill

\remark A similar description of polystable points was given by I. Mundet i Riera in \cite{Mundet_I_Riera}.

\hfill

Let us now apply the Hilbert-Mumford criterion to a specific example.

\hfill

\begin{example}
\label{Hilbert_Mumford_ex}
Consider a unitary representation of $C$ in an Hermitian vector space $(V,h)$. The complex manifold $V$ admits the standard K\"ahler form $\omega = \Im h$ which is preserved by the action of $C$. Moreover the action of $C$ is Hamiltonian and the moment map $\mu\colon V\to\c$ is given by the formula
\begin{equation}
\label{_moment_map_unit_rep_}
\<\mu(v),\xi\> = \frac{1}{2}\omega(\xi v,v) + \<\theta,\xi\>
\end{equation}
where $\theta$ is some $Ad^*$-invariant element of $\c^*$. We omit the derivation of this formula.

Let us introduce the notation. Every $\xi\in\c$ is a skew-symmetric operator on $V$ i.e. $\xi\in\mathfrak{u}(V)$. Hence its eigenvalues are imaginary. We denote them by $\sqrt{-1}\lambda_i$. Given a vector $v\in V$ we can write it as the sum $v = \sum\limits_i v_i$ where $v_i$ lies in the eigenspace corresponding to the eigenvalue $\sqrt{-1}\lambda_i$. Consider the set of indices $I_v:=\{i\:|\: v_i\ne 0\}$.

\hfill

\begin{proposition}
\label{fff}
The $\mu$-weight of a pair $(v,\xi)$ is given by the formula
\begin{equation}
\label{mu_weight}
w_\mu^\xi (v) = \begin{cases}
+\infty,&\text{if } \exists i\in I_v \text{ s.t. } \lambda_i<0\\
\<\theta,\xi\>, & \text{otherwise}
\end{cases}
\end{equation}
\end{proposition}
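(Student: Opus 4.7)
The plan is to reduce everything to an explicit asymptotic analysis by working in an eigenbasis for $\xi$. Since $\xi\in\c$ acts on $V$ as a skew-Hermitian operator, $V$ decomposes as an $h$-orthogonal direct sum of its $\sqrt{-1}\lambda_i$-eigenspaces, and on the $\sqrt{-1}\lambda_i$-eigenspace the flow $\exp(\sqrt{-1}t\xi)$ acts by the real scalar $e^{-t\lambda_i}$ (because $\sqrt{-1}\xi$ acts as the Hermitian operator with eigenvalue $-\lambda_i$). Writing $v=\sum_i v_i$ along this decomposition, the trajectory becomes $w(t):=\exp(\sqrt{-1}t\xi)v=\sum_i e^{-t\lambda_i}v_i$.

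Next, I plug $w(t)$ into the moment map formula~\eqref{_moment_map_unit_rep_}. Because distinct eigenspaces are $h$-orthogonal and $\xi$ preserves each eigenspace, the off-diagonal contributions to $h(\xi w(t),w(t))$ cancel and one is left with
\[
h(\xi w(t),w(t)) \;=\; \sqrt{-1}\sum_i \lambda_i e^{-2t\lambda_i}\|v_i\|^2.
\]
Taking imaginary parts, $\omega(\xi w(t),w(t))$ is, up to a global sign coming from the convention $\omega=\Im h$, equal to $\sum_i \lambda_i e^{-2t\lambda_i}\|v_i\|^2$. Hence $\psi(t):=\<\mu(w(t)),\xi\>=\alpha\sum_i \lambda_i e^{-2t\lambda_i}\|v_i\|^2+\<\theta,\xi\>$ for a constant $\alpha$ with $|\alpha|=\tfrac12$. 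The sign of $\alpha$ is pinned down by the monotonicity $\psi'(t)=\|\xi_{w(t)}\|^2\ge 0$ recalled right after \ref{mu_weight_def}: differentiating the formula gives $\psi'(t)=-2\alpha\sum_i\lambda_i^2 e^{-2t\lambda_i}\|v_i\|^2$, which is non-negative only when $\alpha=-\tfrac12$.

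Now take $t\to\infty$ in
\[
\psi(t)\;=\;-\tfrac12\sum_i \lambda_i e^{-2t\lambda_i}\|v_i\|^2 \;+\;\<\theta,\xi\>.
\]
For $i\in I_v$ with $\lambda_i>0$ the factor $e^{-2t\lambda_i}$ decays to zero; for $\lambda_i=0$ the explicit $\lambda_i$-prefactor makes the term vanish identically; and for $\lambda_i<0$ the quantity $-\tfrac12\lambda_i e^{-2t\lambda_i}\|v_i\|^2=\tfrac12|\lambda_i|e^{2t|\lambda_i|}\|v_i\|^2$ diverges to $+\infty$. It follows that $\psi(t)\to\<\theta,\xi\>$ when every $\lambda_i$ with $i\in I_v$ is non-negative, and $\psi(t)\to+\infty$ as soon as some $i\in I_v$ satisfies $\lambda_i<0$, which is exactly~\eqref{mu_weight}.

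The only step requiring care is the sign tracking for the identification $\omega=\Im h$, since this depends on whether $h$ is taken linear in the first or second slot; as explained above, this is resolved automatically by the non-decreasing property of $\psi$, so no convention chasing is actually needed.
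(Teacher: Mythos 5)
Your proof is correct and follows essentially the same route as the paper: decompose $v$ into $\xi$-eigencomponents, substitute the flow $\sum_i e^{-t\lambda_i}v_i$ into the moment map formula~\eqref{_moment_map_unit_rep_}, use orthogonality of the eigenspaces to diagonalize the sum, and take $t\to\infty$ term by term. Your exponent $e^{-2t\lambda_i}$ is in fact the correct one (the paper's displayed $\exp(-\lambda_i t)$ appears to drop a factor of $2$, which does not affect the limit), and your trick of fixing the overall sign via the monotonicity $\psi'(t)=\|\xi_{w(t)}\|^2\ge 0$ is a clean way to avoid convention chasing.
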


\begin{proof} Let $v = \sum\limits_i v_i$. Then $\exp(\sqrt{-1}t\xi)v = \sum \exp(-\lambda_i t) v_i$. Hence
\begin{gather*}
\<\mu(\exp(\sqrt{-1}t\xi)v,\xi\> = \frac{1}{2}\omega\left( I \sum\limits_i \lambda_i \exp(-\lambda_i t) v_i, \sum\limits_i v_i\right) + \<\theta,\xi\> = \\
= -\frac{1}{2}\sum_i \lambda_i \exp(-\lambda_i t) ||v_i||^2 + \<\theta,\xi\>
\end{gather*}
where the last identity follows from the fact that eigenspaces of a unitary operator are orthogonal. Taking the limit as $t\to \infty$ we obtain the result.
\end{proof}

\hfill

\begin{corollary}
\label{_description_of_unstable_}
The set of (semi)stable points of a vector space $V$ is a complement of a union of vector subspaces in $V$, more precisely
\begin{equation}
\label{_us_description_}
V^{s} =  V\setminus\left(\bigcup\limits_{\<\theta,\xi\> \le 0} V^{\xi\ge 0}\right);\:\:\:\:\:
V^{ss} = V\setminus\left(\bigcup\limits_{\<\theta,\xi\> < 0} V^{\xi\ge 0}\right)
\end{equation}
where the union is taken over $\xi\in\c$. Here $V^{\xi\ge 0}$ is the sum of eigenspaces of $\xi$ with eigeinvalues $\sqrt{-1}\lambda_i$ with non-negative $\lambda_i$. 
\end{corollary}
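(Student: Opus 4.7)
The plan is to apply the Hilbert--Mumford criterion (\ref{Hilbert_Mumford}) to the specific formula for the $\mu$-weight obtained in \ref{fff}. Since a linear action of a compact group on a vector space is energy complete by \cite{_Teleman_}, {\sf Prop. 2.9}, both parts of the criterion are available.

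First, I would rewrite the (semi)stability conditions in terms of their negations: a point $v\in V$ fails to be stable precisely when there exists some $\xi\in\c\setminus\{0\}$ with $w^\xi_\mu(v)\le 0$, and it fails to be semistable precisely when there exists $\xi\in\c\setminus\{0\}$ with $w^\xi_\mu(v)<0$.

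Next I would substitute formula~\eqref{mu_weight}. That proposition splits the value of the weight into two cases. Either some $\lambda_i$ with $i\in I_v$ is negative, in which case $w^\xi_\mu(v)=+\infty$ and the inequality $w^\xi_\mu(v)\le 0$ fails automatically; or else every $\lambda_i$ with $i\in I_v$ is non-negative and $w^\xi_\mu(v)=\<\theta,\xi\>$. The condition ``$v_i=0$ whenever $\lambda_i<0$'' is exactly the statement $v\in V^{\xi\ge 0}$. Thus
\begin{equation*}
V\setminus V^s \;=\; \bigl\{v\in V \;\bigm|\; \exists\,\xi\in\c\setminus\{0\}\ \text{with}\ v\in V^{\xi\ge 0}\ \text{and}\ \<\theta,\xi\>\le 0\bigr\} \;=\; \bigcup_{\<\theta,\xi\>\le 0}V^{\xi\ge 0},
\end{equation*}
where the union runs over $\xi\in\c\setminus\{0\}$. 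Replacing ``$\le 0$'' by ``$<0$'' throughout gives the description of $V^{us}=V\setminus V^{ss}$ by the same argument.

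There is essentially no hard obstacle; the whole statement is a direct translation of the two preceding results. The only bookkeeping care needed is (i) the restriction $\xi\ne 0$ in the union — the Hilbert--Mumford criterion explicitly excludes $\xi=0$, and without this the union would cover all of $V$ since $V^{0\ge 0}=V$ and $\<\theta,0\>=0$ — and (ii) noting that both $V^{\xi\ge 0}$ and the sign of $\<\theta,\xi\>$ are invariant under multiplication of $\xi$ by a positive real scalar, so the union is indexed in a well-defined way by rays in $\c$.
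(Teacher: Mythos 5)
Your proof is correct and follows essentially the same route as the paper: translate (semi)stability through the Hilbert--Mumford criterion and substitute the explicit $\mu$-weight formula of \ref{fff}, observing that $w^\xi_\mu(v)\le 0$ (resp. $<0$) happens exactly when $v\in V^{\xi\ge 0}$ and $\langle\theta,\xi\rangle\le 0$ (resp. $<0$). Your added remarks on excluding $\xi=0$ and on the invariance under positive rescaling of $\xi$ are correct refinements that the paper leaves implicit.
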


\begin{proof} By \ref{fff} the points in $\bigcup\limits_{\<\theta,\xi\> \le 0} V^{\xi\ge 0}$ are precisely the points with a non-positive $\mu$-weight and the points in $\bigcup\limits_{\<\theta,\xi\> < 0} V^{\xi\ge 0}$ are precisely the points with a negative $\mu$-weight. By the Hilbert-Mumford criterion (\ref{Hilbert_Mumford}) the point is stable if and only if its $\mu$-weight is positive and semistable if and only if its $\mu$-weight is non-negative. 
\end{proof}
\end{example}

\hfill

\remark It is worth mentioning that an investigation of linear representations from the viewpoint of GIT was carried out by V. Hoskins in \cite{Hoskins_linear}. 

\hfill


\section{Geometry of Total Spaces of Cotangent Bundles}


\subsection{Semistable points on cotangent bundles}


The most basic example of the Feix--Kaledin structure is the standard hyperk\"ahler structure on the total space $X$ of the cotangent bundle of an Hermitian vector space $V$. As a complex vector space $X_I = V\oplus V^*\cong V \oplus \overline{V}$ where the last isomorphism is induced by the Hermitian metric on $V$. As a real vector space $X_\R$ is just the direct sum of two copies of the vector space $V_\R$. The complex structures can be written as follows
\begin{gather}
    I(x,y) = (Ix,-Iy)\\
    J(x,y) = (-y,x)
\end{gather}
and the Riemannian metric is just the direct sum of the metrics on $V$. We slightly abuse the notation by denoting the complex structure $I$ on $V_\R$ and on $X_\R$ by the same symbol.

We shall now use the arguments and constructions of \ref{Hilbert_Mumford_ex} to give a description of semistable points on cotangent bundles to Hermitian vector spaces.

Consider a unitary representation of a compact connected Lie group $C$ in an Hermitian vector space $(V,h)$. It can be uniquely extended to the complex representation of the complexification $G$ of $C$. In formula~(\ref{g_action_on_cotangent}) we defined an action of the group $G$ on the total space of cotangent bundle to any $G$-manifold. In the same way we define the action of $G$ on the cotangent bundle $X$ to $V$. This action is linear on $X$ as a complex vector space $V\oplus V^*$. The representation of $G$ on $X$ is equal to the direct sum of the original representation and its dual. First, we shall consider a more general case of the direct sum of two arbitrary representations.

\hfill

\begin{lemma}
\label{_direct_sum_}
Let $U = V\oplus W$ be the direct sum of two representations of $C$. Then every moment map $\mu_V\colon V\to \c^*$ can be extended uniquely to a moment map $\mu_U\colon U\to \c^*$.
\end{lemma}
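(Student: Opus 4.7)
The plan is to leverage the explicit description of moment maps on unitary representations recorded in formula~\eqref{_moment_map_unit_rep_}: every such moment map equals the canonical one $v\mapsto\tfrac{1}{2}\omega(\xi v,v)$ plus an $Ad^*$-invariant element of $\c^*$. Combined with the already-noted fact that moment maps are determined up to such a constant, the problem reduces to matching a single parameter $\theta\in(\c^*)^{Ad^*(C)}$.

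Concretely, I would first equip $U=V\oplus W$ with the direct-sum Hermitian structure, whose K\"ahler form is $\omega_U=\omega_V\oplus\omega_W$, and note that the induced $C$-action is Hamiltonian with canonical moment map
$$
\<\mu_U^{\mathrm{can}}(v,w),\xi\>=\tfrac{1}{2}\omega_V(\xi v,v)+\tfrac{1}{2}\omega_W(\xi w,w).
$$
This restricts to $v\mapsto\tfrac{1}{2}\omega_V(\xi v,v)$ on the subspace $V\oplus\{0\}\cong V$, i.e.\ to the canonical moment map on $V$. Writing the given $\mu_V=\mu_V^{\mathrm{can}}+\theta$ with a unique $Ad^*$-invariant $\theta\in\c^*$ supplied by \eqref{_moment_map_unit_rep_}, I would define $\mu_U:=\mu_U^{\mathrm{can}}+\theta$. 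The sum of a moment map and an $Ad^*$-invariant constant is again a moment map, so $\mu_U$ is a moment map on $U$; by construction it restricts to $\mu_V$ on $V$.

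Uniqueness is then immediate: any two moment maps on $U$ extending $\mu_V$ differ by an $Ad^*$-invariant element of $\c^*$ which restricts to zero on $V\oplus\{0\}$, and any constant element vanishing on $V$ must vanish identically. I do not anticipate a real obstacle here: the lemma is a bookkeeping consequence of the classification of moment maps on linear representations together with the compatibility of the K\"ahler structure on $U$ with the direct-sum decomposition.
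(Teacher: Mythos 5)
Your proposal is correct and follows essentially the same route as the paper: both use the classification of moment maps on a unitary representation from formula~\eqref{_moment_map_unit_rep_} to reduce everything to the single $Ad^*$-invariant covector $\theta$, which is read off from the restriction to $V$. The only difference is that you spell out the existence and uniqueness steps slightly more explicitly, which is harmless.
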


\begin{proof}
Formula~(\ref{_moment_map_unit_rep_}) implies that every moment map $\mu_U\colon U\to \c^*$ has the form
$$
\<\mu_U(x,y),\xi\> = \frac{1}{2}\left(\omega_V(\xi x,x) + \omega_W (\xi y, y)\right) + \<\theta, \xi\>
$$
for some $C$-invariant $\theta\in \c^*$. The moment map $\mu_U$ is uniquely defined by the covector $\theta\in \c^*$ which can be read from the restriction of $\mu_U$ to $V$.
\end{proof}

\hfill

\begin{proposition}
\label{_direct_sum_ss_}
Let $U =V\oplus W$ be the direct sum of two representations of $C$. Denote by $p\colon U\to V$ the projection to the first factor. Fix a moment map $\mu_V\colon V\to \c^*$. Let $\mu_U\colon U\to \c^*$ be the moment map on $U$ extending $\mu_V$. (Its existence and uniqueness is guaranteed by \ref{_direct_sum_}.) Let $V^{s}$ (resp. $V^{ss}$) be the set of stable (resp. semistable) points of $V$ with respect to the moment map $\mu_V$, the analogous notation is used for $U$. Then
$$
p^{-1}(V^{s}) \subset U^{s};\:\:\:\:\:p^{-1}(V^{ss}) \subset U^{ss}
$$
\end{proposition}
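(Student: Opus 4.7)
The plan is to invoke the Hilbert--Mumford criterion (Theorem \ref{Hilbert_Mumford}) and to compare the $\mu$-weights computed via the explicit formula of Proposition \ref{fff}. Both representations $V$ and $U$ are linear unitary representations of $C$, so the action of $G$ on either of them is energy complete (\cite{_Teleman_}, Prop. 2.9); hence Theorem \ref{Hilbert_Mumford} applies to both stable and semistable points. The whole proof therefore reduces to a pointwise inequality between $\mu$-weights.

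Concretely, I would first write out the moment maps using Lemma \ref{_direct_sum_}: if the moment map on $V$ corresponds to the $C$-invariant covector $\theta\in\c^*$, then so does the moment map on $U$, and the central terms $\langle\theta,\xi\rangle$ appearing in Proposition \ref{fff} are literally the same for $V$ and $U$. I would then fix a point $(v,w)\in U$ together with $\xi\in\c\setminus\{0\}$ and decompose $v=\sum v_i$, $w=\sum w_j$ in the eigenspaces of $\xi$ acting on $V$ and $W$ with eigenvalues $\sqrt{-1}\lambda_i$ and $\sqrt{-1}\mu_j$ respectively.

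The key observation, read off directly from formula~(\ref{mu_weight}), is the inequality
\begin{equation}
w^\xi_{\mu_U}(v,w)\ \geq\ w^\xi_{\mu_V}(v).
\end{equation}
Indeed, if some $\lambda_i<0$ occurs with $v_i\neq 0$, then the same negative eigenvalue occurs in the $V$-part of $(v,w)$, and both weights equal $+\infty$. Otherwise $w^\xi_{\mu_V}(v)=\langle\theta,\xi\rangle$, and the $U$-weight is either also $\langle\theta,\xi\rangle$ (when no $\mu_j<0$ occurs in the support of $w$) or $+\infty$ (when such a $\mu_j$ does occur). In either case the displayed inequality holds.

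Now suppose $v\in V^s$. By Theorem \ref{Hilbert_Mumford}(i) this means $w^\xi_{\mu_V}(v)>0$ for every $\xi\in\c\setminus\{0\}$. Combined with the inequality above, this gives $w^\xi_{\mu_U}(v,w)>0$ for every such $\xi$, hence $(v,w)\in U^s$. The identical argument with strict inequality replaced by $\ge 0$ and Theorem \ref{Hilbert_Mumford}(ii) yields $p^{-1}(V^{ss})\subset U^{ss}$. I expect no real obstacle here: the only point that deserves attention is that the $\theta$-parameter of $\mu_V$ and $\mu_U$ truly coincide, which is exactly what Lemma \ref{_direct_sum_} guarantees; the rest is bookkeeping on eigenspaces.
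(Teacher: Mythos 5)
Your proof is correct and is essentially the paper's argument: the paper runs the same weight computation of \ref{fff}, merely packaged through \ref{_description_of_unstable_} as the containment of unstable loci $\bigcup_\xi\left(V^{\xi\ge 0}\oplus W^{\xi\ge 0}\right)\subset \bigcup_\xi p^{-1}(V^{\xi\ge 0})$, which is exactly your pointwise inequality $w^\xi_{\mu_U}(v,w)\ge w^\xi_{\mu_V}(v)$ in disguise. Your observation that the parameter $\theta$ is the same for $\mu_V$ and $\mu_U$ by \ref{_direct_sum_} is likewise the point the paper relies on.
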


\begin{proof} By applying \ref{_description_of_unstable_} to $U = V\oplus W$ we obtain that
$$
U\setminus U^{s} = \bigcup\limits_{\<\theta,\xi\><0} \left(V\oplus W\right)^{\xi\ge 0} = \bigcup\limits_{\<\theta,\xi\><0} \left(V^{\xi\ge 0}\oplus W^{\xi\ge 0}\right) \subset \bigcup\limits_{\<\theta,\xi\><0} \left(V^{\xi\ge 0}\oplus W\right) = p^{-1}(V\setminus V^{s})
$$
In other words, $p^{-1}(V^{s}) \subset U^{s}$. The proof for semistable points is similar.
\end{proof}

\hfill

\begin{corollary}
\label{_fine_}
Let $V$ be a unitary representation of a compact Lie group $C$ and $X$ the cotangent bundle to $V$. Then the preimage of $V^s$ (resp. $V^{ss}$) under the projection $\pi\colon X\to V$ is contained in $X^s$ (resp. $X^{ss}$).
\end{corollary}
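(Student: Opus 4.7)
\proof[Proof proposal]
The plan is to reduce the statement to Proposition \ref{_direct_sum_ss_} by presenting the cotangent bundle $X = T^*V$ as a direct sum of two unitary $C$-representations, in such a way that the projection $\pi\colon X\to V$ becomes the projection $p\colon V\oplus W \to V$ appearing in \ref{_direct_sum_ss_}, and such that the moment map on $X$ is the unique extension (in the sense of \ref{_direct_sum_}) of the moment map on $V$.

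First I would invoke the identification set up at the beginning of this subsection: as a complex vector space with complex structure $I$, one has $X \cong V \oplus V^*$, and under this identification the projection $\pi\colon X\to V$ is exactly the projection onto the first summand. The $C$-action defined by (\ref{g_action_on_cotangent}), namely $g\cdot(x,\alpha) = (gx,(g^{-1})^*\alpha)$, is by construction the direct sum of the given unitary representation on $V$ and its dual unitary representation on $V^*$. Hence $(V\oplus V^*, I)$ is a unitary $C$-representation with projection $\pi$ onto the first summand.

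Next I would check that the K\"ahler form on $X$ (the one used to define stability for $X$) is the standard K\"ahler form on the direct sum $V\oplus V^*$ coming from the Hermitian inner product; this is built into the description of the Feix--Kaledin hyperk\"ahler structure on the cotangent bundle of an Hermitian vector space recalled at the opening of the section. Consequently the moment map on $X$ has the shape given by formula (\ref{_moment_map_unit_rep_}) with some $\theta\in\c^*$, and the convention is that $\theta$ is the same $Ad^*$-invariant element that defines the moment map on $V$. By Lemma \ref{_direct_sum_} this means that the moment map on $X$ is precisely the unique extension of the moment map used to define $V^s$ and $V^{ss}$.

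Having matched all the data, the corollary is then immediate from Proposition \ref{_direct_sum_ss_} applied with $W = V^*$: we obtain $\pi^{-1}(V^{s}) = p^{-1}(V^{s}) \subset X^{s}$ and $\pi^{-1}(V^{ss}) = p^{-1}(V^{ss}) \subset X^{ss}$. There is essentially no obstacle here; the only thing that requires a moment of thought is the bookkeeping that the moment maps on $V$ and on $X$ correspond to the same constant $\theta$, which is exactly the content of \ref{_direct_sum_}. \endproof
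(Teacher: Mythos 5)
Your proposal is correct and follows exactly the paper's route: the paper's proof is the one-line application of \ref{_direct_sum_ss_} to $U = T^*V = V\oplus V^*$, and your additional bookkeeping (identifying $\pi$ with $p$ and matching the moment maps via \ref{_direct_sum_}) just makes explicit what the paper leaves implicit.
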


\begin{proof}
Follows by applying \ref{_direct_sum_ss_} to $U = T^*V = V\oplus V^*$.
\end{proof}

\hfill


\subsection{Quotients of affine spaces}


We consider again an Hermitian vector space $V$ with a unitary linear action of a compact group $C$. We already know that the action of $C$ on $V$ is Hamiltonian i.e. there exists a moment map $\mu\colon V\to\c^*$. By \ref{_direct_sum_} this moment map extends to the moment map $\mu_I\colon X\to \c^*$ where $X:=T^*V$. The moment map $\mu_I$ and the holomorphic symplectic moment map $\calM\colon X\to \g^* = \c^*\otimes\C$ as in \ref{_hol_symp_ham_} form together a hyperk\"ahler moment map
$$
\mu_{hk}:= (\mu_I,\Re\calM, \Im\calM)\colon X\to \c^*\otimes\R^3
$$

\hfill

\begin{theorem}
\label{very_long_proof}
Let $V$ be an Hermitian vector space. Suppose that a complex reductive group $G$ acts on $V$ linearly and the restriction of the action of $G$ to some fixed maximal compact subgroup $C\subset G$ preserves the Hermitian structure. Assume that the action of $C$ on $V$ is generically free i.e. the set $V_0$ of points with trivial stabilizer is open in $V$. Let $N := V\twice C$ be the K\"ahler quotient of $V$ by $C$ with respect to a fixed moment map $\mu\colon V\to\c^*$. Then the Feix--Kaledin metric is globally defined on the total space $Y_0$ of the cotangent bundle to the open K\"ahler stratum $N_0:=V_0\twice C$ of $N$ (see \ref{Sjamaar_Lerman} for the definition of the K\"ahler strata). Moreover, the manifold $Y_0$ is obtained as a hyperk\"ahler quotient of a certain open subset of $T^*V$. The hyperk\"ahler quotient metric on $Y_0$ coincides with the Feix--Kaledin metric.
\end{theorem}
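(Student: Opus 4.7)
The strategy is to realise $T^*N_0$ as a hyperk\"ahler quotient of an open subset of $X:=T^*V$ by $C$, then verify that the resulting hyperk\"ahler data satisfies conditions (i)--(iii) of \ref{Feix_Kaledin_preliminary}. Uniqueness in \ref{Feix_Kaledin_preliminary} will then identify this metric with the Feix--Kaledin metric, and since the quotient construction produces a genuine metric on the entire $T^*N_0$, this simultaneously establishes globality.

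The relevant moment maps are already in place: the K\"ahler moment map $\mu_I\colon X\to\c^*$ on $X$, extending $\mu$ via \ref{_direct_sum_}, and the holomorphic symplectic moment map $\calM\colon X\to\g^*$ from \ref{_hol_symp_ham_}; they assemble into the hyperk\"ahler moment map $\mu_{hk}=(\mu_I,\Re\calM,\Im\calM)$. I would restrict to the open $C$-invariant subset $U:=\pi^{-1}(V_0)\subset X$, on which $C$ acts freely by equivariance of $\pi$, and the kernel computation used in the proof of \ref{_admits_good_quotient_}(ii), applied for each of the three K\"ahler forms, shows that $0$ is a regular value of $\mu_{hk}|_U$. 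Then \ref{_red_main_theorem_} produces a hyperk\"ahler manifold $Y:=(\mu_{hk}^{-1}(0)\cap U)/C$.

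The crux is a biholomorphism $(Y,I)\cong T^*N_0$ intertwining the holomorphic symplectic forms. On the one hand, \ref{_admits_good_quotient_} applied to $V_0$ identifies the good quotient of $\calM^{-1}(0)\cap U$ by $G$ with $T^*N_0$, equipped with its tautological holomorphic symplectic form. On the other hand, the K\"ahler-quotient / good-quotient correspondence \ref{ss}, applied to $\calM^{-1}(0)\cap U$ with K\"ahler data restricted from $X$, identifies $(Y,I)$ with this same good quotient, since $C$ acts freely on $U$ and the stable and semistable loci agree. Both holomorphic symplectic forms descend from $\Omega_X=-d\tau$ on $X$, so the resulting biholomorphism respects them, giving condition (i) of \ref{Feix_Kaledin_preliminary}. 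For (ii), the fibrewise $U(1)$-action $\lambda\cdot(x,\alpha)=(x,\lambda\alpha)$ on $X=V\oplus V^*$ commutes with $C$, preserves the hyperk\"ahler metric, satisfies $\lambda^*\Omega=\lambda\Omega$ (immediate from $\Omega=-d\tau$), and has $V\subset X$ as its fixed locus; these properties descend to $Y$ and correspond to the natural fibrewise rotation on $T^*N_0$. For (iii), the zero section of $X$ meets $U$ in $V_0$, and its image in $Y$ is the K\"ahler quotient $V_0\twice C=N_0$ with its given K\"ahler metric. Invoking uniqueness in \ref{Feix_Kaledin_preliminary} then completes the proof.

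The most delicate step is expected to be the bookkeeping in the biholomorphism $(Y,I)\cong T^*N_0$: one must carefully compare the hyperk\"ahler, holomorphic symplectic, and K\"ahler quotients on overlapping open loci, and verify that the induced holomorphic symplectic and K\"ahler data on $T^*N_0$ match the standard ones. Once this identification is settled, the remaining verifications are of a formal nature.
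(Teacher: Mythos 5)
Your overall strategy is the paper's own: realize the cotangent bundle as a hyperk\"ahler quotient, identify the underlying complex symplectic manifold with $T^*N_0$ through the good-quotient correspondence, check HKLR-compatibility of the fibrewise $U(1)$-action, and conclude by the uniqueness clause of \ref{Feix_Kaledin_preliminary}. There is, however, one genuine gap, and it sits exactly at the step you yourself single out as the crux. You work over $U=\pi^{-1}(V_0)$ with $V_0$ the locus of trivial stabilizer, apply \ref{_admits_good_quotient_} to $V_0$ and \ref{ss} to $\calM^{-1}(0)\cap U$, and assert that ``the stable and semistable loci agree''. But freeness of the $C$-action is not stability: \ref{_admits_good_quotient_} presupposes that $V_0$ admits a good quotient by $G$, and \ref{ss} identifies the K\"ahler quotient only with the good quotient of the \emph{semistable} locus of $\calM^{-1}(0)\cap U$, not of all of it. What is needed is (a) to replace $V_0$ by $V^\circ:=\{x\in V^s\ |\ \operatorname{Stab}_G(x)=\{\id\}\}$, whose geometric quotient is $N_0$ by \ref{ss} and \ref{Sjamaar_Lerman}, and (b) the nontrivial input that every point of $\calM^{-1}(0)$ lying over $V^\circ$ is stable in $T^*V$. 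Point (b) is precisely \ref{_fine_} (via \ref{_direct_sum_ss_}), which you never invoke; it does not follow from freeness, since stability in $T^*V$ is governed by the direct sum representation $V\oplus V^*$ and must be compared with stability downstairs. (Indeed the reverse inclusion fails: in the Hirzebruch example the set $E$ of stable points of $T^*\C^4$ lying over unstable points of $\C^4$ is nonempty, which is exactly why the hyperk\"ahler quotient of all of $T^*\C^4$ is strictly larger than $T^*\Sigma_n$.)

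A secondary point: you dismiss as formal the claim that the $U(1)$-action obtained by descent is the fibrewise rotation of $T^*N_0$ under the identification. The paper spends real effort here (\ref{lemmaa}, \ref{lemmab}, and the description $Y_0\cong\mathrm{Spec}_{N_0}(\mathrm{S}^\bullet(\mathcal TN_0))$ together with \ref{_forms_on_quot_}), because two a priori different descended actions --- one from the hyperk\"ahler quotient by $C$, one from the good quotient by $G$ --- must be matched and then recognized as fibrewise multiplication. By contrast, your treatment of condition (i) via the descent of the tautological $1$-form is exactly the paper's Step 2 and is fine.
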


\hfill

Before going to the proof of \ref{very_long_proof} we need to state several auxiliary results. The next two lemmas are concerned with descent of group actions to quotients.

\hfill

\begin{lemma}
\label{lemmaa}
Let $S\in Ob(\mathcal C)$ be a complex variety in the algebraic or analytic category equipped with an action of a reductive group $G$. Suppose also that a group $H$ acts on $S$ algebraically (resp. holomorphically) and the action of $H$ commutes with $G$. Assume that there exists the good quotient $T = S/G$. Then the action of $H$ on $S$ descends uniquely to an action of $H$ on $T$ in such a way that the quotient map is $H$-equivariant.
\end{lemma}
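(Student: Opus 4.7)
The plan is to invoke the universal property of good quotients, which by the remark after Definition~2.3 are in particular categorical quotients. For each fixed $h\in H$, consider the morphism $\phi_h\colon S\to T$ defined by $\phi_h(s)=p(h\cdot s)$, where $p\colon S\to T$ is the quotient map. Since the $H$- and $G$-actions commute, $\phi_h$ is $G$-invariant: for $g\in G$ we have $\phi_h(g\cdot s)=p(hg\cdot s)=p(gh\cdot s)=p(h\cdot s)=\phi_h(s)$. The categorical property then provides a unique morphism $\tilde h\colon T\to T$ with $\tilde h\circ p=p\circ h$. The same uniqueness applied to compositions yields $\widetilde{h_1 h_2}=\tilde h_1\circ \tilde h_2$ and $\tilde e=\operatorname{id}_T$, so each $\tilde h$ is an automorphism of $T$.

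To upgrade this pointwise prescription into an actual morphism $a\colon H\times T\to T$ in the category $\mathcal C$, I would consider the $G$-action on $H\times S$ which is trivial on the first factor and the given one on $S$, together with the morphism $\Phi\colon H\times S\to T$, $(h,s)\mapsto p(h\cdot s)$, which is $G$-invariant. The key claim is that $\operatorname{id}_H\times p\colon H\times S\to H\times T$ is itself a good quotient for this $G$-action. All four properties of Definition~2.3 are local over the base: covering $T$ by affine, respectively Stein, opens $U_\alpha$ with $p^{-1}(U_\alpha)$ affine/Stein, the products $H\times U_\alpha$ and $H\times p^{-1}(U_\alpha)$ remain of the same type (a product of Stein spaces is Stein, a product of affine varieties is affine), and properties (ii)--(iv) pass through product with a space on which $G$ acts trivially. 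Applying the categorical property of this new good quotient to $\Phi$ then produces the desired morphism $a\colon H\times T\to T$, whose value at $(h,t)$ is $\tilde h(t)$.

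It remains to check the action axioms and uniqueness. The associativity and unit laws are equalities of morphisms $H\times H\times T\rightrightarrows T$ and $T\rightrightarrows T$; after composing each side with the surjective map $\operatorname{id}_{H\times H}\times p$, respectively $p$, the equalities hold on the level of $S$ because $H$ already acts on $S$. Since $p$ is in particular an epimorphism in $\mathcal C$ (another consequence of being a categorical quotient), these identities descend to $T$. Uniqueness of the descended $H$-action follows from the surjectivity of $p$ combined with the equivariance requirement $a(h,p(s))=p(h\cdot s)$.

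The only non-formal step is the claim that $H\times T$ is a good quotient of $H\times S$; everything else is purely diagrammatic. In the algebraic setting this is automatic, while in the complex-analytic setting it reduces to the stability of local Steinness under products with $H$. I expect this to be the main obstacle, and the rest of the argument to be essentially an exercise in categorical bookkeeping.
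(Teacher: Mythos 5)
Your argument is correct, but it follows a different route from the paper's. The paper reduces to the affine case (using that the good quotient map $p$ is affine, respectively locally Stein), writes $S=\mathrm{Spec}(A)$ and $T=\mathrm{Spec}(A^G)$, and observes that the $H$-action on $A$ preserves the subalgebra $A^G$ because the two actions commute; the analytic case is declared similar. You instead work purely with the universal property: for each $h\in H$ the composite $p\circ(h\cdot-)$ is $G$-invariant, hence factors uniquely through $p$, and uniqueness of the factorization gives the group law and the automorphism property. Your first and third paragraphs already constitute a complete proof of the lemma as stated, and they have the advantage of treating the algebraic and analytic categories uniformly, with no local model needed. Your second paragraph, however, is aimed at a stronger statement than the lemma asks for: here $H$ is merely an abstract group acting by automorphisms in $\mathcal C$ (in the paper's application $H=U(1)$ acting on a complex variety, so $H\times S$ is not even an object of $\mathcal C$), and the lemma only requires that each individual $h$ descend to an automorphism of $T$. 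So the step you single out as ``the main obstacle'' --- that $H\times T$ is a good quotient of $H\times S$ --- can simply be discarded; what remains is exactly the categorical-quotient argument, which is sound and arguably cleaner than the paper's reduction to rings of invariants.
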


\begin{proof} We shall prove the lemma in the algebraic case, the proof in the complex analytic case is similar. As the good quotient map is by definition affine it is enough to prove the statement in the case when $S$ is affine i.e. $S=\mathrm{Spec}(A)$ where $A$ is a $\C$-algebra of finite type. In that case $T = \mathrm{Spec}(A^G)$ by \ref{_existence_of_quotient_affine_}. As the action of $H$ commutes with the action of $G$ the subalgebra $A^G$ is preserved by the $H$-action on $A$. This action on $A^G$ induces the action of $H$ on $T$.
\end{proof}

\hfill

\begin{lemma}
\label{lemma}
Let $M$ be a K\"ahler manifold and $X$ be the total space of the cotangent bundle to $M$. Suppose that $X$ admits the globally defined Feix--Kaledin hyperk\"ahler metric. Suppose that $M$ is equipped with a Hamiltonian action of a compact Lie group $C$. Consider the natural lifting of this action to $X$. Fix a hyperk\"ahler moment map $\mu_{hk}\colon X\to\c^*\otimes\R^3$ on $X$ of the form
$$
    \mu_{hk} = (\mu_I, \Re \mathcal M, \Im \mathcal M)
$$
with $\mathcal M$ as in \ref{_hol_symp_ham_} and $\mu_I$ an arbitrary moment map for $\omega_I$. Consider the $U(1)$-action on $X$ by fiberwise multiplication. Then $\mu^{-1}(0)$ is $U(1)$-invariant. Moreover, the action of $U(1)$ on $\mu^{-1}(0)$ descends to the action of $U(1)$ on the hyperk\"ahler quotient $X\trice C$ which is HKLR-compatible with the hyperk\"ahler metric.
\end{lemma}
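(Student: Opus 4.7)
The plan is to check each of the three HKLR-compatibility axioms for the descended action, after first showing that the $U(1)$-action on $X$ preserves $\mu_{hk}^{-1}(0)$ and commutes with the $C$-action. First, the fiberwise $U(1)$-rotation commutes with the lifted $C$-action on $X=T^*M$ because $C$ acts linearly on each cotangent fiber, so every fundamental vector field $\xi$ for $\xi\in\c$ is $U(1)$-invariant. By \ref{Feix_Kaledin_preliminary}~(ii) the $U(1)$-action on $X$ is HKLR-compatible, hence preserves $g$ and $I$, and therefore $\omega_I$. Consequently $d\<\mu_I,\xi\>=\xi\cntrct\omega_I$ is $U(1)$-invariant, which means $\<\mu_I,\xi\>\circ\lambda-\<\mu_I,\xi\>$ is locally constant; evaluating at any zero-section point (which is $U(1)$-fixed) shows this constant vanishes, so $\mu_I$ is $U(1)$-invariant. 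For the holomorphic component, formula~(\ref{_moment_map_cot_}) gives $\<\calM(x,\alpha),\xi\>=\<\alpha,\xi\>$, whence $\calM(x,\lambda\alpha)=\lambda\calM(x,\alpha)$, and therefore $\calM^{-1}(0)$ is $U(1)$-invariant. Thus $\mu_{hk}^{-1}(0)$ is $U(1)$-invariant, and the descent of the $U(1)$-action to $Y=\mu_{hk}^{-1}(0)/C$ follows from the commutation of the two actions (the smooth analogue of \ref{lemmaa}).

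To verify HKLR-compatibility on $Y$ I would invoke the description from \ref{_red_main_theorem_}: at each $x\in\mu_{hk}^{-1}(0)$ the horizontal subspace $\im(\c)^\perp\subset T_x\mu_{hk}^{-1}(0)$ is sent isomorphically onto $T_{[x]}Y$, and this identification is an isomorphism of hyper-Hermitian vector spaces. Since $U(1)$ is isometric with respect to $g$, commutes with $C$, and preserves $\mu_{hk}^{-1}(0)$, its differential carries horizontal subspaces to horizontal subspaces isometrically and commutes with $I$; thus both the induced metric and the complex structure $I_Y$ on $Y$ are $U(1)$-invariant, giving axiom (i) of \ref{compatible_action}. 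The holomorphic symplectic form $\Omega_Y$ descends via the same identification, and the Feix--Kaledin identity $\lambda^*\Omega_X=\lambda\Omega_X$ then descends to $\lambda^*\Omega_Y=\lambda\Omega_Y$, which is axiom (ii).

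Axiom (iii) --- that the fixed locus of $U(1)$ on $Y$ is complex Lagrangian --- is the delicate point. The $U(1)$-fixed locus on $X$ is exactly the zero section $M$; its intersection with $\mu_{hk}^{-1}(0)$ equals $\mu_M^{-1}(0)$ where $\mu_M:=\mu_I\restrict{M}$, so the corresponding component of the fixed locus on $Y$ is the K\"ahler quotient $M\twice C$, a complex submanifold of complex dimension $\tfrac12\dim_\C Y$ on which $\Omega_Y$ vanishes (inherited from $\Omega_X\restrict{M}=0$, as $M\subset X$ is complex Lagrangian). Any further component of the fixed locus on $Y$ is represented by some $[x]$ stabilized under a twisted action $\lambda\cdot\phi(\lambda)^{-1}$ on $X$ for a nontrivial homomorphism $\phi\colon U(1)\to C_{x_0}$; such a twisted $U(1)$-action on $X$ is again HKLR-compatible (being a composition of the Feix--Kaledin rotation with hyperk\"ahler isometries from $C$), so its fixed set in $X$ is complex Lagrangian and the image in $Y$ inherits this property. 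The main obstacle I expect is precisely this last point: enumerating the components of the $U(1)$-fixed locus in $Y$ cleanly and proving each has the correct half-dimension, rather than merely being isotropic (which is already forced by the weight decomposition coming from $\lambda^*\Omega_Y=\lambda\Omega_Y$).
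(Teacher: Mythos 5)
Your proposal follows the paper's own route on every point the paper actually argues: the $U(1)$-invariance of $\calM^{-1}(0)$ read off from the linearity of formula~(\ref{_moment_map_cot_}) in $\alpha$; the invariance of $\mu_I$ obtained by noting that $\lambda^*\mu_I$ is again a moment map for $C$ (since $\xi$ and $\omega_I$ are $U(1)$-invariant), hence differs from $\mu_I$ by a constant, which vanishes because the two agree on the zero section; the descent of the action via commutation with $C$; and the descent of the identities $\lambda^*\omega_I=\omega_I$ and $\lambda^*\Omega=\lambda\Omega$ through the quotient map. The divergence is in axiom (iii) of \ref{compatible_action}. The paper does not verify it at all (its proof opens with ``it is a classical fact'' and checks only the two form identities), whereas you attempt it, and the difficulty you flag at the end is genuine. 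The specific step that fails is the claim that the twisted action $\lambda\mapsto\lambda\cdot\phi(\lambda)^{-1}$ is ``again HKLR-compatible, so its fixed set is complex Lagrangian'': composing with isometries from $C$ preserves axioms (i) and (ii), and those two axioms do force the fixed set to be $I$-complex and $\Omega$-isotropic (only weights $k+l=1$ pair nontrivially, so the weight-$0$ subspace is isotropic), but they do not force half-dimensionality. Concretely, at a zero-section point $x$ where $\phi$ acts on $T_xM$ with weights $a_i$, the twisted weights on $T_xX=T_xM\oplus T_x^*M$ are $-a_i$ and $1+a_i$, so any $a_i\notin\{0,-1\}$ produces directions of weight outside $\{0,1\}$ and the fixed subspace upstairs is strictly isotropic, not Lagrangian; one would have to show that the reduction by $C$ restores the dimension count. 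Since the paper itself leaves axiom (iii) unproved, your proposal is not weaker than the printed proof --- it is the same proof plus an honest but unfinished attempt at the one axiom the paper omits --- but the composition argument as you state it is not a valid substitute for that missing verification.
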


\begin{proof} It is a classical fact. From the explicit description of $\mathcal M$ (\ref{_hol_symp_ham_}) we notice that $\mathcal M^{-1}(0)$ is $U(1)$-invariant. Hence the $U(1)$-invariance of $\mu_{hk}^{-1}(0)$ will follow from the $U(1)$-invariance of $\mu_I$.

For every $\lambda\in U(1)$ the function $\lambda^*\mu_I$ is still a moment map for the action of $C$. Indeed, for every $\xi\in\c$
$$
d\<\lambda^*\mu,\xi\> = \lambda^*\<d\mu,\xi\> = \lambda^*(\xi\cntrct\omega_I) = \xi\cntrct\omega_I
$$
as the vector field $\xi$ and the $2$-form $\omega_I$ are both $U(1)$-invariant. Hence the functions $\lambda^*\mu_I$ and $\mu_I$ differ by a constant. As they are certainly equal on $M$ they are also equal on $X$.

The $U(1)$-action commutes with the $C$-action, thereby descending to the action of $U(1)$ on the quotient $X\trice C$. Let $\tilde{\omega_I}$ and $\tilde{\Omega}$ denote the K\"ahler form and the holomorphic $2$-form on the quotient. They are uniquely defined by the properties that $\rho^*\tilde{\omega_I} = \omega_I\restrict{\mu^{-1}(0)}$ and $\rho^*\tilde{\Omega} = \Omega\restrict{\mu^{-1}(0)}$ where $\rho\colon \mu^{-1}(0)\to X\trice C$ is the quotient map. Hence the compatibility properties (\ref{compatible_action})
\begin{gather}
    \lambda^*\omega_I = \omega_I\\
    \lambda^*\Omega = \lambda\Omega
\end{gather}
are preserved by taking the quotient.
\end{proof}

\hfill

The following trivial lemma describes the $U(1)$-action on total spaces of vector bundles in terms of the $U(1)$-action on functions. We shall omit its proof.

\hfill

\begin{lemma}
\label{lemmab}
Let $E = \mathrm{Spec}_M(\mathrm{S}^\bullet \mathcal F)$ be the total space of a vector bundle over a complex algebraic variety $M$ where $\mathcal F$ is the locally free sheaf on $M$ whose dual is isomorphic to the sheaf of sections of the bundle $E\to M$. Then the $U(1)$-action on $E$ by fiberwise multiplication is induced by the $U(1)$-action on the sheaf of algebras $\mathrm{S}^\bullet \mathcal F$ given by

$$
\lambda\cdot s = \lambda^{-r} s
$$
for $\lambda\in U(1)\subset \C^\times$ and $s$ a local section of $\mathrm{S}^r\mathcal F$.
\end{lemma}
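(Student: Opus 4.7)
The plan is to reduce the claim to elementary linear-algebraic bookkeeping via the relative $\mathrm{Spec}$ construction. First I would recall that, by the very definition $E = \mathrm{Spec}_M(\mathrm{S}^\bullet\mathcal{F})$, a section of $\pi\colon E\to M$ over an open $U\subset M$ is the same datum as an $\calO_U$-algebra homomorphism $\mathrm{S}^\bullet\mathcal{F}|_U\to\calO_U$, hence as an $\calO_U$-linear map $\mathcal{F}|_U\to\calO_U$, i.e.\ as a section of $\mathcal{F}^*|_U$. In particular the fiber $\pi^{-1}(x)$ is canonically identified with $\mathcal{F}_x^*$, and fiberwise multiplication by $\lambda\in U(1)\subset\C^\times$ on $E$ corresponds to the standard scalar action on each $\mathcal{F}_x^*$.

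Second, I would unwind how $\mathcal{F}$ sits inside $\pi_*\calO_E = \mathrm{S}^\bullet\mathcal{F}$: a local section $s\in\mathcal{F}(U)$ tautologically defines the fiberwise-linear function $\tilde s$ on $\pi^{-1}(U)$ given by $\tilde s(\alpha):=\alpha(s)$ for $\alpha\in\mathcal{F}_x^*$. Using the standard pullback convention $(\lambda\cdot f)(e):=f(\lambda^{-1}e)$ for the $U(1)$-action on functions on $E$, a one-line computation yields
\[
(\lambda\cdot\tilde s)(\alpha) = \tilde s(\lambda^{-1}\alpha) = \lambda^{-1}\alpha(s) = \lambda^{-1}\tilde s(\alpha),
\]
so the degree-one generators of $\mathrm{S}^\bullet\mathcal{F}$ transform with weight $-1$.

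Third, because the $U(1)$-action fixes the base $M$ pointwise, it acts on $\pi_*\calO_E$ by $\calO_M$-algebra automorphisms; the action on each graded piece $\mathrm{S}^r\mathcal{F}$ is therefore forced by multiplicativity to be multiplication by $\lambda^{-r}$, which is exactly the stated formula. There is no real obstacle in the argument — the only genuine input is the sign convention for how a group acts on functions via pullback. The opposite convention would produce weight $+r$ instead, so the negative exponent in the lemma is not cosmetic but records the standard contravariant direction between points of $E$ and elements of the function algebra $\mathrm{S}^\bullet\mathcal{F}$.
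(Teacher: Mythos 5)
The paper declares this a ``trivial lemma'' and explicitly omits its proof, so there is nothing to compare against; your argument is correct and is exactly the standard unwinding of the relative $\mathrm{Spec}$ construction that the author clearly has in mind. In particular you correctly isolate the one point of actual content --- the contravariant pullback convention $(\lambda\cdot f)(e)=f(\lambda^{-1}e)$ on functions --- which is precisely what forces the weight $-r$ on $\mathrm{S}^r\mathcal F$ rather than $+r$.
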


\hfill

We are now ready to give a proof of \ref{very_long_proof}.

\hfill

\pstep Consider the set 
$$
V^\circ := \{x\in V\:|\:x\in V^s\textrm{ and }\operatorname{Stab}_G(x) := \{\id\}\}
$$

By \ref{ss} the geometric quotient of $V^\circ$ exists and is isomorphic to the unique open K\"ahler stratum $N^\circ$ in $N:=V\twice C$. Let $X^\circ:=T^*V^\circ$ be the total space of cotangent bundle to $V^\circ$ and $Z^\circ:=\{x\in X^\circ\:|\: \calM(x) = 0\}$. \ref{_admits_good_quotient_} applied to the free action of $G$ on $V^\circ$ tells us that $Y^\circ := T^*N^\circ$ is isomorphic to $X^\circ\twice G = Z^\circ/G$ as a complex manifold. By \ref{_fine_} all the points of $Z^\circ$ are stable. Hence by \ref{ss} $Z^\circ/G$ is isomorphic to the K\"ahler quotient $Z^\circ\twice C = X^\circ\trice C$ as a complex manifold. One has the following commutative diagram.
$$
\xymatrix{
Y^\circ:=T^*N^\circ \ar[dd]^{\pi_N} && Z^\circ \ar[ll]^{p_Y} \ar@{^{(}->}[r] & X^\circ:=T^*V^\circ \ar[dd]^{\pi_V}\\
& \{x\in X^\circ\:|\:\mu_{hk}(x) = 0\} \ar[ul]^{\rho_Y} \ar@{^{(}->}[ur] &&\\
N^\circ &&& V^\circ \ar[lll]^{p_N}\\
& \{x\in V^\circ\:|\:\mu(x) = 0\} \ar[ul]^{\rho_N} \ar@{^{(}->}[urr]
}
$$
The manifold $Y^\circ$ is a hyperk\"ahler quotient of $X^\circ$. Hence $Y^\circ$ admits a hyperk\"ahler metric (\ref{_red_main_theorem_}). It remains to prove that the constructed metric is indeed the Feix--Kaledin metric.

{\bf Step 2:} Let $\tilde{\Omega}\in\Lambda^{2,0}_I(Y^\circ)$ be the holomorphic symplectic form on $Y^\circ$ induced by the hyperk\"ahler structure. Equivalently, this is the $2$-form on $Y^\circ$ whose pullback to $Z^\circ$ by the quotient map $p_Y$ is $\Omega\restrict{Z^\circ}$. Here $\Omega$ stands as before for the standard holomorphic symplectic form on $X^\circ$. Such a $2$-form exists by \ref{_forms_on_quot_}. 
We need to prove that $\tilde{\Omega}$ is the standard holomorphic symplectic form on the total space of the cotangent bundle. 

Consider the restriction of the tautological $1$-form $\tau$ on $X^\circ$ to $Z^\circ$. In \ref{_hol_symp_ham_} we proved that $\tau\restrict{Z^\circ}$ is $G$-invariant and that $\tau\restrict{Z^\circ}(\xi) = \<\mathcal M\restrict{Z^\circ},\xi\> = 0$ for every $\xi\in\g$. Hence by \ref{_forms_on_quot_} $\tau\restrict{Z^\circ}$ descends to a $1$-form on $Y^\circ = Z^\circ/G$. We shall denote this $1$-form by $\tilde{\tau}$. Now take a point $x\in V^\circ$ and some $\alpha \in Ann(\g_x)\subset T^*_xV$. The covector $\alpha$ can be seen as a point in $Z^\circ$. Denote by $\tilde{x}$ and $\tilde{\alpha}$ the images of $x$ and $\alpha$ under the projection $p_Y\colon Z^\circ \to Y^\circ$. Consider any vector $v\in T_\alpha Z^\circ$. Then
\begin{gather*}
    \tilde{\tau}_{\tilde{\alpha}}(p_{Y*}v) = \tau_\alpha(v) = \alpha(\pi_{V*}v) = \tilde{\alpha}(p_{N*}\pi_{V*}v) = \tilde{\alpha}(\pi_{N*}p_{Y*}v)
\end{gather*}
where the first and the third identity hold as $\tau\restrict{Z^\circ} = p_Y^*\tilde{\tau}$ and $\alpha = p_N^*\tilde{\alpha}$ as a covector. The second identity follows from the definition of $\tau$. We obtain that $\tilde{\tau}$ is the tautological $1$-form on $Y^\circ = T^*N^\circ$. By the construction of the $2$-form $\tilde{\Omega}$ it is equal to $-d\tilde{\tau}$. We see that $\tilde{\Omega}$ is indeed the standard $2$-form on $Y^\circ$.

{\bf Step 3:} Now we shall prove that the $U(1)$-action on $Y^\circ$ by fiberwise multiplication is HKLR-compatible with the constructed hyperk\"ahler metric on $Y^\circ$.

By applying \ref{lemma} to $M = V^\circ$ we introduce a $U(1)$-action on $Y^\circ = X^\circ\trice C$ which is HKLR-compatible with the hyperk\"ahler metric. When we apply \ref{lemmaa} to the case of $S = Z^\circ$, $H = U(1)$ we also obtain a $U(1)$-action on $Y^\circ = Z^\circ/G$. These two actions coincide as the inclusion of $\mu_{hk}^{-1}(0)$ to $Z^\circ$ is $U(1)$-equivariant.

One can see that
$$
\label{abc}
Y^\circ \cong \mathrm{Spec}_{N^\circ}(\mathrm S^\bullet (\mathcal TN^\circ)) \cong \mathrm{Spec}_{N^\circ}(S^\bullet(p_*(\mathcal TV^\circ/\g\cdot\calO_{V^\circ}))^G)
$$
Here the first isomorphism holds as $Y^\circ$ is by definition the total space of the cotangent bundle to $N^\circ$. The second isomorphism follows from \ref{_forms_on_quot_}. With these isomorphisms in mind we use \ref{lemmaa} and \ref{lemmab} to see that the $U(1)$-action on $Y^\circ = T^*N^\circ$ constructed above is the one given by the fiberwise multiplication. We finally proved \ref{very_long_proof}.\endproof{}

\hfill

\remark \ref{very_long_proof} can be stated without the assumption that the action of $C$ on $V$ is generically free as follows.

\hfill

\begin{theorem}
\label{www}
Let $V$ be an Hermitian vector space. Suppose that a complex reductive group $G$ acts on $V$ linearly and the restriction of the action of $G$ to some fixed maximal compact subgroup $C\subset G$ preserves the Hermitian structure. Let $N := V\twice C$ be the K\"ahler quotient of $V$ by $C$ with respect to a fixed moment map $\mu\colon V\to\c^*$. Then the Feix--Kaledin metric is globally defined on the total space $Y_i$ of the cotangent bundle to every K\"ahler stratum $N_i$ of $N$ (see \ref{Sjamaar_Lerman} for the definition of the K\"ahler strata).
\end{theorem}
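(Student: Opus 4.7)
The plan is to reduce Theorem~\ref{www} to Theorem~\ref{very_long_proof} by realizing each K\"ahler stratum $N_i = N_{(D)}$ of $N = V \twice C$ as an open K\"ahler submanifold of a K\"ahler quotient of a unitary representation on which the acting group acts generically freely.

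Fix a closed subgroup $D \subset C$ for which the stratum $N_{(D)}$ is non-empty. I take the linear subspace $V^D \subset V$ fixed pointwise by $D$. It is preserved by $N(D)$, and since $D$ acts trivially on $V^D$, the $N(D)$-action descends to a linear action of $W(D) = N(D)/D$. Complexifying, $N(D)_\C \subset G$ is a closed complex reductive subgroup, $D_\C$ is a closed normal reductive subgroup of $N(D)_\C$, and the quotient $W(D)_\C = N(D)_\C/D_\C$ is again a complex reductive group acting linearly on $V^D$, with $W(D)$ as its maximal compact subgroup. The action of $W(D)$ on $V^D$ is generically free: the open subset $V_D \subset V^D$ (whose complement is the union of the finitely many proper linear subspaces $V^{D'}$ with $D \subsetneq D'$) consists of points on which the $W(D)$-stabilizer $(N(D) \cap D)/D$ is trivial.

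A moment map $\mu^D \colon V^D \to \mathfrak{w}_D^*$ for the $W(D)$-action is obtained by restricting $\mu$ to $V^D$ and projecting $\mathfrak{n}(D)^* \twoheadrightarrow \mathfrak{w}_D^*$. This is well-defined because $\langle \mu|_{V^D}, \eta\rangle$ equals the constant $\langle \theta, \eta\rangle$ for every $\eta \in \mathfrak{d}$ (since $\eta$ acts trivially on $V^D$), and this constant vanishes, being equal to $\langle \mu(v), \eta\rangle = 0$ for any $v \in V_D \cap \mu^{-1}(0)$. By \ref{Sjamaar_Lerman}, $\mu^D|_{V_D}$ agrees with the Sjamaar--Lerman moment map on $V_D$, so $V_D \cap (\mu^D)^{-1}(0) = V_D \cap \mu^{-1}(0)$, and $N_{(D)} = (V_D \cap \mu^{-1}(0))/W(D)$ is canonically an open K\"ahler submanifold of the open K\"ahler stratum of $V^D \twice W(D)$, with matching K\"ahler structures inherited from the Hermitian form on $V^D$.

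Applying \ref{very_long_proof} to $V^D$ with its linear $W(D)_\C$-action and moment map $\mu^D$ produces a globally defined Feix--Kaledin hyperk\"ahler metric on the cotangent bundle of the open K\"ahler stratum of $V^D \twice W(D)$. Restricting to the open submanifold $T^*N_{(D)} = Y_i$ gives the required metric. The main technical obstacle is the bookkeeping needed to identify moment maps and K\"ahler structures between Sjamaar--Lerman's intrinsic picture of $N_{(D)}$ and its realization inside $V^D \twice W(D)$; a mildly subtle related point is that the open stratum of $V^D \twice W(D)$ may strictly contain $N_{(D)}$ (for instance when some $D' \supsetneq D$ satisfies $N(D) \cap D' = D$, the image of $V^{D'}$ also lies in the open $W(D)$-stratum), but the Feix--Kaledin metric is then defined on a slightly larger hyperk\"ahler manifold and still restricts to $Y_i$ globally.
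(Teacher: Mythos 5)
Your proof is correct and takes essentially the same route as the paper, which disposes of this theorem in a single sentence: the closure of each K\"ahler stratum is itself a K\"ahler quotient of a vector space (here $V^D\twice W(D)$, with $W(D)$ acting generically freely), so \ref{very_long_proof} applies. Your write-up merely supplies the details the paper leaves implicit --- generic freeness of $W(D)$ on $V^D$, descent of the moment map to $\mathfrak w_D^*$, and the identification of $N_{(D)}$ with an open subset of the open stratum of $V^D\twice W(D)$.
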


\hfill

\ref{www} follows easily from \ref{very_long_proof} as the closure of every K\"ahler stratum $N_i\subset N$ is itself a K\"ahler quotient of a vector space (\ref{Sjamaar_Lerman}).

\hfill

\subsection{Metric completions of cotangent bundles}


Let $N$ be again a K\"ahler quotient of an Hermitian vector space $V$. Assume that $N$ is smooth i.e. $N=N^\circ$ in the notation of the proof of \ref{very_long_proof}. The manifold $Y:=T^*N$ admits a hyperk\"ahler metric by \ref{very_long_proof}. 

\hfill

\begin{theorem} 
\label{comp}
The metric completion $\tilde{Y}$ of $Y$ has a structure of a stratified hyperk\"ahler space (in the sense of \cite{Mayrand_stratification}).
\end{theorem}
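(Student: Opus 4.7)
The plan is to realize the metric completion $\tilde Y$ as (a subspace of) the hyperk\"ahler quotient $\tilde{Y}' := (T^*V)\trice C$ and transport the stratified hyperk\"ahler structure on $\tilde{Y}'$ given by \ref{Dancer_Swann}. Set $\mathcal Z := \mu_{hk}^{-1}(0) \subset T^*V$, so that $\tilde{Y}' = \mathcal Z/C$. By \ref{Dancer_Swann}, $\tilde{Y}'$ decomposes as a disjoint union of hyperk\"ahler manifolds $(\tilde{Y}')_{(D)}$ indexed by conjugacy classes of stabilizers $D \subset C$ for the $C$-action on $T^*V$; this is exactly a stratified hyperk\"ahler structure in the sense of \cite{Mayrand_stratification}.

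Next I would prove that $\tilde{Y}'$ is complete as a metric space. The flat hyperk\"ahler vector space $T^*V$ is complete; $\mathcal Z$ is closed in $T^*V$ as the zero-level of the continuous map $\mu_{hk}$, hence itself complete; and the quotient of a complete metric space by an isometric action of a compact group is complete in the quotient metric. By \ref{very_long_proof} together with the assumption $N = N^\circ$, the Feix--Kaledin manifold $Y = T^*N$ embeds isometrically onto $(\mathcal Z \cap T^*V^\circ)/C$, an open subset of $\tilde{Y}'$ contained in the principal stratum $(\tilde{Y}')_{(\{e\})}$. Completeness of $\tilde{Y}'$ then identifies the metric completion $\tilde Y$ with the topological closure $\overline Y$ of $Y$ inside $\tilde{Y}'$.

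Finally I would show that $\overline Y$ is a union of strata $(\tilde{Y}')_{(D)}$, which endows it with the structure of a stratified hyperk\"ahler space. By the frontier condition for the Sjamaar--Lerman--Dancer--Swann stratification, the closure of any stratum is automatically a union of strata, so it suffices to prove that $Y$ is dense in the principal stratum and that, in turn, the principal stratum is dense in $\tilde{Y}'$. The main obstacle is this first density statement: one must approximate a $C$-orbit in $\mathcal Z$ having trivial $C$-stabilizer but not contained in $T^*V^\circ$ by orbits lying in $\mathcal Z \cap T^*V^\circ$. A hyperk\"ahler slice theorem of Dancer--Swann type, combined with the explicit linear descriptions of $\mu_I$ and $\calM$ provided by \ref{_hol_symp_ham_} and \ref{Hilbert_Mumford_ex}, should reduce this to a finite-dimensional linear algebra verification exploiting the quadratic nature of $\mu_{hk}$.
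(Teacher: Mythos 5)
Your overall architecture coincides with the paper's: the paper also realizes $\tilde Y$ inside $X\trice C$ (with $X=T^*V$), invokes \ref{Dancer_Swann} for the stratified hyperk\"ahler structure, and proves completeness of $X\trice C$ exactly as you do (the level set $\mu_{hk}^{-1}(0)$ is closed in the flat complete space $X$, and the quotient of a complete metric space by an isometric compact group action is complete). The difference lies in the last step, and that is where your proposal has a genuine gap. The paper's proof simply asserts that, since $X^\circ=T^*V^\circ$ is open and dense in $X$, the image $Y=X^\circ\trice C$ is open and dense in $X\trice C$, whence the completion of $Y$ is all of $X\trice C$. You correctly identify this density as the real content of the argument --- density of $X^\circ$ in $X$ does not formally give density of $\mu_{hk}^{-1}(0)\cap X^\circ$ in $\mu_{hk}^{-1}(0)$ --- but you then leave it unproven: ``a hyperk\"ahler slice theorem of Dancer--Swann type \dots should reduce this to a finite-dimensional linear algebra verification'' is a plan, not an argument. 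Everything else in your proposal (the Dancer--Swann decomposition, completeness, the frontier condition) is quoted from the literature or routine, so this one unestablished density claim is precisely the step a proof of \ref{comp} must supply; as written, your argument only locates $\tilde Y$ somewhere between $Y$ and the whole quotient $X\trice C$.

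A second, smaller point that both you and the paper pass over: identifying the metric completion of $Y$ (with its intrinsic Riemannian path metric) with the closure of $Y$ in $X\trice C$ requires that this intrinsic distance agree with the distance restricted from $X\trice C$; since minimizing paths in $X\trice C$ may a priori leave $Y$, this again needs the complement of $Y$ to be negligible (e.g.\ a union of lower-dimensional strata), so it is not independent of the density issue. If you carry out the slice-theorem computation you sketch, it would settle both points at once and would in fact make rigorous the one-line density assertion on which the paper's own proof rests.
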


\begin{proof} Consider the hyperk\"ahler quotient $X\trice C$. As $X^\circ:=T^*V^\circ$ is a dense open subset of $X:=T^*V$ we see that $Y = X^\circ\trice C$ embeds into $X\trice C$ as a dense open subset and the embedding preserves stratified hyperk\"ahler structures. It is clear that $X$ is a complete metric space. Hence $X\trice C$ is itself a complete metric space as a quotient of the complete metric space $ \mu^{-1}_{hk}(0)\subset X$ by a compact group $C$. Hence $X\trice C$ is isomorphic to the metric completion of $Y$. \end{proof}

\hfill

\remark In general the space $Y$ is not itself metrically complete. The cases when $Y$ is indeed complete were classified in \cite{Bielawski_Dancer} for $C$ being a torus. By the results of \cite{Bielawski_Dancer} the space $Y = T^*N$ is complete if and only if $N$ is isomorphic to a product of complex projective spaces. The question of completeness will be discussed further in Section~\ref{nef_section}.

\hfill

\subsection{Example: Hirzebruch surfaces}

We shall now illustrate \ref{very_long_proof} and \ref{comp} with a specific example.

\hfill

\begin{definition}
Fix a positive integer $n$. Let the $2$-dimensional torus $ G=\C^\times\times\C^\times$ act on $\C^4$ via
\begin{equation}
\label{hirzebruch}
    (\lambda,\mu)\cdot(x_0, x_1, y_0, y_1) := (\lambda x_0,\lambda x_1,\mu y_0,\mu\lambda^{-n} y_1)
\end{equation}
The quotient of $\C^2\setminus \{0\}\times\C^2\setminus\{0\}\subset \C^4$ by this action is called the {\em Hirzebruch surface} $\Sigma_n$.
\end{definition}

\hfill

Hirzebruch surfaces are ruled surfaces over $\mathbb P^1$. Indeed, consider the map $\Sigma_n\to\P^1$ which sends a point of $\Sigma_n$ represented by a point $(x_0,x_1,y_0,y_1)\in\C^2\setminus \{0\}\times\C^2\setminus\{0\}$ to $[x_0:x_1]\in\mathbb P^1$. One can see that the fibers of this map are isomorphic to $\P^1$.

Our first goal is to represent the Hirzebruch surfaces as K\"ahler quotients. This is done as follows. Consider the action of $C = U(1)\times U(1)$ on $\C^4$ which is the restriction of the action of $\C^\times\times\C^\times$ as in formula~(\ref{hirzebruch}) to $C\subset G$.  By formula~(\ref{_moment_map_unit_rep_}) the moment map $\mu\colon \C^4 \to \c^* = \R^2$ is given by
\begin{equation}
\label{mu}
    \mu(x, y) = -\frac{1}{2}(||x||^2-n|y_1|^2 - c_0, ||y||^2 - c_1)
\end{equation}
where $x = (x_0,x_1)\in \C^2$, $y = (y_0,y_1)\in\C^2$ and $c_0,c_1$ are some real numbers. From now on we assume that $c_0$ and $c_1$ are both positive.

Let us now describe the sets of stable and semistable points of $\C^4$. Recall that by \ref{_description_of_unstable_} they are equal to

\begin{equation}
\label{reminder}
V^{s} =  V\setminus\left(\bigcup\limits_{\<\theta,\xi\> \le 0} V^{\xi\ge 0}\right);\:\:\:\:\:
V^{ss} = V\setminus\left(\bigcup\limits_{\<\theta,\xi\> < 0} V^{\xi\ge 0}\right)
\end{equation}

where the unions are taken over non-zero $\xi = (a,b)\in \c = \R^2$. In our case $\theta\in\c^* = \R^2$ is equal to $\frac{1}{2}(c_0,c_1)$. The inequality $\<\theta,\xi\> = \frac{1}{2}(c_0a+c_1 b)\le 0$ implies that either $a$ or $b$ is negative. The element $\xi = (a,b)$ of the Lie algebra $\c$ acts on $\C^4$ via the linear operator $\sqrt{-1}\diag(a,a,b,b-na)$ by formula~(\ref{hirzebruch}). Table~\ref{zalupa} describes the subspaces $(\C^4)^{\xi\ge 0}$ for all possible $\xi = (a,b)$ such that $\<\theta,\xi\>\le 0$, according to inequalities on $a$ and $b$.

\begin{table}
\caption{Unstable subspaces of $\C^4$}
\label{zalupa}
\begin{tabular}{|c|c|c|c|c|}
    \hline
    Inequalities on $a$ and $b$ & \multicolumn{3}{|c|}{Signs of the eigenvalues of $-\sqrt{-1}\xi$} & $(\C^4)^{\xi\ge 0}$ \\
    &$a$& $b$ & $b-na$ &\\
    \hline
    $a\ge 0 > b$ & $\ge 0$ & $< 0$ & $< 0$ &$\{(x_0,x_1,0,0\}$\\
    \hline
    $b\ge 0 > a$ & $<0$ & $\ge 0$ & $>0$ & $\{(0,0,y_0,y_1)\}$\\
    \hline
    $0>b\ge na$ & $<0$ & $<0$ & $\ge 0$ & $\{(0,0,0,y_1)\}$\\
    \hline
    $0>na>b$ & $<0$ & $<0$ & $<0$ & $\{(0,0,0,0)\}$\\
    \hline
\end{tabular}
\end{table}

One can see from Table~\ref{zalupa} and formula~(\ref{reminder}) that the sets of stable and semistable points coincide and are as follows:
\begin{equation}
\label{kakzheetovse}
    (\C^4)^s = (\C^4)^{ss} = \C^4\setminus (\{(x_0,x_1,0,0)\}\cup \{(0,0,y_0,y_1)\} = \C^2\setminus\{0\}\times\C^2\setminus\{0\}
\end{equation}
\ref{ss} implies that the K\"ahler quotient $\C^4\twice C$ is the Hirzebruch surface $\Sigma_n$.

We notice that the stabilizer of every point of $(\C^4)^s = \C^2\setminus\{0\}\times\C^2\setminus\{0\}$ is trivial. Hence the set $(\C^4)^\circ$ of stable points with trivial stabilizer considered in the proof of \ref{very_long_proof} coincides with $(\C^4)^s$.

If one wants to construct the variety $T^*\Sigma_n$ one should take the hyperk\"ahler quotient $T^*(\C^4)^\circ\trice C$. Our second goal in this subsection is to show that the variety $T^*\Sigma_n = T^*(\C^4)^\circ\twice G $ is strictly contained in the hyperk\"ahler quotient $T^*(\C^4)\trice C$ and to describe the complement of $T^*\Sigma_n$ in $T^*(\C^4)\trice C$.

The action of $G = \C^\times\times\C^\times$ on $T^*\C^4$ is given by
\begin{multline}
\label{joppa}
    (\lambda,\mu)\cdot (x_0,x_1,y_0,y_1,z_0,z_1,w_0,w_1) = \\
    = (\lambda x_0,\lambda x_1,\mu y_0,\mu\lambda^{-n}y_1,\lambda^{-1}z_0,\lambda^{-1}z_1,\mu^{-1}w_0,\mu^{-1}\lambda^nw_1)
\end{multline}

\hfill

We want to describe the sets of stable and semistable points of $T^*\C^4$. This problem is solved analoguously to the problem of description of stable and semistable points of $\C^4$. The element $\xi = (a,b)$ of the Lie algebra $\c$ acts on $T^*\C^4$ via the linear operator $\sqrt{-1}\diag(a,a,b,b-na,-a,-a,-b,na-b)$ by formula~(\ref{joppa}). Table~\ref{krugomhui} describes the subspaces $(T^*\C^4)^{\xi\ge 0}$ for all possible $\xi = (a,b)$ such that $\<\theta,\xi\>\le 0$, according to inequalities on $a$ and $b$.
\begin{table}
\caption{Unstable subspaces of $T^*\C^4$}
\label{krugomhui}
\begin{tabular}{|p{1.67cm}|p{0.55cm}|p{0.55cm}|p{0.55cm}|p{0.55cm}|p{0.55cm}|p{0.55cm}|c|}
    \hline
     Inequalities on $a$ and $b$ &  \multicolumn{6}{|c|}{ Signs of the eigenvalues of $-\sqrt{-1}\xi$} & $(T^*\C^4)^{\xi\ge 0}$\\ 
     & $a$ & $b$ & $b-na$ & $-a$ & $-b$ & $na-b$ &\\
     \hline
     $a>0>b$ & $>0$ & $<0$ & $<0$ & $<0$ & $>0$ & $>0$ & $\{(x_0,x_1,0,0,0,0,w_0,w_1)\}$ \\
     \hline
     $a = 0 >b$ & $=0$ & $<0$ & $<0$ & $= 0$ & $>0$ & $>0$ & $\{(x_0,x_1,0,0,z_0,z_1,w_0,w_1)\}$\\
     \hline
     $b>0>a$ & $<0$ & $>0$ & $>0$ & $>0$ & $<0$ & $<0$ & $\{(0,0,y_0,y_1,z_0,z_1,0,0)\}$\\
     \hline
     $b = 0>a$ & $<0$ & $=0$ & $>0$ & $>0$ & $=0$ & $<0$ & $\{(0,0,y_0,y_1,z_0,z_1,w_0,0)\}$\\
     \hline
     $0>na>b$ & $<0$ & $<0$ & $<0$ & $>0$ & $>0$ & $>0$ & $\{(0,0,0,0,z_0,z_1,w_0,w_1)\}$\\
     \hline
     $0>na=b$ & $<0$ & $<0$ & $= 0$ & $>0$ & $>0$ & $= 0$ & $\{(0,0,0,y_1,z_0,z_1,w_0,w_1)\}$\\
     \hline 
     $0>b>na$ & $<0$ & $<0$ & $>0$ & $>0$ & $>0$ & $<0$ & $\{(0,0,0,y_1,z_0,z_1,w_0,0)\}$\\
     \hline
\end{tabular}
\end{table}

The set of unstable points is the union of vector subspaces $(T^*\C^4)^{\xi\ge 0}$ in the last column of Table~\ref{krugomhui}. One can see from formula~(\ref{reminder}) that the sets of stable and semistable points of $T^*\C^4$ coincide and
\begin{multline}
\label{nadoekhat}
    T^*\C^4\setminus(T^*\C^4)^s = (T^*\C^4)^{us} =\\
    = \{(x_0,x_1,0,0,z_0,z_1,w_0,w_1)\}\cup \{(0,0,y_0,y_1,z_0,z_1,w_0,0)\}\cup \\
    \cup\{(0,0,0,y_1,z_0,z_1,w_0,w_1)\}
\end{multline}
Consider the set $E$ of points of $T^*\C^4$ which are {\em (semi)stable} but whose images in $\C^4$ under the natural projection $p$ to $\C^4$ are {\em not (semi)stable}. Of course, it does not matter if one considers stability or semistability because in our case these notions coincide. By formula~(\ref{kakzheetovse}) and formula~(\ref{nadoekhat}) the set $E$ is {\em non-empty} and is described as follows:
\begin{multline}
\label{navernopotomuchto}
    E = \left(p^{-1}(\C^4)^{us}\right)\setminus (T^*\C^4)^{us} 
    = \\
    = \{(0,0,y_0,y_1,z_0,z_1,w_0,w_1)\:|\: y_0\ne 0 \textrm{ and } w_1\ne 0\}
\end{multline}

Consider the set $Z^s$ of stable points of $T^*\C^4$ on which the holomorphic symplectic map $\calM\colon T^*\C^4\to \g^* = \C^2$ vanishes. The set $Z^s$ contains a dense open subset $Z^\circ = \{x\in Z^s\:|\: p(x)\in (\C^4)^s = (\C^4)^\circ\}$. The quotient of $Z^\circ$ by $G$ is nothing but $T^*\Sigma_n = T^*(\C^4)^\circ\trice C$. The complement of $T^*\Sigma_n$ in the hyperk\"ahler quotient $T^*\C^4\trice C$ is thus isomorphic to the quotient of $Z^s\cap E$ by $G$ as a complex variety.

Let us first describe the variety $Z^s\cap E$. By \ref{_hol_symp_ham_} the moment map $\calM\colon T^*\C^4\to \C^2$ is as follows:
\begin{equation}
\label{vseetomoichuvstva}
    \calM(x_0,x_1,y_0,y_1,z_0,z_1,w_0,w_1) = \sqrt{-1}(x_0z_0 + x_1z_1 - ny_1w_1, y_0w_0 + y_1w_1)
\end{equation}

Consider the restriction of $\calM$ to $E$. It is given as
\begin{equation}
    \calM(0,0,y_0,y_1,z_0,z_1,w_0,w_1) = \sqrt{-1}(-ny_1w_1, y_0w_0 + y_1w_1)
\end{equation}
A point of $E$ thus lies in $Z^s$ if and only if $y_0w_0 = y_1w_1 = 0$. By formula~(\ref{navernopotomuchto})
\begin{equation}
    E\cap Z^s = \{(0,0,y_0,0,z_0,z_1,0,w_1)\:|\: y_0\ne 0 \textrm{ and } w_1\ne 0\}
\end{equation}

To describe the quotient of $E\cap Z^s$ by the action of $G$ we notice that every $G$-orbit of $E\cap Z^s$ intersects with the set $S := \{y_0 = w_1 = 1\}$. Two points $p = (0,0,1,0,z_0,z_1,0,1)$ and $p' = (0,0,1,0,z_0',z_1',0,1)$ of $S$ lie in the same $G$-orbit if and only if $(z_0',z_1') = \lambda (z_0,z_1)$ for some $n$-th root of unity $\lambda$. Therefore 
\begin{equation}
    (E\cap Z^s)/G = S/\mu_n \cong \C^2/\mu_n
\end{equation}

We have proved the following.

\hfill

\begin{proposition}
The manifold $T^*\Sigma_n$ admits the globally defined Feix--Kaledin metric. This metric is non-complete. The metric completion of $T^*\Sigma_n$ is the hyperk\"ahler quotient $(T^*\C^4)\trice C$. The complement of $T^*\Sigma_n$ in $(T^*\C^4)\trice C$ is isomorphic to $\C^2/\mu_n$ as a complex variety. In particular, the underlying complex variety of $(T^*\C^4)\trice C$ is non-smooth and has quotient singularities.
\end{proposition}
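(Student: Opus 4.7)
The plan is to assemble the four assertions from the detailed stability analysis just carried out, combined with \ref{very_long_proof} and \ref{comp}. First, the computation in \eqref{kakzheetovse} shows $(\C^4)^s = (\C^4)^{ss} = (\C^2\setminus\{0\})\times(\C^2\setminus\{0\})$, and a direct inspection of \eqref{hirzebruch} confirms that every stable point has trivial $G$-stabilizer (for instance, any representative with $x_0 y_0 \ne 0$ has trivial stabilizer, and every $G$-orbit meets this locus). Thus $\Sigma_n = (\C^4)^s/G$ coincides with the open K\"ahler stratum $N^\circ$ in the notation of the proof of~\ref{very_long_proof}. Applying \ref{very_long_proof} gives the first claim, and \ref{comp} immediately identifies the metric completion with $(T^*\C^4)\trice C$, yielding the third claim.

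For the complement, by construction $T^*\Sigma_n = Z^\circ/G$, where $Z^\circ$ consists of those points of $Z^s := \calM^{-1}(0) \cap (T^*\C^4)^s$ whose projection lies in $(\C^4)^\circ$. Hence the complement of $T^*\Sigma_n$ in $Z^s/G = (T^*\C^4)\trice C$ is the image of $Z^s \cap E$, with $E$ as in \eqref{navernopotomuchto}. Feeding \eqref{vseetomoichuvstva} into this set forces $w_0 = y_1 = 0$ subject to $y_0, w_1 \ne 0$, so $Z^s \cap E = \{(0,0,y_0,0,z_0,z_1,0,w_1) : y_0 w_1 \ne 0\}$. A slice argument then closes the identification: using \eqref{joppa}, the equations $\mu y_0 = 1$ and $\mu^{-1}\lambda^n w_1 = 1$ admit exactly $n$ solutions $(\lambda,\mu)$ for every $(y_0,w_1) \in (\C^\times)^2$, so every $G$-orbit of $Z^s\cap E$ meets the slice $S = \{y_0 = w_1 = 1\}$, and the residual stabilizer of a point of $S$ is the subgroup $\mu_n \subset G$ generated by $(\lambda,1)$ with $\lambda^n = 1$, which acts on the free coordinates $(z_0,z_1)$ by diagonal multiplication. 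This yields the complex-analytic isomorphism with $\C^2/\mu_n$.

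The second assertion now follows, since $Z^s \cap E$ is non-empty and hence $T^*\Sigma_n$ is a proper open subset of its metric completion. Finally, for $n\ge 2$ the diagonal $\mu_n$-action on $\C^2$ produces the $A_{n-1}$-type quotient singularity at the origin, so $(T^*\C^4)\trice C$ is singular with quotient singularities. The only step requiring genuine care is the slice-and-stabilizer verification via \eqref{joppa}; everything else is direct bookkeeping of results already established in this subsection and in Section~3.
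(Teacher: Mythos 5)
Your argument follows the paper's own derivation essentially step for step: the stability computation \eqref{kakzheetovse} identifying $\Sigma_n$ with the open stratum, \ref{very_long_proof} and \ref{comp} for the global metric and its completion, the identification of the complement with $(E\cap Z^s)/G$, the computation $E\cap Z^s=\{(0,0,y_0,0,z_0,z_1,0,w_1):y_0w_1\ne 0\}$ via \eqref{vseetomoichuvstva}, and the slice $S=\{y_0=w_1=1\}$ with residual group $\mu_n$ acting diagonally on $(z_0,z_1)$. Two small slips in your justifications, neither of which affects the conclusions: first, it is \emph{not} true that every stable $G$-orbit meets $\{x_0y_0\ne 0\}$ --- the orbit of $(0,1,0,1)$ never does --- so you should instead verify directly that every point with $(x_0,x_1)\ne 0$ and $(y_0,y_1)\ne 0$ has trivial stabilizer (immediate from \eqref{hirzebruch}: some $x_i\ne0$ forces $\lambda=1$, and then $y_0\ne0$ or $y_1\ne0$ forces $\mu=1$). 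Second, the \emph{diagonal} $\mu_n$-action on $\C^2$ gives the cyclic quotient singularity of type $\frac{1}{n}(1,1)$ (the cone over the degree-$n$ rational normal curve), which is of type $A_{n-1}$ only when $n=2$; since the proposition only claims a quotient singularity, this mislabelling is harmless.
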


\hfill
\section{Cotangent Bundles as Stein and Affine Varieties}

\subsection{Twisted complex structure on cotangent bundle}

Consider the vector space $X_\R = V_\R\oplus V_\R$ as a real vector space with the complex structure $J$. Recall that
\begin{equation}
\label{j}
J(x,y) = (-y,x)
\end{equation}

We draw the reader's attention to the fact that the (semi)stability of a point $x\in X$ in general does depend on the choice of a complex structure on $X$. In this subsection we will consider points of $X$ which are semistable with respect to the complex structure $J$, the K\"ahler form $\omega_J$ and the moment map $\mu_J = \Re\mathcal M$. We shall call these points {\em J-semistable}.

\hfill

\begin{proposition}
\label{j_semistable}
Every point of $X$ is $J$-semistable with respect to the moment map $\mu_J = \Re \mathcal M$.
\end{proposition}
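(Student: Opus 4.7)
The plan is to reduce the statement to the Hilbert--Mumford criterion \ref{Hilbert_Mumford} applied to the K\"ahler vector space $X_J$: the complexification of $C$ that acts holomorphically on $X_J$ is generated by the vector fields $J\xi$, $\xi\in\mathfrak c$, and its action on the flat space $X_J$ is linear, hence energy complete by \cite{_Teleman_}, Prop.~2.9. It therefore suffices to verify that $w^\xi_{\mu_J}(v)\ge 0$ for every $v\in X$ and every $\xi\in\mathfrak c\setminus\{0\}$.

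Rather than integrating the flow of $J\xi$ by hand, the plan is to exploit the fact that the quadruple $(X,g,J,\omega_J)$ is itself a flat K\"ahler vector space on which $C$ acts unitarily. Indeed, the $C$-action preserves the entire hyperk\"ahler structure $(g,I,J,K)$, so it is both isometric and $J$-linear; hence $X_J$ is a genuine unitary representation of $C$, to which \ref{Hilbert_Mumford_ex} and \ref{fff} apply verbatim, with $J$ playing the role of $\sqrt{-1}$. By \ref{fff}, for every such $\xi$ the weight $w^\xi_{\mu_J}(v)$ equals $+\infty$ when $v$ has a non-zero component in some eigenspace of $\xi$ on $X_J$ with a negative eigenvalue, and otherwise equals $\langle\theta_J,\xi\rangle$, where $\theta_J\in\mathfrak c^*$ is the $\mathrm{Ad}^*$-invariant constant appearing in formula~(\ref{_moment_map_unit_rep_}) for $\mu_J$.

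It then remains only to pin down $\theta_J$. Evaluating~(\ref{_moment_map_unit_rep_}) at $v=0$ yields $\theta_J=\mu_J(0)$, and the defining formula $\langle\mathcal M(x,\alpha),\xi\rangle=\langle\alpha,\xi x\rangle$ from \ref{_hol_symp_ham_} forces $\mathcal M$ to vanish on the whole zero section of $T^*V$; in particular $\mu_J(0)=\Re\mathcal M(0)=0$, so $\theta_J=0$. Consequently $w^\xi_{\mu_J}(v)\in\{0,+\infty\}$ for every $v$ and every $\xi$, which is manifestly non-negative, so the semistability criterion is satisfied. I do not foresee a serious technical obstacle here: the one delicate input is the observation that the normalisation of $\mathcal M$ built into \ref{_hol_symp_ham_} makes the shift $\theta_J$ vanish, which is exactly what is needed to obtain non-negativity at every point rather than only on a dense open subset.
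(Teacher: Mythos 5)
Your argument is correct, and although it runs on the same engine as the paper's proof --- the Hilbert--Mumford criterion \ref{Hilbert_Mumford} applied to the $J$-holomorphic linear action of $G$ --- the execution is genuinely different. The paper computes the weights $w_J^\xi(x,y)$ from scratch: it integrates the flow $\exp(tJ\xi_X)$ into an explicit $\cosh/\sinh$ matrix, decomposes $(x,y)$ into $I$-eigenvectors of $\xi$ on $V$, and evaluates $\lim_{t\to\infty}\<\xi x(t),y(t)\>$ term by term, finding that each summand tends to $0$ or to $+\infty$. You instead observe that $(X,g,J)$ is itself a flat unitary representation of $C$ (the action is $g$-isometric and commutes with $J$), so the general weight formula \ref{fff} of \ref{Hilbert_Mumford_ex} applies verbatim with $J$ in place of $\sqrt{-1}$, and the whole proposition collapses to the single identity $\theta_J=\mu_J(0)=\Re\calM(0)=0$, which is immediate from formula~(\ref{_moment_map_cot_}). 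This is shorter and isolates the real reason the statement holds --- the normalisation of $\calM$ built into \ref{_hol_symp_ham_} kills the constant term, so every weight lies in $\{0,+\infty\}$ --- at the cost of not exhibiting which points actually have weight zero (the paper's condition $x_i=\mathrm{sgn}(\lambda_i)Iy_i$), information the paper does not use later anyway. Two small points to make explicit in a final write-up: first, that $\mu_J=\Re\calM$ is genuinely a moment map for the $C$-action on $(X,\omega_J)$ (this is already built into the hyperk\"ahler moment map $\mu_{hk}=(\mu_I,\Re\calM,\Im\calM)$, so it is available, but \ref{fff} is being invoked for that specific normalisation); second, that the $J$-holomorphic extension of the $C$-action to $G$ used in \ref{Hilbert_Mumford} is the linear one, so energy completeness via Teleman applies exactly as you say.
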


\begin{proof} We shall prove the statement using the Hilbert-Mumford criterion i.e. by computing the $\mu$-weights of points of $X$ with respect to the complex structure $J$ (\ref{mu_weight_def}). For a point $(x,y)\in X = V_\R\oplus V_\R$ and $\xi\in\c$ we shall denote the corresponding $\mu$-weight by $w_J^\xi(x,y)$. Let $\xi\in \c$ be a skew-symmetric operator acting on $V$. Then the matrix of its action on $X$ is
$$
\xi_X = \begin{pmatrix}
\xi & 0\\
0 & \xi
\end{pmatrix}
$$
and
$$
J\xi_X = \begin{pmatrix}
0 & -\xi\\
\xi & 0
\end{pmatrix}
$$
The exponent of $J\xi_X$ is given by
\begin{equation}
\label{exp}
    \exp(tJ\xi_X) = 
    \begin{pmatrix}\cos t\xi & -\sin t\xi\\ \sin t\xi & \cos t\xi\end{pmatrix}
\end{equation}
Now let $\sqrt{-1}\lambda_i$ be (imaginary) eigenvalues of the operator $\xi$ as in \ref{Hilbert_Mumford_ex}. We can decompose every vector $x\in V$ into the sum $x = \sum x_i$ of eigenvectors of $\xi$. If we consider $V$ as a real vector space with the complex structure $I$ then
$$
\xi x_i = \lambda_i Ix_i
$$
hence 
\begin{equation}
\label{zzz}
\cos(t\xi) x_i= \cosh(t\lambda_i) x_i, \:\:\:\:\: \sin(t\xi) x_i = \sinh(t\lambda_i) Ix_i
\end{equation}
Let
\begin{equation}
    (x(t),y(t)):= \exp(tJ\xi_X)(x,y)
\end{equation}
Then due to formulae~(\ref{exp}) and (\ref{zzz})
\begin{gather}
    x(t) = \sum\limits_i \cosh(t\lambda_i) x_i - I\sum\limits_i \sinh(t\lambda_i) y_i\\ 
    y(t) = I \sum\limits_i \sinh(t\lambda_i) x_i + \sum\limits_i \cosh(t\lambda_i) y_i
\end{gather}

By \ref{mu_weight_def} and formula~(\ref{_moment_map_cot_}) the $\mu$-weights are equal to
\begin{equation}
\label{weights_a}
w_J^\xi(x,y) = \lim\limits_{t\to\infty}\<\xi x(t),y(t)\>
\end{equation}
We now use formula~(\ref{weights_a}) to compute them explicitly. The vector $\xi x(t)$ is equal to
\begin{gather*}
    \xi x(t) = I\sum\limits_i\lambda_i\cosh(t\lambda_i) x_i + \sum\limits_i\lambda_i\sinh(t\lambda_i)y_i
\end{gather*}
Hence the scalar product of $\xi x(t)$ with the vector $y(t)$ equals
\begin{gather*}
    \sum\limits_i \lambda_i \left[\frac{1}{2}\sinh (2t\lambda_i)(||x_i||^2 + ||y_i||^2) + \cosh(2t\lambda_i)\omega(x_i,y_i)\right]
\end{gather*}

Computing the limit of the sum is an exercise in a real calculus. As long as $\lambda_i\ne 0$ and at least one of the vectors $x_i$ and $y_i$ is non-zero, the $i$-th summand tends either to $+\infty$ or to $0$. The last case occurs if and only if $x_i =\mathrm{sgn}(\lambda_i) Iy_i$. In particular, $w_J^\xi(x,y)$ is always non-negative. By Hilbert-Mumford criterion (\ref{Hilbert_Mumford}) every point of $X$ is $J$-semistable. 
\end{proof}

\hfill

As a corollary of the proposition above we obtain the following

\hfill

\begin{theorem}
\label{affine}
Let $\tilde{Y} = X\trice C$ be a hyperk\"ahler quotient of $X = T^*V$. Let $J\in\mathbb H$ be a complex structure on $\tilde{Y}$ different from $I$ and $-I$. Then the complex analytic variety $\tilde{Y}_J$ is an affine algebraic variety. Moreover, for every two complex structures $J_1$ and $J_2$ different from $\pm I$ the varieties $\tilde{Y}_{J_1}$ and $\tilde{Y}_{J_2}$ are isomorphic as algebraic varieties. 
\end{theorem}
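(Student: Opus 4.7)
The plan is to combine the $J$-semistability established in \ref{j_semistable} with the K\"ahler-to-GIT principle \ref{ss} in order to present $\tilde{Y}_J$ as a good algebraic quotient of an affine variety by a complex reductive group. First I focus on the standard $J$ of the fixed quaternionic triple $(I,J,K)$. Since the compact group $C$ preserves the entire hyperk\"ahler structure, it acts $\C$-linearly on the complex vector space $X_J$; consequently its complexification $G_J=C_\C$ acts algebraically (linearly) on $X_J$, viewed as a complex affine space. In complex structure $J$ the holomorphic symplectic form is (up to sign conventions) $\Omega_J=\omega_K+\sqrt{-1}\omega_I$, and the corresponding holomorphic moment map in the sense of \ref{hol_symp_moment_map} is $\mathcal M_J=\mu_K+\sqrt{-1}\mu_I$, a polynomial on $X_J$. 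Set $Z_J:=\mathcal M_J^{-1}(0)\subset X_J$; it is a closed affine algebraic $G_J$-invariant subvariety. The zero set of the hyperk\"ahler moment map decomposes as $\mu_{hk}^{-1}(0)=Z_J\cap\mu_J^{-1}(0)$, so $\tilde{Y}_J$ is nothing but the K\"ahler quotient of $Z_J$ by $C$ with respect to $\mu_J|_{Z_J}$. By \ref{ss} this K\"ahler quotient is isomorphic as a complex analytic variety to the good quotient $Z_J^{ss,J}/G_J$; and by \ref{j_semistable} every point of $X_J$, and therefore of $Z_J$, is $J$-semistable, so $Z_J^{ss,J}=Z_J$ and $\tilde{Y}_J\cong Z_J/G_J$. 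By \ref{_existence_of_quotient_affine_}(i) this quotient exists as an affine algebraic variety, and the algebraic/analytic comparison for good quotients recalled in Section~2.3 endows $\tilde{Y}_J$ with the desired affine algebraic structure.

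For the ``moreover'' part I use the HKLR-compatible $U(1)$-action on $\tilde{Y}$ produced via \ref{lemma} from the fiberwise scaling on $T^*V$, together with the completeness of $\tilde{Y}$ established in \ref{comp}. The remark preceding \ref{yyy} recalls from \cite{HKLR} that on a complete hyperk\"ahler manifold carrying an HKLR-compatible $U(1)$-action, this circle action extends to a $\C^\times$-action on the twistor space whose induced action on $\P^1$ fixes the two points $[\pm I]$ and is transitive on the complement. The resulting isomorphisms between twistor fibers are algebraic by construction and yield $\tilde{Y}_{J_1}\cong\tilde{Y}_{J_2}$ for all $J_1,J_2\in\H\setminus\{\pm I\}$; in particular this propagates the affine algebraic structure from the standard $J$ to every $J\in\H\setminus\{\pm I\}$, giving both halves of the theorem simultaneously.

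The main delicate point will be the bookkeeping in the K\"ahler-to-GIT identification: one must verify that the complex analytic isomorphism $\tilde{Y}_J\cong Z_J/G_J$ of \ref{ss} refines to an isomorphism of affine algebraic varieties, and that the twistor $\C^\times$-isomorphisms used in the second part are isomorphisms of algebraic, not merely complex analytic, varieties. Both points should follow from functoriality of the analytification applied to the affine good quotient $Z_J/G_J$ together with the algebraic nature of the twistor $\C^\times$-action on a neighbourhood of each fiber, but spelling this out carefully is where the argument requires the most care.
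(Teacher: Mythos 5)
Your first half is essentially the paper's Step 1: you identify $\mathcal M_J=\mu_K+\sqrt{-1}\mu_I$ as the $J$-holomorphic moment map, use \ref{j_semistable} to see that every point of the closed affine $G$-invariant subvariety $Z_J=\mathcal M_J^{-1}(0)$ is $J$-semistable, and then apply \ref{ss} and \ref{_existence_of_quotient_affine_} to get $\tilde Y_J\cong Z_J/G$ as an affine variety. This matches the paper exactly (the paper also records the small verification that $\Omega_J$ is preserved by the $J$-holomorphic $G$-action, so that $\mathcal M_J$ is indeed a moment map in the sense of \ref{hol_symp_moment_map}).

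For the ``moreover'' part your route diverges from the paper's, and this is where the gap is. You invoke the HKLR twistor-space $\C^\times$-action on the \emph{quotient} $\tilde Y$. Two problems: first, $\tilde Y=X\trice C$ is in general a singular stratified space (cf.\ \ref{comp} and the Hirzebruch example), whereas the HKLR statement you cite is for complete hyperk\"ahler \emph{manifolds}; second, and more seriously, the twistor construction only hands you \emph{biholomorphisms} between the fibers over $J_1$ and $J_2$, and upgrading these to isomorphisms of algebraic varieties is precisely the content to be proved --- your proposed fix (``functoriality of analytification together with the algebraic nature of the twistor $\C^\times$-action'') presupposes the algebraicity you are trying to establish. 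The paper avoids both issues by working upstairs on $X=V\oplus V^*$: the fiberwise scaling $\C^\times$-action on $X$ is \emph{linear}, acts transitively on the complex structures $J\ne\pm I$ ($\lambda_*J_1=J_2$ for suitable $\lambda$), commutes with $C$, and satisfies $\lambda^*\calM_2=\calM_1$; hence $\lambda$ restricts to a manifestly algebraic $G$-equivariant isomorphism $\calM_1^{-1}(0)\to\calM_2^{-1}(0)$ and descends to an algebraic isomorphism of the affine quotients $\calM_i^{-1}(0)/G$. This simultaneously propagates the affine structure from the standard $J$ to every $J\ne\pm I$ and proves the isomorphism statement, with no appeal to the twistor space of the (possibly singular) quotient. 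You should replace your twistor argument by this upstairs linear one.
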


\pstep Let $J$ be the complex structure on the vector space $X$ as defined in formula~(\ref{j}). The action of $C$ on $X$ preserves $J$ and can be uniquely extended to the $J$-holomorphic action of the complexified group $G$ on $X_J$ as in formula~(\ref{exp}). Consider the holomorphic symplectic form $\Omega_J = \omega_K + \sqrt{-1}\omega_I \in \Lambda^{2,0}_J X$. It is preserved by the action of $C$ and hence by the $J$-holomorphic action of $G$. Indeed, for every $\xi\in\c$
\begin{equation}
    \mathsf L_{J\xi} \Omega_J = d(J\xi\cntrct\Omega_J) = \sqrt{-1}d(\xi\cntrct\Omega_J) = \sqrt{-1}\mathsf L_\xi\Omega_J = 0
\end{equation}
as $\Omega_J$ is closed.

Consider the $J$-holomorphic moment map $\mathcal M_J\colon X \to \g^*$. Explicitly, for every $\xi\in\c$
\begin{equation}
    \<\mathcal M_J,\xi\> = \<\Im \mathcal M, \xi\> + \sqrt{-1}\<\mu_I,\xi\>
\end{equation}
where $\mu_I\colon X\to\c^*$ is the moment for the action of $C$ with respect to the complex structure $I$.

By \ref{j_semistable} every point of the closed algebraic $G$-invariant subvariety $\mathcal M_J^{-1}(0)$ is $J$-semistable. Hence by Proposition~\ref{ss} 
\begin{equation}
    \tilde{Y}_J = \mathcal M_J^{-1}(0)\twice_{\mu_J} C = \mathcal M_J^{-1}(0)/G
\end{equation}
As $\mathcal M_J^{-1}(0)$ is affine its good quotient $\tilde{Y}_J$ is again affine by \ref{_existence_of_quotient_affine_}.

{\bf Step 2:} Consider the action of $\C^\times$ on $X$ by the fiberwise multiplication. For every two quaternionic complex structures $J_1,J_2$ not equal to $\pm I$ there exists $\lambda\in\C^\times$ such that $\lambda_* J_1 = J_2$ (\cite{HKLR}). Hence $\lambda$ is a complex linear $C$-equivariant isomorphism between $(X,J_1)$ and $(X,J_2)$. Moreover, $\lambda$ is an isomorphism of holomorphic symplectic vector spaces. If $\calM_i$ denotes the holomorphic symplectic moment map on $X_{J_i}$ then one can check that $\lambda^*\calM_2 = \calM_1$ (\cite{HKLR}). As a consequence $\lambda$ induces an algebraic $G$-eqivariant isomorphism of $\tilde{Y}_{J_i} = \calM_i^{-1}(0)/G$,   $i=1,2$.
\endproof{}

\hfill


\subsection{Stein structure}

Throughout this subsection we assume that the hyperk\"ahler quotient $\tilde{Y} = X\trice C$ is smooth. We present here another approach to \ref{affine}. This approach does not let us to prove that $\tilde{Y}_J$ is affine but only that it is Stein. However, it is interesting in its own way and possibly can be applied in a more general situation.

\hfill

\begin{definition} (\cite{Demailly}, {\sf Ch. I \S6, Ch. VII \S9})
\label{Stein_def}
Let $M$ be a complex manifold. It is called {\em Stein} if one of the following equivalent conditions holds:
\begin{itemize}
    \item[(i)] $M$ admits a closed holomorphic embedding to an affine complex space.
    \item[(ii)] There exists a smooth proper strictly plurisubharmonic function $\rho\colon M \to\R_{\ge 0}$ (see the definition below).
\end{itemize}
\end{definition}

\hfill

\begin{definition}
\label{plurisubharmonic}
An $\R$-valued $C^2$-function on a complex manifold $M$ is called {\em strictly plurisubharmonic} if $dd^c\rho$ is a strictly positive $(1,1)$-form.
\end{definition}

\hfill

\remark A (possibly singular) {\em Stein space} can be defined as a complex space admitting a closed embedding into $\C^N$. An equivalent definition in terms of strictly plurisubharmonic functions also does exist (\cite{_Fornaess_Narasimhan_}). However, defining strictly plurisubharmonic functions in terms of the $\overline{\partial}$-operator seems to be a subtle question. Its discussion will lead us too far beyond our topic. That's why we prefer to impose a rather restrictive smoothness assumption in our exposition.

\hfill

\remark Affine varieties are Stein. The converse is of course not true. Even more, there exist examples of Stein algebraic varieties which are not affine (\cite{Hartshorne}, {\sf Ch. VI, Ex. 3.2}).

\hfill

Following \cite{HKLR} we are going to construct a proper strictly plurisubharmonic function on $\tilde{Y}_J$.

\hfill

\begin{lemma}
\label{stages}
The $U(1)$-action on $\tilde{Y}_I$ is Hamiltonian.
\end{lemma}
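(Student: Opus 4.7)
The plan is to descend the obvious $U(1)$ moment map from the total space $X = T^*V$. Identify $X_I \cong V \oplus V^*$ via the Hermitian structure, and write $\omega_I$ for the flat K\"ahler form. The $U(1)$-action by fiberwise multiplication acts by scalars on the second summand, so it is the standard diagonal $U(1)$-rotation on a Hermitian vector space and admits the moment map
\begin{equation}
\psi_X(v,\alpha) = \tfrac{1}{2}\|\alpha\|^2,
\end{equation}
where $\|\cdot\|$ is the dual Hermitian norm on $V^*$. Since $C$ acts on $V$ by unitary transformations, it preserves this norm, so $\psi_X$ is manifestly $C$-invariant; in particular the $U(1)$- and $C$-actions on $X$ commute.

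Next I descend $\psi_X$ to $\tilde Y$. The zero set $\mu_{hk}^{-1}(0)$ is $U(1)$-invariant by \ref{lemma}, and by the commutativity of the two actions it is also $C$-stable in the obvious way. Restricting $\psi_X$ to $\mu_{hk}^{-1}(0)$ therefore gives a $C$-invariant smooth function on a $C$-invariant smooth submanifold (smoothness uses the standing hypothesis of this subsection), so it descends to a smooth function $\psi \colon \tilde Y_I \to \R_{\ge 0}$.

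It remains to check the moment map equation $d\psi = \tilde\phi\cntrct\tilde\omega_I$, where $\tilde\phi$ is the fundamental vector field of the induced $U(1)$-action on $\tilde Y$. Denote by $\phi_X$ the fundamental vector field of $U(1)$ on $X$; clearly $\phi_X$ projects to $\tilde\phi$. Fix $x\in\mu_{hk}^{-1}(0)$ projecting to $\tilde x\in\tilde Y_I$ and a horizontal lift $v\in T_x\mu_{hk}^{-1}(0)$ of a tangent vector $\tilde v\in T_{\tilde x}\tilde Y_I$. Since $\psi_X$ is $C$-invariant, $d\psi(\tilde v) = d\psi_X(v) = \omega_I(\phi_X,v)$. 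For every $\xi\in\c$ and every $v$ tangent to $\mu_{hk}^{-1}(0)\subset \mu_I^{-1}(0)$ one has $\omega_I(L(\xi)_x,v) = d\<\mu_I,\xi\>(v) = 0$, hence $\phi_X$ and its horizontal component have the same $\omega_I$-pairing with such $v$. Combining this with the defining property of $\tilde\omega_I$ yields
\begin{equation}
d\psi(\tilde v) = \omega_I(\phi_X,v) = \tilde\omega_I(\tilde\phi,\tilde v),
\end{equation}
which is the moment map equation. The $C$-equivariance of $\psi$ is automatic since $C$ acts trivially on $\tilde Y_I$, so $\psi$ is a genuine moment map and the $U(1)$-action on $\tilde Y_I$ is Hamiltonian. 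The main subtlety in the argument is the descent and smoothness of $\psi$, which is resolved by the standing smoothness assumption on $\tilde Y$; everything else is a one-line horizontal-lifting calculation.
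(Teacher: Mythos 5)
Your argument is correct and is essentially the paper's proof with the details filled in: the paper simply observes that the linear $U(1)$-action on $X$ is Hamiltonian and commutes with $C$, then cites reduction in stages (Cannas da Silva), whereas you carry out that descent and the horizontal-lift verification explicitly. The only caveat is a sign: with the paper's conventions the $U(1)$-moment map on $X$ is $-\tfrac12\|\alpha\|^2$ rather than $+\tfrac12\|\alpha\|^2$ (cf.\ Step~2 of the proof of \ref{stein}), which is immaterial for the Hamiltonian property asserted here but matters later when $-\psi$ is required to be proper, non-negative and plurisubharmonic.
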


\begin{proof} Being linear, the $U(1)$-action on $X$  is Hamiltonian. In addition, it commutes with $C$. Hence by the technique of reduction in stages (\cite{Cannas_da_Silva}, Part IX \S 24.3) the $U(1)$-action stays Hamiltonian after taking the quotient. 
\end{proof}

\hfill

\begin{proposition}
\label{stein}
Suppose that the K\"ahler quotient $N = V\twice C$ is smooth and compact. Let $\psi$ be a moment map for the $U(1)$ action on $\tilde{Y}$. Then $-\psi$ is a proper strictly plurisubharmonic function on $\tilde{Y}_J$.
\end{proposition}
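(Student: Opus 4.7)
The plan is to identify $\psi$ explicitly, then establish strict plurisubharmonicity by an HKLR identity, and finally establish properness using compactness of $N$.

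\textbf{Step 1 (explicit form of $\psi$).} By reduction in stages (the idea behind \ref{stages}), the $U(1)$-moment map $\psi$ on $\tilde{Y}$ is the descent of the moment map of the fiberwise $U(1)$-action on $X = T^*V$ to the quotient by $C$. Writing $X = V \oplus V^*$, with generator $\phi_{(v,w)} = (0, \sqrt{-1}\, w)$, the defining equation $d\psi = \phi \cntrct \omega_I$ together with the flat K\"ahler form $\omega_I$ on $V \oplus V^*$ identifies $\psi(v, w) = -\frac{1}{2}\|w\|^2$ after normalising the additive constant so that $\psi$ vanishes on the zero section. Since $C$ acts on $V^*$ by isometries, $-\psi([v, w]) = \frac{1}{2}\|w\|^2$ descends to a well-defined non-negative function on $\tilde{Y} = \mu_{hk}^{-1}(0)/C$.

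\textbf{Step 2 (plurisubharmonicity).} I would compute $dd^c_J(-\psi)$ and show that it equals $\omega_J$, the K\"ahler form of the hyperk\"ahler metric in the complex structure $J$, hence a strictly positive $(1,1)$-form on $\tilde{Y}_J$. Using the convention $d^c f(v) := -df(Jv)$ together with the quaternionic identities $IJ = K$, $JI = -K$ and the fact that $J$ is a $g$-isometry, the equation $d\psi = \phi \cntrct \omega_I$ rewrites as $d^c_J \psi = -\phi \cntrct \omega_K$. Applying $d$ and using Cartan's formula with $d\omega_K = 0$ gives $dd^c_J \psi = -\mathsf L_\phi \omega_K$. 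The HKLR-compatibility relation $\mathsf L_\phi \Omega = \sqrt{-1}\,\Omega$ with $\Omega = \omega_J + \sqrt{-1}\, \omega_K$ separates into the two identities $\mathsf L_\phi \omega_J = -\omega_K$ and $\mathsf L_\phi \omega_K = \omega_J$, whence $dd^c_J(-\psi) = \omega_J$.

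\textbf{Step 3 (properness).} The compactness of $\{-\psi \le c\}$ in $\tilde{Y}$ is equivalent, since $C$ is compact and acts by isometries on $V \oplus V^*$, to boundedness of the preimage $\mu_{hk}^{-1}(0) \cap \{\|w\|^2 \le 2c\}$ inside $V \oplus V^*$. The $V^*$-component is bounded by hypothesis. For the $V$-component, the vanishing of the $\omega_I$-moment map on $X$ combined with \ref{_direct_sum_} gives $\mu_I^V(v) = -\mu_I^{V^*}(w)$, a bounded element of $\c^*$. The heart of the argument is then the claim that compactness of $N$ forces $\mu_I^V\colon V \to \c^*$ to have bounded sublevel sets.

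\textbf{The hard part} will be establishing this last claim. I would prove it by a classical scaling argument. Write $\mu_I^V = q + \theta$ with $q$ a $\c^*$-valued quadratic form. Suppose for contradiction that $\|v_n\| \to \infty$ while $\mu_I^V(v_n)$ remains bounded; normalising $u_n = v_n/\|v_n\|$ and passing to a subsequence produces a unit vector $u$ with $q(u) = 0$. Using the polarisation $A(\cdot,\cdot)$ of $q$, the identity $\mu_I^V(tu + w) = tA(u,w) + \mu_I^V(w)$ shows that if $u$ is fixed by $C$ then $A(u,\cdot) \equiv 0$ and $\{tu + w : t \in \R\}$ lies in $\mu_I^{-1}(0)$ for any $w \in \mu_I^{-1}(0)$; in the non-fixed case one finds $w \in \mu_I^{-1}(0)$ with $A(u,w)=0$ by a genericity argument on the bounded set $\mu_I^{-1}(0)$. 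Either outcome produces an unbounded line inside $\mu_I^{-1}(0)$, contradicting the compactness of $N$.
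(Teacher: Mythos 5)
Your Steps 1 and 2, and the first reduction in Step 3, reproduce the paper's argument: the explicit form $\Psi(x,y)=-\tfrac12\|y\|^2$ on $X=V\oplus V^*$, the HKLR computation $dd^c_J(-\psi)=\omega_J$ (the paper writes it as $dd^c_J\psi=d(\phi\cntrct\omega_K)=\mathsf L_\phi\omega_K=-\omega_J$; the sign bookkeeping differs only by convention), and the observation that on $\mu_{hk}^{-1}(0)\cap\{\|y\|^2\le 2r\}$ the $V$-component of $\mu_I$ is bounded, so that everything reduces to properness of the moment map $\mu\colon V\to\c^*$. Up to that point the proposal is sound and is essentially the paper's proof.

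The gap is in your ``hard part,'' i.e.\ the claim that compactness of $\mu^{-1}(0)$ forces $\mu\colon V\to\c^*$ to be proper. Your scaling step is fine: it produces a unit vector $u$ with $q(u)=0$, where $q$ is the quadratic part of $\mu$, and the case where $u$ is fixed by $\c$ does work (infinitesimal invariance of $\omega$ gives $A(u,\cdot)\equiv 0$, so $\mu^{-1}(0)+\R u\subset\mu^{-1}(0)$). But the non-fixed case is not established. You need a point $w\in\mu^{-1}(0)$ lying in the kernel of the linear map $A(u,\cdot)\colon V\to\c^*$, which is a proper linear subspace of codimension up to $\dim\c$. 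A ``genericity argument'' points in exactly the wrong direction: generic points of a set \emph{avoid} a proper linear subspace rather than land on it, and $\mu^{-1}(0)$ is itself only a codimension-$(\dim\c)$ compact subset, so there is no dimension count or transversality statement that guarantees the intersection is non-empty. Without such a $w$ the null ray $\R u$ only shows that the fiber $\mu^{-1}(\theta)$ is unbounded, which says nothing about $\mu^{-1}(0)=q^{-1}(-\theta)$ and yields no contradiction with compactness of $N$.

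The paper closes this exactly where your argument stalls, by degenerating along one-parameter subgroups of $G$ applied to points of $\mu^{-1}(0)$ rather than along straight lines in $V$ (this is Sjamaar's Lemma 4.10). Concretely: the saturation $G\cdot\mu^{-1}(0)$ is dense in $V$, so an unbounded sequence may be taken of the form $u_n=\exp(\sqrt{-1}\xi_n)v_n$ with $\xi_n\in\mathfrak t$ and $\mu(v_n)=0$ (Cartan decomposition $G=C\exp(\sqrt{-1}\mathfrak t)C$); decomposing $v_n$ into $\mathfrak t$-weight spaces, compactness of $\mu^{-1}(0)$ bounds the coefficients $|v_n^i|$, which forces $\beta^i(\xi_n)\to-\infty$ on the set of indices responsible for the blow-up of $\|u_n\|$, and pairing $\mu(u_n)$ with a suitable $\eta\in\mathfrak t$ then shows $\|\mu(u_n)\|\to\infty$. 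The advantage of this route is that along $t\mapsto\exp(\sqrt{-1}t\xi)v$ the function $\langle\mu,\xi\rangle$ is monotone/convex by the very definition of a moment map, which is the structural input your straight-line degeneration does not see. To repair your write-up you should either import this argument (or cite Sjamaar's lemma directly) in place of the translation-of-the-null-ray step.
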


\pstep Let $X$ be an arbitrary hyperk\"ahler manifold equipped with a Hamiltonian $U(1)$-action rotating the complex structures. Denote by $\phi$ the vector field tangent to the $U(1)$ action. Then
\begin{gather}
    \mathsf L_\phi\omega_I = 0\\
    \mathsf L_\phi\omega_J = \omega_K\\
    \mathsf L_\phi\omega_K = -\omega_J
\end{gather}
The moment map $\psi\colon X\to\R$ satisfies
\begin{equation}
    d\psi = \phi\cntrct\omega_I
\end{equation}
As in \cite{HKLR} we compute
\begin{gather*}
    dd^c_J\psi = dJd\psi = d J (\phi\cntrct\omega_I) = d((J\phi)\cntrct\omega_I) = d(\phi\cntrct \omega_K) = \mathsf L_\phi\omega_K = -\omega_J
\end{gather*}
Hence $-\psi$ is a K\"ahler potential for $X_J$, in particular, this function is strictly plurisubharmonic.

{\bf Step 2:} The moment map $\Psi\colon X\to \R$ for the $U(1)$-action on $X = V_\R\oplus V_\R$ is given by
$$
\Psi(x,y) = -\frac{1}{2}||y||^2
$$
It is a non-positive function. By \ref{stages} it descends to a moment map $\psi$ on $\tilde{Y}$. Hence the function $\rho:=-\psi$ is non-negative and strictly plurisubharmonic. Now we need to check that $\rho$ is proper. 

Fix a number $r\in\R$, $r>0$ and consider the closed set $\rho^{-1}([0,r])\subset \tilde{Y}$. It is the quotient by a compact group of the following closed subset of $X$ 
$$
K_r:=\{(x,y)\in X|\:\mu_{hk}(x,y) = 0, -\Psi(x,y)\le r\}
$$
In particular, for $(x,y)\in K_r\subset V_\R\oplus V_\R$
\begin{gather}
    \mu_I(x,y) = 0
\end{gather}
and $||y||^2\le 2r$. By \ref{_direct_sum_} for every $(x,y)\in K_r$ and $\xi\in \c$
\begin{equation*}
    |\<\mu(x),\xi\>| = \left|\frac{\omega(x,\xi x)}{2} + \<\theta,\xi\>\right| = \left|\frac{\omega(y,\xi y)}{2}\right|\le \frac{1}{2}||\xi||_V||y||^2\le r||\xi||_V
\end{equation*}
where $||\xi||_V$ is the norm of $\xi$ as an operator on $V$. Hence the claim will follow if we prove that the moment map $\mu\colon V\to\c^*$ is proper.

{\bf Step 3:} The properness of $\mu$ follows straightforwardly from \cite{Sjamaar_convexity}, Lemma 4.10. For the sake of completeness we include the proof here.

Suppose $\{u_n\}$ is an unbounded sequence of vectors of $V$. We want to check that the sequence $\{\mu(u_n)\}$ is also unbounded. By passing to a subsequence we may assume that $\lim\limits_{n\to\infty}||u_n||$ exists and is equal to $+\infty$. The subset of points of the form $gv$ where $g\in G$ and $\mu(v) = 0$ is dense in $V$, hence without loss of generality we may and shall assume that the points $u_n = g_n v_n$ are of this form.

Fix a maximal torus $T\subset C$ with the Lie algebra $\mathfrak t$. The reductive group $G$ admits the polar decomposition $G = C\exp(\sqrt{-1}\mathfrak t) C$. One may write $g_n = k_n\exp(\sqrt{-1}\xi_n)h_n$ where $k_n, h_n\in C$ and $\xi_n\in \mathfrak t$. As the subset $\mu^{-1}(0)$ is $C$-invariant one may assume that $h_n=1$ by changing $v_n$ if necessary. As the action of $C$ on $V$ changes neither the norm of a vector nor the norm of its image under $\mu$ one may assume that $k_n$ is also the unit element. Hence we are reduced to the case when $u_n = \exp(\sqrt{-1}\xi_n)v_n$ where $\xi_n\in \mathfrak t$ and $\mu(v_n)=0$.

Take any orthonormal basis $\{e^i\}$ of $V$ in which the action of $T$ is diagonalized. Then there exist $\beta^i\in \mathfrak t^*$ such that for every $\xi\in\mathfrak t$
$$
\xi\cdot e^i = \sqrt{-1}\beta^i(\xi)e^i
$$
Decompose $v_n = \sum\limits_i v_n^i e^i$ with respect to this basis. Then
$$
||g_n v_n||^2 = \sum\limits_i\exp(-2\beta^i(\xi_n))|v_n^i|^2
$$
As by our assumptions the sequence $||g_nv_n||^2$ is unbounded, the subset of indices $A:= \{i\: |\: \text{the sequence } \exp(-2\beta^i(\xi_n))|v_n^i|^2 \text{ is unbounded }\}$ is non-empty. By passing to a subsequence if necessary one may assume that $\forall i\in A$ the limit $\lim\limits_{n\to\infty} \exp(-2\beta^i(\xi_n))|v_n^i|^2$ exists and is equal to $+\infty$. The set $A$ might become smaller but it will remain non-empty.

We assumed that the quotient $V\twice C$ is compact. That is equivalent to the compactness of $\mu^{-1}(0)$. Hence $|v_n^i|^2$ is bounded $\forall i$. Consequently, $\beta^i(\xi_n)$ tends to $-\infty$ for every $i\in A$. In particular $\exists \eta\in\mathfrak t$ of unit norm such that $\forall i\in A$ 
$$
\beta^i(\eta) < 0
$$
Now one can estimate
$$
    ||\mu(g_nv_n)||\ge |\<\mu(g_n v_n),\eta\>| = \left|\frac{1}{2}\sum\limits_i\beta^i(\eta)\exp(-\beta^i(\xi_n))|v_n^i|^2 + \<\theta,\eta\>\right|
$$
for some $\theta\in\c^*$ depending only on the choice of $\mu$. By the choice of $\eta$ every summand in $\sum\limits_i\beta^i(\eta)\exp(-\beta^i(\xi_n))|v_n^i|^2$ is either bounded or tends to minus infinity. Hence $\mu(g_nv_n)$ is unbounded.\endproof{}

\hfill

\subsection{Big and nef tangent bundles}
\label{nef_section}

Let $N$ be a K\"ahler manifold and $E$ a vector bundle over $N$. The space of all hyperplanes in $E$ will be called the {\em projectivization} of $E$ and denoted by $\mathbb P(E)$. Clearly the manifold $\mathbb P(E)$ is isomorphic to $\Tot(E^*)/\C^\times$. There is the line bundle $\mathcal O(1)$ over $\mathbb P(E)$ defined similarly to the line bundle $\mathcal O(1)$ over a projective space.

\hfill

\begin{definition}
\label{nef_def}
\begin{itemize}
    \item[(i)] A line bundle $L$ over $N$ is called a {\em nef line bundle} if its first Chern class $c_1(L)\in H^{1,1}(X,\mathbb Z)$ lies in the closure of the K\"ahler cone of $N$.
    \item[(ii)] A line bundle $L$ over $N$ is called a {\em big line bundle} if 
    \begin{equation}
        h^0(N, L^{\otimes d}) = O(d^n)
    \end{equation}
    where $n = \dim N$.
    \item[(iii)] A vector bundle $E$ over $N$ is called a {\em nef (resp. big) vector bundle} if the line bundle $\mathcal O(1)$ over $\mathbb P(E)$ is a nef (resp. big) line bundle.
\end{itemize}
\end{definition}

\hfill

To determine whether a given vector bundle $E$ is nef (or big) we need some information about the manifold $\mathbb P(E)$. The next lemma is concerned with the case $E = TN$. Let us fix notation before stating the lemma. For any manifold $Z$ of dimension $m$ (resp. a smooth map $f\colon Z\to Z'$ with fibers of equal dimension $m$) let $K_Z := \Lambda^m \Omega_Z$ (resp. $K_{Z/Z'} := \Lambda^m\Omega_{Z/Z'}$) denote the canonical line bundle of $Z$ (resp. the relative canonical line bundle of the map $f\colon Z\to Z'$).

\hfill

\begin{lemma}
Let $N$ be a complex manifold of dimension $n$. Then the canonical line bundle $K_{\mathbb P(TN)}$ on $\mathbb P(TN)$ is isomorphic to the line bundle $\mathcal O(-n)$.
\end{lemma}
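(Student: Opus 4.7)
The strategy is to combine the relative Euler sequence for the projective bundle $\pi\colon \mathbb P(TN)\to N$ with the adjunction formula
$$
K_{\mathbb P(TN)} = \pi^* K_N \otimes K_{\mathbb P(TN)/N},
$$
and to observe a cancellation that is special to the case $E = TN$.

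Since the author's convention identifies $\mathbb P(E)$ with the space of hyperplanes in $E$, equivalently the space of lines in $E^*$, the tautological subbundle $\mathcal O(-1)$ sits inside $\pi^* E^*$. For $E = TN$ this gives $\mathcal O(-1) \hookrightarrow \pi^* T^*N$, and the induced quotient $Q := \pi^* T^*N / \mathcal O(-1)$ has rank $n-1$. Applying $\mathcal Hom(\mathcal O(-1),-)$ to the tautological sequence $0\to \mathcal O(-1) \to \pi^* T^*N \to Q \to 0$, together with the identification $T_{\mathbb P(TN)/N} = Q\otimes \mathcal O(1)$ coming from the usual description of tangent vectors to a projective bundle as Hom's from the tautological line to its complement, yields the relative Euler sequence
$$
0 \to \mathcal O_{\mathbb P(TN)} \to \pi^* T^*N \otimes \mathcal O(1) \to T_{\mathbb P(TN)/N} \to 0.
$$

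Taking top exterior powers (the copy of $\mathcal O$ contributes trivially) and using $\det(\pi^* T^*N \otimes \mathcal O(1)) = \pi^*(\det T^*N) \otimes \mathcal O(n) = \pi^* K_N \otimes \mathcal O(n)$, I read off $\det T_{\mathbb P(TN)/N} = \pi^* K_N \otimes \mathcal O(n)$, whence
$$
K_{\mathbb P(TN)/N} = \pi^* K_N^{-1} \otimes \mathcal O(-n).
$$
Feeding this into the adjunction above gives
$$
K_{\mathbb P(TN)} = \pi^* K_N \otimes \pi^* K_N^{-1} \otimes \mathcal O(-n) = \mathcal O(-n),
$$
as claimed.

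There is no real analytic obstacle here; the only point one has to be careful about is bookkeeping the convention for $\mathcal O(1)$ (lines in $E^*$ versus lines in $E$), so that the two factors $\pi^* K_N$ genuinely cancel. This cancellation is precisely the feature that distinguishes $E = TN$ from a general vector bundle: had $E$ been arbitrary, the answer would carry an extra twist $\pi^*(K_N\otimes \det E)$, which only disappears when $\det E = K_N^{-1}$.
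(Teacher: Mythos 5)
Your proof is correct and follows essentially the same route as the paper: the relative Euler sequence for $\pi\colon\mathbb P(TN)\to N$ (you use its tangent form, the paper its cotangent form $0\to\Omega_{Z/N}\to\pi^*\mathcal TN\otimes\mathcal O(-1)\to\mathcal O_Z\to 0$, which are dual to each other), followed by taking determinants and cancelling the two copies of $\pi^*K_N$ via $K_Z=\pi^*K_N\otimes K_{Z/N}$. Your closing remark about the cancellation being special to $\det E=K_N^{-1}$ is a nice sanity check but does not change the argument.
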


\begin{proof}
Let $Z$ denote the manifold $\mathbb P(TN)$ and $\pi\colon Z\to N$ the natural projection. First, one has the relative Euler exact sequence of sheaves on $Z$
\begin{equation}
    0 \to \Omega_{Z/N} \to \pi^*\mathcal TN\otimes \mathcal O(-1) \to \mathcal O_Z\to 0
\end{equation}
By taking the highest exterior degree of the terms of the sequence we get
\begin{equation}
\label{179}
    \pi^*K_N^*\otimes \mathcal O(-n) \cong K_{Z/N}
\end{equation}
Second, we have the short exact sequence of differentials
\begin{equation}
    0 \to \pi^*\Omega_N \to \Omega_Z \to \Omega_{Z/N}\to 0
\end{equation}
hence 
\begin{equation}
\label{57}
K_Z = \pi^* K_N\otimes K_{Z/N}
\end{equation}
By putting formulae~(\ref{179}) and (\ref{57}) together we obtain that
\begin{equation}
    K_Z \cong \pi^*K_N\otimes \pi^*K_N^*\otimes \mathcal O(-n) \cong \mathcal O(-n)
\end{equation}
\end{proof}

Before going any further we shall state a part of \cite{Pedersen_Poon}, {\sf Prop. 1.10} in a form convenient for our exposition. This result will turn out to be useful to study the geometry of the manifold $Z = \mathbb P(TN)$.

\hfill

\begin{proposition} 
\label{_pedersen_poon_}
(\cite{Pedersen_Poon}) Let $Y$ be a K\"ahler Ricci-flat manifold equipped with a Hamiltonian action of $U(1)$. Fix a moment map $\psi_{t_0}\colon Y\to\mathbb R$ and for any $t\in\R$ let $\psi_t := \psi_{t_0} + (t-t_0)$ be another moment map. Assume that $0$ is a regular value of $\psi_{t_0}$.
For any point $y\in\psi^{-1}(0)$ consider a neighbourhood $Y'\subset Y$ of $y$. Let us call points of $Y'$ equivalent if they lie in the same $\C^\times$-orbit. Assume that $Y'$ satisfies the following property: for any $t\in\mathbb R$ close enough to zero the set $\psi_t^{-1}(0)$ is contained in $Y'$ and the natural map
$$
\psi_t^{-1}(0)/U(1) \to Y'/\sim
$$
is an isomorphism. Let $\omega(t)$ be the K\"ahler form on $Z: = Y'/\sim$ obtained from K\"ahler reduction with respect to $\psi_t$. Then the family $\omega(t)$ of forms satisfies the following differential equation

\begin{equation}
    c\frac{d}{dt}\omega(t) = \rho(t) - \sqrt{-1}\partial\overline{\partial}\log\delta(t)
\end{equation}
where $\rho(t)$ is the Ricci form of $\omega(t)$, $\delta(z, t)$ is the length of the circle on the submanifold $\{\mu(x) + t = 0\} \subset Y$ lying over a point $z\in Z$, and $c$ is a constant.
\end{proposition}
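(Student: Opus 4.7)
The strategy is to (a) derive a variational formula for $\omega(t)$ in the spirit of Duistermaat--Heckman and (b) use Ricci-flatness of $Y$ as the input that converts the variation into the expression $\rho(t)-\sqrt{-1}\partial\bar\partial\log\delta(t)$.

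First I would exploit the hypothesis on $Y'$: all the level sets $\psi_t^{-1}(0)$, for $t$ near $0$, sit inside $Y'$ and each one realises the same complex manifold $Z = Y'/{\sim}$ via K\"ahler reduction. Thus we have a family of principal $U(1)$-bundles $P_t := \psi_t^{-1}(0)\to Z$, all sitting inside $Y'$. Let $\alpha_t$ be the connection $1$-form on $P_t$ whose horizontal distribution is the $g_Y$-orthogonal complement to the $U(1)$-orbit, and let $F_t := d\alpha_t$ be its curvature, which descends to a closed $2$-form on $Z$ (still denoted $F_t$). A standard computation, identical to the one used to prove that the reduced cohomology class $[\omega(t)]$ varies linearly, shows that
\begin{equation}
c\,\frac{d}{dt}\omega(t)\;=\;F_t,
\end{equation}
with $c$ a normalization constant depending on the choice of $\psi_{t_0}$ (concretely, $c$ is the period of the $U(1)$-action). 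This step is essentially differentiating the pull-back identity $\iota_t^*\omega_Y = q_t^*\omega(t)$, where $\iota_t\colon P_t\hookrightarrow Y$ and $q_t\colon P_t\to Z$, and using $d\psi_t = \phi \cntrct \omega_Y$ together with the flow of $\phi/\|\phi\|^2$ to identify the neighbouring level sets.

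Next I would express the metric on a tubular neighbourhood of $P_t$ as a warped product along the $U(1)$-orbits. Writing $\delta(z,t)$ for the length of the orbit through the fibre over $z$, and choosing a unit vector $\phi/\|\phi\| = \phi/\delta$ (up to a constant factor), the metric decomposes as $g_Y = q_t^*g(t) + \delta^2\,\alpha_t\otimes\alpha_t + (\mathrm{radial\ piece})$. This is the standard presentation of a K\"ahler manifold with a holomorphic $\C^\times$-action as an $S^1$-bundle over the symplectic quotient. The corresponding formula for the Ricci form of $Y$ under a Riemannian submersion with totally geodesic fibres (or rather its K\"ahler analogue) reads
\begin{equation}
\rho_Y \;=\; q_t^*\rho(t) - q_t^*(\sqrt{-1}\,\partial\bar\partial\log\delta(t)) - F_t
\end{equation}
on the horizontal distribution, modulo the customary sign conventions. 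This formula is proved by writing $\rho_Y = -\sqrt{-1}\partial\bar\partial\log\det g_Y$ in adapted coordinates and separating the horizontal determinant (which gives $\rho(t)$ after descent) from the $\delta^2$ factor (which gives the $\log\delta$ term) and from the mixed horizontal--vertical terms (which recombine into the curvature $F_t$).

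Finally, Ricci-flatness of $Y$ forces $\rho_Y = 0$, so that the above identity yields $F_t = q_t^*\rho(t) - q_t^*(\sqrt{-1}\,\partial\bar\partial\log\delta(t))$, and combining with the variational formula for $\omega(t)$ gives exactly the claimed differential equation. The main obstacle is the second step: keeping track of the signs and of the constant $c$ in the submersion formula for the K\"ahler Ricci form, and ensuring that the correction term is precisely $\sqrt{-1}\partial\bar\partial\log\delta$ rather than $\partial\bar\partial$ of some other function of the vertical length. Once this is pinned down, the rest is formal bookkeeping.
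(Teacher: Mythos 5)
The paper does not prove this proposition: it is a restatement of \cite{Pedersen_Poon}, Prop.~1.10, and the black square immediately after the statement marks it as a quoted external result. So there is no internal proof to compare yours with; judged on its own merits, your sketch has a genuine gap in Step~(b), and in fact your two displayed equations are mutually inconsistent.

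Step~(a) is essentially fine, up to the slip that the level sets are identified by flowing along $I\phi/\|\phi\|^2$ (the gradient of $\psi$), not along $\phi/\|\phi\|^2$; with that identification one gets $\frac{d}{dt}\omega(t)=\pm F_t$ with a universal normalization. The problem is the asserted ``standard submersion formula'' $\rho_Y=q_t^*\rho(t)-q_t^*\bigl(\sqrt{-1}\partial\overline{\partial}\log\delta\bigr)-F_t$: no identity of this shape with a universal coefficient on $F_t$ can hold. Cohomological sanity check: for $Y=T^*\P^2$ one has $Z=\P(T\P^2)$ with $c_1(Z)=2\,c_1(\calO(1))$ (the paper's computation $K_{\P(TN)}=\calO(-n)$), while $[F_t]$ is a fixed nonzero multiple of $c_1(\calO(1))$ and $[\sqrt{-1}\partial\overline{\partial}\log\delta]=0$, so $0=[\rho(t)]-[F_t]$ fails. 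Equivalently, combining your (a) and (b) would give $c=(\text{period of the action})$, a universal constant, whereas $c=\dim_{\H}Y=n$ by \cite{Bielawski_constant}, Prop.~1.2 --- precisely the value the paper needs in \ref{nef_Bielawski}. What actually happens when one computes $\rho_Y=-\sqrt{-1}\partial\overline{\partial}\log\det g_Y$ through the holomorphic exact sequence $0\to\calO_{Y'}\cdot(\phi-\sqrt{-1}I\phi)\to\mathcal TY'\to q^*\mathcal TZ\to 0$ is that the quotient metric on $q^*\mathcal TZ$ at a point $y$ is $g(\psi(y))$, a $t$-dependent family; differentiating $\log\det g(t)$ in the $\psi$-direction contributes, on horizontal vectors, a term of the form $\bigl(\Lambda_{\omega(t)}\dot\omega(t)\bigr)\,\|\phi\|^2\,\dot\omega(t)$, plus a similar term coming from $\partial_t\|\phi\|^2$ inside the $\log\delta$ piece --- not $F_t$ with a fixed coefficient. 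The whole content of the proposition is that this scalar prefactor is a \emph{constant} $c$; establishing that requires the vertical and mixed components of the equation $\rho_Y=0$ (or an equivalent argument along the $\C^\times$-orbits), which your sketch never invokes, since you only use Ricci-flatness on the horizontal distribution. Without that input you obtain $f(z,t)\,\dot\omega(t)=\rho(t)-\sqrt{-1}\partial\overline{\partial}\log\delta$ for an undetermined function $f$, which is strictly weaker than the claim.
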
\endproof{}

\hfill

We shall now state a general theorem which relates the properties of the Feix--Kaledin metric on $Y = T^*N$ with the geometry of $N$. This theorem is due to R. Bielawski who kindly permitted me to write his proof here.

\hfill

\begin{theorem}
\label{nef_Bielawski}
({\sf R. Bielawski, private correspondence}) Let $N$ be a K\"ahler manifold of dimension $n$. Suppose that the Feix--Kaledin metric is defined globally on $Y = T^*N$. Assume that the moment map $\psi\colon Y\to\R$ for the $U(1)$-action on $Y$ is proper. Then the tangent bundle $TN$ to the manifold $N$ is nef.
\end{theorem}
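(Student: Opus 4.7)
\textbf{The approach} is to exhibit $c_1(\mathcal O(1))$ on $\mathbb P(TN)$ as a limit of rescaled K\"ahler classes obtained by reducing $Y$ by the $U(1)$-action at varying negative levels of $\psi$, and then invoke Ricci-flatness of the Feix--Kaledin metric via \ref{_pedersen_poon_} to compute this asymptotic direction. Normalize $\psi$ so that $\psi \equiv 0$ on the zero section $N \subset Y$; then in fiber coordinates $\psi = -\tfrac{1}{2}\|y\|^2$, so $\psi \leq 0$ and properness of $\psi$ forces $N = \psi^{-1}(0)$ to be compact. By HKLR-compatibility \ref{compatible_action}(iii) the $U(1)$-fixed locus equals the complex Lagrangian $N$, so $U(1)$ acts freely on $\psi^{-1}(-t)$ for every $t > 0$. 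Consequently, for each $t > 0$, the reduction $Z_t := \psi^{-1}(-t)/U(1)$ is a compact K\"ahler manifold, and the $\C^\times$-quotient map identifies $Z_t$ canonically with $\mathbb P(TN) = (Y \setminus N)/\C^\times$. Denote the induced K\"ahler form on $Z = \mathbb P(TN)$ by $\omega(t)$.

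Since the Feix--Kaledin metric is hyperk\"ahler it is Ricci-flat, so \ref{_pedersen_poon_} applies to the family $\omega(t)$ and reads
\[
c\,\frac{d}{dt}\omega(t) = \rho(t) - \sqrt{-1}\,\partial\bar\partial \log\delta(t),
\]
with $\rho(t)$ the Ricci form of $\omega(t)$. Passing to de Rham cohomology the $\partial\bar\partial$-term disappears and one obtains
\[
c\,\frac{d}{dt}[\omega(t)] = [\rho(t)] = 2\pi\,c_1(\mathbb P(TN)) = 2\pi n\, c_1(\mathcal O(1)),
\]
where the last equality uses the lemma $K_{\mathbb P(TN)} \cong \mathcal O(-n)$ proved just before the theorem. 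Integrating,
\[
[\omega(t)] = [\omega(t_0)] + A(t-t_0)\, c_1(\mathcal O(1)) \qquad \text{for all } t > 0,
\]
with $A = 2\pi n/c$ (modulo sign).

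For every $t > 0$ the form $\omega(t)$ is a genuine K\"ahler form on $\mathbb P(TN)$, so $[\omega(t)]/t$ is a K\"ahler class. Taking $t \to +\infty$,
\[
\frac{[\omega(t)]}{t} \longrightarrow A\, c_1(\mathcal O(1)),
\]
so $c_1(\mathcal O(1))$ lies in the closure of the K\"ahler cone of $\mathbb P(TN)$, which is precisely nefness of $\mathcal O(1)$ and hence of $TN$. \textbf{The main obstacle} is verifying that the constant $A$ is positive, i.e.\ that the $t$-derivative of $[\omega(t)]$ points in the $+c_1(\mathcal O(1))$-direction rather than the opposite one. This is a sign-chasing exercise best handled by restricting to a single fiber $T^*_xN \cong \C^n$ of $Y \to N$: there the reduction at level $-t$ is $\mathbb P^{n-1}$ equipped with a Fubini--Study form whose class is the positive multiple $t\cdot c_1(\mathcal O(1))|_{\mathbb P^{n-1}}$, fixing the sign unambiguously and hence establishing $A>0$.
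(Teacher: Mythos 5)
Your proposal takes the same route as the paper's proof: reduce $Y$ by $U(1)$ at shifted levels of $\psi$, identify the reductions with $\mathbb P(TN)$, feed Ricci-flatness into \ref{_pedersen_poon_}, pass to cohomology using $K_{\mathbb P(TN)}\cong\mathcal O(-n)$, and realize $c_1(\mathcal O(1))$ as $\lim_{t\to\infty}[\omega(t)]/t$. There are, however, two places where you assert what the paper actually has to prove. The first is the identification $\psi^{-1}(-t)/U(1)\cong(Y\setminus N)/\C^\times=\mathbb P(TN)$, which you call canonical but which is exactly where the properness hypothesis is consumed; you invoke properness only to conclude that $N$ is compact. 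Freeness of the circle action on $\psi^{-1}(-t)$ makes the reduction a manifold, but one must also show that \emph{every} $\C^\times$-orbit in $Y\setminus N$ meets $\psi^{-1}(-t)$, in a single $U(1)$-orbit — otherwise $Z_t$ is only an open piece of $\mathbb P(TN)$ and your limit only produces a nef class there. The paper does this via the Hilbert--Mumford criterion: the weight $w^-_t(x)=-\lim_{s\to\infty}\psi(e^sx)-t$ equals $+\infty$ for $x\notin N$ precisely because $\psi\le 0$ is proper, hence unbounded below along each ray $\{e^sx\}$; this gives $Y^s_t=Y\setminus N$, and \ref{ss} then supplies the isomorphism of the K\"ahler quotient with $(Y\setminus N)/\C^\times$.

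The second soft spot is the sign of $A$, which you correctly single out as the remaining obstacle. The paper settles it by quoting Bielawski's computation that the constant in \ref{_pedersen_poon_} is $c=\dim_{\mathbb H}Y=n>0$ for a hyperk\"ahler manifold with an HKLR-compatible circle action (\cite{Bielawski_constant}, {\sf Prop.\ 1.2}). Your proposed fix — computing the reduction of a single cotangent fibre as the flat model $\C^n$ reduced to $\mathbb P^{n-1}$ with Fubini--Study class $t\,c_1(\mathcal O(1))$ — tacitly assumes that the Feix--Kaledin metric restricts to the flat metric on $T^*_xN$ and that $\psi$ restricts there to $-\tfrac12\|y\|^2$; neither is justified in general, and neither is needed. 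Within your own framework the sign is forced more cheaply: pair $[\omega(t)]=[\omega(t_0)]+A(t-t_0)\,c_1(\mathcal O(1))$ with the class $\ell$ of a line in a fibre $\mathbb P(T_xN)$. Since $\omega(t)$ is K\"ahler for every $t>0$, the affine function $t\mapsto[\omega(t)]\cdot\ell$ is positive on $(0,\infty)$, so its slope $A\,(c_1(\mathcal O(1))\cdot\ell)=A$ is nonnegative; and $A\ne 0$ because $cA$ is a nonzero multiple of $n$ while $c_1(\mathcal O(1))\cdot\ell=1\ne 0$. With these two repairs your argument is complete and coincides with the paper's.
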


\begin{proof}{\bf Step 1:} Choose the moment map $\psi\colon Y\to\R_{\le 0}$ for the $U(1)$-action on $Y$ in such a way that $N = \psi^{-1}(0)$. For any $t>0$ let $\psi_t:=\psi + t$ be another moment map for the $U(1)$-action on $Y$. Denote by $Y\twice^t U(1)$ the K\"ahler quotient of $Y$ with respect to the moment map $\psi_t$. We start by showing that $Y\twice^t U(1)$ is naturally isomorphic to $\mathbb P(TN)$ as a complex manifold. We do this by describing the set $Y^{s}_t$ of stable points of $Y$ with respect to the moment map $\psi_t$ with the use of the Hilbert-Mumford criterion (\ref{Hilbert_Mumford}). We claim that for any $t>0$
\begin{equation}
    Y^{s}_t = Y\setminus N
\end{equation}
where $N$ is embedded to $Y$ as the zero section. Let $\xi\in\mathfrak u(1)\cong \R$ be the unit positive pointing vector. For any $x\in Y$ we shall denote by $w^+_t(x)$ (resp. $w^-_t(x)$) the weight of the pair $x$ and $\xi$ (resp. $-\xi$) with respect to the moment map $\psi_t$ (\ref{mu_weight_def}). We compute the weights:
\begin{equation}
    w^+_t(x) = \lim\limits_{s\to\infty} \psi_t(e^{-s}x) = \lim\limits_{s\to\infty} \psi(e^{-s}x) +t = t>0
\end{equation}
as $\lim\limits_{s\to\infty} e^{-s}x$ is a point of the zero section and $\psi$ vanishes on the zero section.
\begin{equation}
    w^-_t(x) = -\lim\limits_{s\to\infty} \psi_t(e^s x) = -\lim\limits_{s\to\infty} \psi(e^s x) -t = \begin{cases}
    +\infty,&\text{if } x\notin N\\
    -t<0,&\text{otherwise}
    \end{cases}
\end{equation}
Indeed, $\psi\le 0$ and it cannot be bounded on the ray $\{e^s x\}$ as this map is proper. We obtain that $Y^{s}_t = Y\setminus N$ (\ref{Hilbert_Mumford}). By \ref{ss} the good quotient $(Y\setminus N)/\C^\times$ is isomorphic to the K\"ahler quotient $Y\twice^t U(1)$ as a complex variety. The quotient $(Y\setminus N)/\C^\times$ is equal to $\mathbb P(TN)$ by the very definition of the projectivization of a vector bundle, hence the claim.

{\bf Step 2:} In Step 1 we identified each K\"ahler quotient $Y\twice^t U(1)$ with $Z$. As $Y\twice^t U(1)$ is equipped with a K\"ahler form $\omega(t)$, we obtain a one-parameter family $\omega(t), t>0$ of K\"ahler forms on $Z$. The manifold $Y$ is Ricci-flat as it is hyperk\"ahler. We may now apply \ref{_pedersen_poon_} with $Y' = Y^{s} = Y\setminus N$ to obtain

\begin{equation}
    c\frac{d}{dt}\omega(t) = \rho(t) - \sqrt{-1}\partial\overline{\partial}\log\delta(t)
\end{equation}

In \cite{Bielawski_constant}, {\sf Prop. 1.2} Bielawski shows that if $Y$ is a hyperk\"ahler manifold with a HKLR-compatible $U(1)$-action then $c = \dim_{\mathbb H} Y = n$. As the Ricci form $\rho(t)$ represents the class of $c_1(Z) = -c_1(K_Z)$ one obtains that
\begin{equation}
    n\frac{d}{dt}[\omega(t)] = c_1(Z)
\end{equation}

By the previous step $c_1(Z) = nc_1(\mathcal O(1))$. Hence $[\omega(t)] = [\omega(t_0)] + (t-t_0)c_1(\mathcal O(1))$. Therefore
$$
c_1(\mathcal O(1)) = \lim\limits_{t\to \infty}\frac{t-t_0}{t} c_1(\mathcal O(1)) = \lim\limits_{t\to\infty}\frac{[\omega(t)] - [\omega(t_0)]}{t} = \lim\limits_{t\to\infty} \frac{1}{t}[\omega(t)]
$$
We have just realized $c_1(\mathcal O(1))$ as a limit of K\"ahler clases. That means that the bundle $TN$ is nef by the very definition of a nef vector bundle (\ref{nef_def}).
\end{proof}

\hfill

We shall now apply \ref{nef_Bielawski} to K\"ahler quotients.

\hfill

\begin{corollary}
\label{nef_mine}
Let $N = V\twice C$ be a K\"ahler quotient of a vector space $V$. Suppose that $N$ is smooth and compact. If the Feix--Kaledin metric on $Y=T^*N$ is complete, then the tangent bundle to $N$ is big and nef.
\end{corollary}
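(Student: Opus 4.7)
The proof plan has two parts: first, derive nef-ness from \ref{nef_Bielawski} by checking its properness hypothesis; second, upgrade nef to big via a volume-growth argument using Duistermaat--Heckman.

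For the first part, since $N$ is smooth, \ref{www} gives that the Feix--Kaledin metric is globally defined on $Y=T^*N$, and by \ref{comp} its metric completion is $\tilde{Y}=X\trice C$. Metric completeness then forces $Y=\tilde{Y}$ as metric spaces. Since $N$ is compact, \ref{stein} applies to $\tilde{Y}$ and produces a proper strictly plurisubharmonic function $-\psi$ on $\tilde{Y}_J$. Because properness is a purely topological property of the moment map (independent of the chosen complex structure), $\psi\colon Y\to\R_{\le 0}$ is itself proper. Applying \ref{nef_Bielawski} now yields that $TN$ is nef.

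For the second part, I would use the one-parameter family of K\"ahler classes $[\omega(t)]=[\omega(t_0)]+(t-t_0)c_1(\calO(1))$ on $Z=\P(TN)$ produced in Step~2 of the proof of \ref{nef_Bielawski}, defined for every $t>0$. Since $\calO(1)$ is nef on a projective manifold, it is big (equivalently, $TN$ is big) if and only if the top self-intersection $c_1(\calO(1))^{2n-1}$ is strictly positive---this is a standard criterion in the theory of positive line bundles. By the Duistermaat--Heckman theorem applied to the proper $U(1)$-moment map $\psi$, the polynomial $P(t):=[\omega(t)]^{2n-1}/(2n-1)!$ is the density of $\psi_*(\omega_I^{2n}/(2n)!)$ on the interval of regular values, so
\begin{equation*}
\Vol\{-T\le\psi\le 0\} \;=\; \int_0^T P(t)\,dt,
\end{equation*}
and the leading coefficient of $P$ as a polynomial in $t$ is exactly $c_1(\calO(1))^{2n-1}/(2n-1)!$. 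Hence it suffices to establish that this hyperk\"ahler volume grows like $T^{2n}$ as $T\to\infty$.

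The main obstacle is verifying this $T^{2n}$ growth rate. I would approach it using the asymptotic geometry of $\tilde{Y}=X\trice C$: on $X=T^*V$ the moment map $\Psi(x,y)=-\tfrac12\|y\|^2$ and the holomorphic moment map $\calM$ of \ref{_hol_symp_ham_} are explicit, and the set $\{|\Psi|\le T\}\cap \mu_{hk}^{-1}(0)$ admits a coarea-type volume estimate scaling correctly with $T$; dividing by the compact group $C$ should transfer this to the expected growth on $\tilde{Y}$. The delicate step is controlling the contribution of the singular strata of the stratified hyperk\"ahler space $\tilde{Y}$ (\ref{Dancer_Swann}) and ensuring they affect only subleading terms. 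If $C$ is a torus, one can bypass the asymptotic analysis entirely by appealing to the Bielawski--Dancer classification \cite{Bielawski_Dancer}, which identifies every complete example $N$ with a product of projective spaces, for which bigness (indeed ampleness) of $TN$ is classical.
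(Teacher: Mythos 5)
Your first part (nefness) is correct and is essentially the paper's own argument: completeness gives $Y=\tilde Y$, \ref{stein} gives properness of $\psi$ (and you are right that properness does not depend on which complex structure one puts on the underlying space), and \ref{nef_Bielawski} concludes. One small simplification you miss: since completeness forces $Y=\tilde Y=T^*N$ with $N$ smooth, the completion is itself smooth, so your later worry about singular strata of $\tilde Y$ is vacuous in the situation of the corollary.

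For bigness you depart from the paper, and here there is a genuine gap. The paper's route is short: by \ref{affine} the variety $Y_J=\tilde Y_J$ is affine, and by \cite{Greb}, Cor.~4.4, affineness of the canonical extension (identified with $Y_J$ via \ref{twistedtwist}) implies that $TN$ is big. Your alternative --- reduce bigness of the nef class $c_1(\calO(1))$ to positivity of its top self-intersection, express that top self-intersection as the leading coefficient of the Duistermaat--Heckman polynomial $P(t)=\int_Z[\omega(t)]^{2n-1}/(2n-1)!$, and deduce positivity from $T^{2n}$ growth of $\Vol\{-T\le\psi\le 0\}$ --- is a coherent and dimensionally consistent strategy, but the step that carries all the weight, namely the lower bound
\begin{equation*}
\Vol\{-T\le\psi\le 0\}\;\geq\; c\,T^{2n}\qquad (c>0),
\end{equation*}
is exactly the step you flag as ``the main obstacle'' and do not prove. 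The upstairs picture is not enough as stated: $\{\|y\|^2\le 2T\}\cap\mu_{hk}^{-1}(0)$ has a volume you must compare with the volume of the reduction, and the coarea factor involves the norms of the fundamental vector fields of $C$, which degenerate near points with large stabilizer and grow at infinity; controlling this requires a real argument, not a remark. Your fallback via the Bielawski--Dancer classification only covers $C$ a torus, whereas the corollary is stated for arbitrary compact $C$. So as written the proposal proves nefness but not bigness; to close it you would either need to carry out the volume estimate honestly or switch to the paper's affineness argument, which requires no asymptotic analysis at all.
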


\begin{proof} By \ref{very_long_proof} the Feix--Kaledin metric is defined globally on $Y = T^*N$. If this metric is complete, that is to say $Y = \tilde{Y}$, then by \ref{stein} the moment map $\psi\colon Y\to \R_{\le 0}$ is proper. \ref{nef_Bielawski} completes the proof of the nefness of $TN$.

By \ref{affine} the variety $Y_J = \tilde{Y}_J$ is an affine algebraic variety. The result in \cite{Greb}, {\sf Cor. 4.4} states that this property is sufficient for $TN$ to be big (see also the remark below).
\end{proof}

\hfill

\remark The results in \cite{Greb} are formulated for varieties called {\em canonical extensions}. These varieties are discussed in Appendix to this paper under the name {\em twisted cotangent bundles}. We believe that this name is more preferable in the context of hyperk\"ahler geometry than the name used in \cite{Greb}. In particular, \ref{twistedtwist} states that canonical extensions/twisted cotangent bundles are isomorphic to $Y_J$ (the variety $Y$ with "twisted" complex structure). That is why we can apply \cite{Greb} in the proof of \ref{nef_mine}.

\hfill

The property of a variety to have the tangent bundle which is big and nef is quite restrictive. By \cite{Hsiao} the varieties with this property are necessarily Fano. The only known examples of such varieties are rational homogeneous. A famous conjecture by Campana--Peternell (\cite{CP}) predicts that rational homogeneous varieties exhaust the class of Fano varieties with nef tangent bundle.

\hfill


\section{Appendix: Feix Construction and Twisted Cotangent Bundles}

Let $X$ be a compact K\"ahler manifold with a fixed K\"ahler form $\omega\in\Lambda^{1,1}(X)$. Consider the set of locally free sheaves $\mathcal E$ over $X$ which are extensions of the form 
\begin{equation}
\label{extension}
    0 \to \Omega X \to \mathcal E \to \mathcal{O}_X \to 0
\end{equation}
Their isomorphism classes are classified by $\Ext^1(\mathcal O_X,\Omega X) \cong H^1(X,\Omega X) \cong H^{1,1}(X)$. The first isomorphism holds by general homological algebra results, the second one follows from the classical Hodge theory.

\hfill

\begin{definition}
\label{twist_def}
\begin{itemize}
    \item[(i)] Consider the extension $\mathcal E$ corresponding to $[\omega]\in H^{1,1}(X) = \Ext^1(\mathcal O_X,\Omega X)$. We shall call $\mathcal E$ the {\em twistor sheaf} of $X$.
    \item[(ii)] Let $E$ be the vector bundle whose sheaf of sections is $\mathcal E$. Consider the map of vector bundles $\rho\colon E\to \underline{\C}$ induced by the surjection in the short exact sequence~(\ref{extension}). Then the preimage in $E$ of the unit section of the trivial bundle $\underline{\C}$ is an affine bundle over $X$ called the {\em twisted cotangent bundle} (\cite{Beilinson_Kazhdan}). We shall denote it by $\Omega X^{tw}$.
\end{itemize}
\end{definition}

\hfill

The manifold $\Tot(E)$ admits a smooth projection $T'\colon \Tot(E)\to\mathbb A^1$ which we define as a composition of the map $\rho\colon \Tot(E)\to X\times \mathbb A^1$ with the projection to the second factor. Its fiber over $0\in\mathbb A^1$ is $T^*X:=\Tot(\Omega X)$. Hence we constructed a deformation of $T^*X$ over $\mathbb A^1$.

We shall now rewrite the Feix construction (\cite{_Feix1_}) of the Feix--Kaledin hyperk\"ahler structure on a neighbourhood $U$ of the zero section of $T^*X$ in more canonical terms. That will allow us to see that her construction is deeply related with twistor sheaves defined above. This relation is not transparent from her papers.

Let $X$ be a complex manifold. Then the manifold $X^\C := X\times\overline{X}$ is the canonical complexification of the real manifold $X_\R$. Indeed, the manifold $X_\R$ admits a totally real embedding
\begin{equation}
    \Delta\colon X \to X\times \overline{X}\:\:\:\:\: x\mapsto (x,x)
\end{equation}
Denote by $\pi\colon X\times\overline{X}\to X$ the projection to the first factor.

Now let $\omega\in\Lambda^{1,1}(X)$ be a real-analytic K\"ahler form on $X$. The form $\omega$ can be extended to the holomorphic $2$-form $\omega^\C$ on a neighbourhood $U$ of $\Delta(X)$ in $X^\C$. Let $(z_1,z_2,...z_n)$ be holomorphic coordinates on $X$ such that
$$
\omega = \sum\limits_{i,j} h_{ij}dz_i\wedge d\overline{z_j}
$$
where $h_{ij}$ are local real analytic function on $X$. Then $(z_1,... z_n, w_1,.. w_n)$ where $w_i := \overline{z_i}$ is a holomorphic coordinate system on $X\times \overline{X}$. The complexified form $\omega^\C$ is locally given as
$$
\omega^\C = \sum\limits_{i,j} \tilde{h_{ij}} dz_i\wedge dw_j
$$
where $\tilde{h_{ij}}$ is the holomorphic extension of $h_{ij}$. We shall abuse the notation and denote the neighbourhood $U$ on which $\omega^\C$ is defined again by $X^\C$ though $U$ may be smaller.

The coordinate description allows one to see clearly that the projection $\pi\colon X^\C\to X$ is a holomorphic Lagrangian fibration with smooth fibers. Let $\mathcal T({X^\C/X})$ and $\Omega({X^\C/X})$ be the relative tangent and cotangent bundle respectively of $X^\C$ with respect to $X$. We define a holomorphic fiberwise connection $\nabla\colon \mathcal T({X^\C/X}) \to \mathcal T({X^\C/X})\otimes\Omega({X^\C/X})$ on the fibers of the fibration $\pi$ by the following formula
\begin{equation}
\label{affine_connection_def}
(\nabla_V U)\cntrct\omega^\C = V\cntrct(d (U\cntrct \omega^\C))
\end{equation}
for every holomorphic vector fields $V,U$ tangent to the fibers. We shall call $\nabla$ the {\em Liouville--Arnold} connection. The following fact is well-known (\cite{_Feix1_}).

\hfill

\begin{proposition} Liouville--Arnold connection $\nabla\colon \mathcal T({X^\C/X}) \to \mathcal T({X^\C/X})\otimes\Omega({X^\C/X})$ is well-defined by formula~(\ref{affine_connection_def}). Furthermore, it is flat and torsion-free along the fibers.
\end{proposition}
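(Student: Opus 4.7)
The plan is to identify $\nabla$, via contraction with $\omega^\C$, with the canonical flat connection on $\pi^*\Omega X$ along fibers; this reduces everything to standard properties of the Lie derivative.

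First I would establish that the contraction map $\Phi\colon \mathcal T({X^\C/X}) \to \Omega X^\C$, $U \mapsto U\cntrct\omega^\C$, factors through the subsheaf $\pi^*\Omega X \subset \Omega X^\C$ of horizontal $1$-forms (those annihilating vertical vectors) and that the resulting map
\begin{equation*}
\Phi\colon \mathcal T({X^\C/X})\to \pi^*\Omega X
\end{equation*}
is an isomorphism. The containment in $\pi^*\Omega X$ is exactly the Lagrangian property $\omega^\C(U,W)=0$ for $U,W$ vertical; injectivity is the non-degeneracy of $\omega^\C$; surjectivity is a rank count, as both sheaves have rank $\dim X$.

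Second, I would check that for vertical $V,U$ the $1$-form $V\cntrct d(U\cntrct\omega^\C)$ again lies in $\pi^*\Omega X$, so that $\nabla_V U := \Phi^{-1}(V\cntrct d(U\cntrct\omega^\C))$ makes sense. The key calculation combines Cartan's formula $\mathsf L_V = d\iota_V + \iota_V d$ with $V\cntrct(U\cntrct\omega^\C) = \omega^\C(U,V) = 0$ (Lagrangian again) to give
\begin{equation*}
V\cntrct d(U\cntrct\omega^\C) = \mathsf L_V(U\cntrct\omega^\C).
\end{equation*}
Since $\mathsf L_V$ preserves horizontal forms when $V$ is vertical (its flow is fiber-preserving), well-definedness follows. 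The Leibniz rule in $U$ and $\calO_{X^\C}$-linearity in $V$ are routine verifications.

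Third, with the key identity $\Phi(\nabla_V U) = \mathsf L_V\Phi(U)$ in hand, flatness is immediate from $[\mathsf L_V,\mathsf L_U] = \mathsf L_{[V,U]}$:
\begin{equation*}
\Phi(R(V,U)W) = \mathsf L_V\mathsf L_U\Phi(W) - \mathsf L_U\mathsf L_V\Phi(W) - \mathsf L_{[V,U]}\Phi(W) = 0.
\end{equation*}
For torsion-freeness I would expand $\Phi(\nabla_V U - \nabla_U V - [V,U])$ using the commutator rule $\mathsf L_V(U\cntrct\omega^\C) = [V,U]\cntrct\omega^\C + U\cntrct\mathsf L_V\omega^\C$, invoke $d\omega^\C = 0$ so that $\mathsf L_V\omega^\C = d(V\cntrct\omega^\C)$, and then apply Cartan once more together with $\omega^\C(V,U)=0$; everything cancels.

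The main obstacle is really the bookkeeping in step three: one has to chase two or three applications of Cartan's formula while tracking where the Lagrangian property enters. The conceptual payoff is that $\nabla$ is nothing but the tautological flat connection on $\pi^*\Omega X$ (the one whose flat sections along fibers are pullbacks from the base) transported through $\Phi$, which is why it is automatically flat and torsion-free along the fibers.
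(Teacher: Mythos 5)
Your argument is correct. Note that the paper itself offers no proof of this proposition --- it is declared ``well-known'' with a citation to Feix and closed immediately --- so your write-up supplies an argument where the paper has none, and it is the standard one. The three pillars all check out: (a) the Lagrangian property of the fibers makes $\Phi(U)=U\cntrct\omega^\C$ land in the horizontal forms $\pi^*\Omega X$, and pointwise non-degeneracy of $\omega^\C$ makes $\Phi$ an isomorphism of bundles of equal rank $n$ (I would phrase surjectivity as ``a fibrewise-injective map of vector bundles of the same rank is an isomorphism'' rather than a bare rank count, to rule out the $\calO\xrightarrow{\,z\,}\calO$ phenomenon, but the pointwise statement is what you are implicitly using); (b) Cartan's formula plus $\omega^\C(U,V)=0$ converts the defining formula into $\Phi(\nabla_V U)=\mathsf L_V\Phi(U)$, and $\mathsf L_V$ preserves horizontality for vertical $V$ because $[V,W]$ is vertical when $V,W$ are; (c) flatness then follows from $[\mathsf L_V,\mathsf L_U]=\mathsf L_{[V,U]}$, and the torsion computation closes using $d\omega^\C=0$ (which does hold: the closedness identities for $\omega$ propagate to the holomorphic extension by analytic continuation) together with one more application of Cartan and the Lagrangian condition. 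Your closing remark that $\nabla$ is the canonical flat connection on $\pi^*\Omega X$ transported through $\Phi$ --- with flat sections exactly the pullbacks from the base --- is consistent with, and dual to, what the paper later proves in its Proposition on $\mathcal F_0\cong\mathcal TX$, where the parallel relative $1$-forms are exhibited as $\chi_0\cntrct\omega^\C$ for $\chi$ a vector field on $X$.
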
\endproof{}

\hfill

Let $\mathcal F_0$ be a subsheaf of $\pi_* \Omega({X^\C/X})$ consisting of $1$-forms parallel with respect to $\nabla$. The flatness of $\nabla$ implies that $\mathcal F_0$ is locally free. 

\hfill

\begin{proposition}
\label{tangent_and_parallel}
The locally free sheaf $\mathcal F_0$ on $X$ is naturally isomorphic to $\mathcal TX$.
\end{proposition}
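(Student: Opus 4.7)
\proof
The plan is to exhibit an explicit natural map $\Phi\colon \mathcal TX\to \mathcal F_0$ by horizontal lifting and contraction with $\omega^\C$, then to show it is a stalk-wise isomorphism using the flatness of the Liouville--Arnold connection.

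At every point $p = (x,w)\in X^\C$ the tangent space decomposes as $T_pX^\C = T_xX\oplus T_w\bar X$, with the second summand equal to $T_p(\pi^{-1}(x))$. The horizontal subbundle $T_xX\oplus 0$ is Lagrangian for $\omega^\C$: in local coordinates $\omega^\C = \sum \tilde h_{ij}\, dz_i\wedge dw_j$ pairs only $dz$-components with $dw$-components. Given a holomorphic vector field $V$ on an open $U\subset X$, let $\tilde V$ be its horizontal lift to $\pi^{-1}(U)$. Since the horizontal subbundle is Lagrangian, $\tilde V\cntrct\omega^\C$ annihilates horizontal tangent vectors and therefore descends to a section of $\Omega(X^\C/X)$. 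I set
\begin{equation*}
\Phi(V) := \tilde V\cntrct \omega^\C \in \pi_*\Omega(X^\C/X)(U).
\end{equation*}
In coordinates, writing $V = \sum V^i\6_{z_i}$ yields $\Phi(V) = \sum_{i,j} V^i(z)\,\tilde h_{ij}(z,w)\, dw_j$.

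Next I would verify that $\Phi(V)$ lies in $\mathcal F_0$, i.e., is $\nabla$-parallel along every fiber. A short coordinate computation using~(\ref{affine_connection_def}) produces the Christoffel symbols $\nabla_{\6_{w_m}}\6_{w_l} = \sum_r \Gamma^r_{ml}\6_{w_r}$ with $\Gamma^r_{ml} = \sum_i (\tilde h^{-1})^{ri}\6_{w_m}\tilde h_{il}$. Applying the induced (dual) connection to $\Phi(V)$, the coefficient of $dw_s$ in $\nabla_{\6_{w_m}}\Phi(V)$ becomes
\begin{equation*}
\sum_i V^i\,\6_{w_m}\tilde h_{is} \;-\; \sum_{i,j,k} V^i\,\tilde h_{ij}(\tilde h^{-1})^{jk}\,\6_{w_m}\tilde h_{ks} \;=\; 0,
\end{equation*}
the cancellation being immediate after contracting the inverse matrix. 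Hence $\Phi(V)\in\mathcal F_0(U)$.

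Finally I would show that $\Phi$ is an isomorphism of locally free $\calO_X$-modules. The value of $\Phi(V)$ at the diagonal point $\Delta(x)$ is $\sum_{i,j} V^i(x)\, h_{ij}(x)\, dw_j$, which, under the natural identification of the cotangent space to the fiber $\pi^{-1}(x)$ at $\Delta(x)$ with $T^*_x\bar X$, corresponds to the contraction $V(x)\cntrct\omega|_x$. The assignment $v\mapsto v\cntrct\omega|_x$ is an isomorphism $T_xX\to T^*_x\bar X$ by non-degeneracy of the Kähler form. Since $\nabla$ is flat and torsion-free along the fiber, any parallel section of $\Omega(X^\C/X)$ over $\pi^{-1}(x)$ is uniquely determined by its value at the single point $\Delta(x)$. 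Therefore the induced map on stalks $(\mathcal TX)_x\to (\mathcal F_0)_x$ is bijective, and $\Phi$ is an isomorphism of sheaves.

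The main obstacle is the parallelism check in the middle step. The coordinate computation is short, but conceptually the relevant fact is that a Lagrangian fibration of a holomorphic symplectic manifold together with a transverse Lagrangian distribution produces, via contraction with the symplectic form, a collection of parallel sections of the relative cotangent bundle with respect to the induced flat connection; closedness of $\omega^\C$ is the essential input, in direct analogy with the classical Liouville--Arnold action-angle construction.
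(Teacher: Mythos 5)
Your proof is correct and follows essentially the same route as the paper: the map is the same contraction $\chi\mapsto(\chi,0)\cntrct\omega^\C$ of the canonical horizontal lift with the complexified K\"ahler form, and the isomorphism is deduced in the same way from non-degeneracy of $\omega^\C$ together with the fact that a parallel section is determined by its value at a point of the (connected) fiber. The only difference is that you check $\nabla$-parallelism by a coordinate computation with Christoffel symbols, whereas the paper does it invariantly via the Cartan formula and the Lagrangian property of the fibers; both computations are sound.
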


\begin{proof} A $1$-form $\alpha\in\Omega({X^\C/X})$ is parallel if and only if for every vector fields $V, U$ tangent to the fibers
\begin{equation}
    (\nabla_V U)\cntrct \alpha = V(U\cntrct\alpha)
\end{equation}

Let $\chi $ be a holomorphic vector field on $X$. It defines a vector filed $\chi_0 := (\chi,0)$ on $\pi^{-1}(U)\subset X^\C\subset X\times \overline{X}$. Then the form $\alpha:=\chi_0\cntrct\omega^\C$ is $\nabla$-parallel. Indeed,

\begin{gather*}
    (\nabla_V U)\cntrct\alpha = -\omega^\C(\nabla_V U,\chi_0) = d(U\cntrct\omega^\C)(\chi_0, V) = \\
    = \chi_0( \omega^\C(U,V)) - V(\omega^\C(U,\chi_0) - \omega^\C (U,[\chi_0,V])
\end{gather*}
The second equality follows from the definition of $\nabla$ and the third one from the Cartan formula for the de Rham differential. The term $\chi_0(\omega^\C(U,V))$ vanishes as the fibers are Lagrangian. The term $\omega^\C(U,[\chi_0,V])$ vanishes for the same reason because $[\chi_0, V]$ is tangent to the fibers. Hence
$$
(\nabla_V U)\cntrct\alpha = V(\omega^\C(\chi_0,U)) = V(U\cntrct\alpha)
$$
and $\alpha$ is indeed parallel.

The map of locally free sheaves $\mathcal TX\to \mathcal F_0$ sending $\chi$ to $\alpha$ is injective as the $2$-form $\omega^\C$ is non-degenerate. The equality of the ranks of $\mathcal TX$ and $\mathcal F_0$ implies that this map is an isomorphism.
\end{proof}

\hfill

The vanishing of the torsion of $\nabla$ implies that every parallel form is closed. Shrinking $X^\C$ if necessary we may assume that the fibers of $\pi\colon X^\C\to X$ are simply connected. Consider the map of sheaves of $\mathcal O_X$-modules
$$
d\colon \pi_*\mathcal O_{X^\C/X} \to \Omega({X^\C/X})
$$
where $d$ is the de Rham differential. Notice that it is indeed a map of sheaves of $\mathcal O_X$-modules. We define $\mathcal F\subset \pi_*\mathcal O_{X^\C/X}$ to be the preimage of $\mathcal F_0\subset \Omega({X^\C/X}) $ by the map $d$.

By the assumption of simply-connectedness of the fibers the map $d\colon \mathcal F\to \mathcal F_0\cong\mathcal TX$ is surjective with the kernel $\mathcal O_X$. We obtain an extension
\begin{equation}
\label{extension_Feix}
0 \to \mathcal O_X \to \mathcal F \to \mathcal TX\to 0
\end{equation}

\hfill

\remark This extension is not in general non-trivial as an extension of locally free sheaves of $\mathcal O_X$-modules. Nevertheless, there exists a {\em real} splitting of the short exact sequence~(\ref{extension_Feix}). We have a $C^\infty(X)$-linear map
\begin{equation}
    ev_\Delta\colon C^\infty(\mathcal F) \to C^\infty(X)\:\:\:\:\:\:\:\:\:\: ev_\Delta (f):= f\restrict{\Delta(X)}
\end{equation}
which is the desired splitting.

\hfill

It will be more convenient for us to work with the dualiziation of the short exact sequence~(\ref{extension_Feix})
\begin{equation}
\label{extension_Feix_dual}
    0 \to \Omega X \to \mathcal F^\vee \to \mathcal O_X\to 0
\end{equation}

\hfill

\begin{theorem}\label{very_nice_theorem} The sheaf $\mathcal F^\vee$ is the twistor sheaf of $X$ (\ref{twist_def}).
\end{theorem}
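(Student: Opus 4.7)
The strategy is to identify $\mathcal{F}^\vee$ with the twistor sheaf by computing the class of the extension
\[
0 \to \Omega X \to \mathcal{F}^\vee \to \mathcal{O}_X \to 0
\]
in $\Ext^1(\mathcal{O}_X,\Omega X) \cong H^1(X,\Omega X) \cong H^{1,1}(X)$ via \v{C}ech cohomology, and matching it with the \v{C}ech representative of $[\omega]$. Dualization canonically identifies this $\Ext$-group with $\Ext^1(\mathcal{T}X, \mathcal{O}_X)$, so it is enough to show that the original sequence $0 \to \mathcal{O}_X \to \mathcal{F} \to \mathcal{T}X \to 0$ has class $[\omega]$ in $H^1(X,\Omega X)$.

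Cover $X$ by open sets $\{U_\alpha\}$ small enough to admit real-analytic local K\"ahler potentials $\omega|_{U_\alpha} = i\partial\overline{\partial} K_\alpha$, and extend each $K_\alpha$ holomorphically to $\tilde K_\alpha$ on $\pi^{-1}(U_\alpha) \subset X^\C$, so that in local coordinates $(z_i,w_j)$
\[
\omega^\C = i\sum_{i,j} \frac{\partial^2 \tilde K_\alpha}{\partial z_i\, \partial w_j}\, dz_i \wedge dw_j.
\]
For a local holomorphic vector field $\chi$ on $U_\alpha$, let $\chi_0 = (\chi,0)$ denote its extension by zero to $X^\C$, and define a local holomorphic splitting of $d\colon \mathcal{F} \to \mathcal{T}X$ by
\[
s_\alpha(\chi) \;:=\; i\,\chi_0(\tilde K_\alpha).
\]
A short coordinate computation gives $d_w s_\alpha(\chi) = \chi_0 \cntrct \omega^\C$, which by \ref{tangent_and_parallel} corresponds to $\chi$ under $\mathcal{T}X \cong \mathcal{F}_0$; hence $s_\alpha(\chi)$ lies in $\mathcal{F}(U_\alpha)$ and maps to $\chi$ under $d$.

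To compute the \v{C}ech cocycle $\{s_\alpha - s_\beta\}$ representing the extension class, observe that on overlaps the equality of $\omega^\C$ forces $\tilde K_\alpha - \tilde K_\beta$ to have vanishing mixed derivative $\partial^2/\partial z_i\,\partial w_j$, and hence to decompose as $a_{\alpha\beta}(z) + b_{\alpha\beta}(w)$ with $a_{\alpha\beta}$ holomorphic on $U_\alpha\cap U_\beta$. Therefore $(s_\alpha - s_\beta)(\chi) = i\,\chi(a_{\alpha\beta})$, so the cocycle equals $\{i\, d a_{\alpha\beta}\}\in \check{C}^1(\{U_\alpha\}, \Omega X)$. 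Restricting $\tilde K_\alpha - \tilde K_\beta$ to the real-analytic diagonal, and using that $K_\alpha - K_\beta$ is a real pluriharmonic function on $U_\alpha\cap U_\beta$, one obtains $b_{\alpha\beta}(\overline{z}) = \overline{a_{\alpha\beta}(z)}$, so $K_\alpha - K_\beta = 2\,\Re a_{\alpha\beta}$. Writing $\omega = \overline{\partial}(-i\,\partial K_\alpha)$ locally and applying the \v{C}ech--Dolbeault isomorphism produces the \v{C}ech representative $\{-i\,\partial(K_\alpha - K_\beta)\} = \{-i\, da_{\alpha\beta}\}$ for $[\omega]$. The two cocycles thus agree up to a sign dictated by the conventions for extension classes, $\Ext$-duality, and the \v{C}ech--Dolbeault isomorphism; I expect the careful bookkeeping of these signs and of the factors of $i$ to be the principal technical obstacle, with the rest of the argument being formal.
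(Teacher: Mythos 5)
Your proposal follows essentially the same route as the paper: real-analytic local K\"ahler potentials $f_i$, their holomorphic extensions $\tilde f_i$ to $X^{\C}$, the local splittings $\chi\mapsto \sqrt{-1}\,\chi_0(\tilde f_i)$ of \eqref{extension_Feix}, and a comparison of the resulting \v{C}ech cocycle $\sqrt{-1}\,\partial(f_i-f_j)$ with the \v{C}ech--Dolbeault representative of $[\omega]$. The residual sign you flag is only an artifact of orienting the \v{C}ech differential as $\sigma_\alpha-\sigma_\beta$ rather than $\sigma_\beta-\sigma_\alpha$ in the Dolbeault-to-\v{C}ech zig-zag; with the convention used consistently on both sides (as in the paper's Steps 1 and 3) the two cocycles coincide on the nose.
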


\pstep The cohomology of the sheaf $\Omega X$ can be computed by two different complexes. The first one is the $\overline{\partial}$-complex
\begin{equation}
    \Lambda^{1,\bullet}(X) = \{\:0\to\Lambda^{1,0}(X) \to \Lambda^{1,1}(X) \to \Lambda^{1,2}(X)\to...\:\}
\end{equation}
Now let us choose a locally finite covering of $X$ by affine charts so that $X = \bigcup\limits_i U_i$. Then the \v{C}ech complex $\check{C}^\bullet(\Omega X)$ assosiated with this covering also computes the cohomology of $\Omega X$. This complex is given explicitly as
\begin{equation}
    \check{C}^\bullet(\Omega X) = \{\:0\to \bigoplus\limits_{i} \Omega X(U_i) \to \bigoplus\limits_{i<j}\Omega X(U_{ij})\to\bigoplus\limits_{i<j<k}\Omega X(U_{ijk})\to...\:\}
\end{equation}
where for any subset $I$ of indices we denote $\bigcap\limits_{i\in I} U_i$ by $U_I$. 

Let $[\omega]\in H^1(\Lambda^{1,\bullet}(X))$ be a cohomology class represented by the closed K\"ahler form $\omega\in\Lambda^{1,1}(X)$. Let $\omega_i$ be the restriction of $\omega$ to $U_i$. On every open subset $U_i$ choose a K\"ahler potential $f_i$ i.e. the function $f_i\colon U_i\to\R$ such that
\begin{equation}
    \sqrt{-1}\partial\overline{\partial}f_i = \omega_i
\end{equation}
For every pair of indices $i<j$ consider the holomorphic form $\eta_{ij}:=\sqrt{-1}\partial(f_i\restrict{U_{ij}} - f_j\restrict{U_{ij}})\in\Omega X(U_{ij})$. We claim that the element $(\eta_{ij})\in \bigoplus\limits_{i<j}\Omega X(U_{ij})$ represents the same cohomology class in $H^1(X,\Omega X)$ as $[\omega]$. That follows from standard homological algebra arguments. More precisely, one first identifies the cohomology groups of $\Lambda^{1,\bullet}(X)$ and $\check{C}^\bullet(\Omega X)$ with the cohomology groups of the total complex of the following bicomplex
\begin{equation}
    A^{p,q} = \bigoplus\limits_{|I|=q}\Omega^{1,p}(U_I)
\end{equation}
and then use a simple diagram chasing. Details can be found for example in Stacks project (\cite{Stacks_project} Part 1, Ch. 12, Section 12.25).

{\bf Step 2:} As the K\"ahler form $\omega$ is real analytic, a local K\"ahler potential $f_i\colon U_i\to\R$ may also be chosen to be real analytic. Let $\tilde{f_i}$ be its holomorphic extension to $U_i^\C\subset U_i\times \overline{U_i}$. Then
\begin{equation}
    \omega^\C\restrict{U_i^\C} = \sqrt{-1}d_0 d_1 \tilde{f_i}
\end{equation}
where $d_0$ is the component of $d$ in the direction of $U_i$ and $d_1$ is the component of $d$ in the direction of $\overline{U_i}$. By the proof of \ref{tangent_and_parallel} the map sending a holomorphic vector field $\chi$ on $X$ to $\chi_0\cntrct\omega^\C$ induces an isomorphism between the tangent bundle $\mathcal TX$ and the bundle $\mathcal F_0$ of parallel $1$-forms on the fibers of $\pi\colon X^\C\to X$. Locally
\begin{gather*}
    \chi_0\cntrct \omega^\C = \sqrt{-1} \chi_0\cntrct d_0d_1\tilde{f_i} = \sqrt{-1} \chi_0\cntrct d d_1\tilde{f_i} = \\
    = \sqrt{-1} \mathsf L_{\chi_0}d_1 \tilde{f_i} = \sqrt{-1} d_1(\chi_0(\tilde{f_i})) = d_1 (\sqrt{-1}\tilde{\chi(f_i)})
\end{gather*}
We thus obtain a local holomorphic splitting $\iota_i\colon \mathcal TX\to \mathcal F$ of the short exact sequence~(\ref{extension_Feix})
\begin{equation}
\label{splitting}
    \iota_i(\chi) := \sqrt{-1}\tilde{\chi(f_i)}
\end{equation}

{\bf Step 3:} The short exact sequence~(\ref{extension_Feix_dual}) induces the long exact sequence of cohomology groups
\begin{equation}
    0 \to H^0(X,\Omega X) \to H^0(X,\mathcal F^\vee) \to H^0(X,\mathcal O_X) \to H^1(X,\Omega X)\to...
\end{equation}
By a standard result from homological algebra the class of the extension $\mathcal F^\vee$ in $H^1(X,\Omega X)$ is the image of the unit section $\mathbb 1_X\in H^0(X,\mathcal O_X)$ in $H^1(X,\Omega X)$. We compute it in terms of the \v{C}ech cohomology. One has a complex of short exact sequences

$$
\begin{CD}
0 @>>> \bigoplus\limits_{i}\Omega X(U_{i}) @>>> \bigoplus\limits_{i}\mathcal F^\vee(U_{i}) @>>> \bigoplus\limits_{i}\mathcal O_X(U_{i}) @>>> & 0\\
@. @VVV @VVV @VVV @.\\
0 @>>> \bigoplus\limits_{i<j}\Omega X(U_{ij}) @>>> \bigoplus\limits_{i<j}\mathcal F^\vee(U_{ij}) @>>> \bigoplus\limits_{i<j}\mathcal O_X(U_{ij}) @>>> 0
\end{CD}
$$
The morphism $H^0(X,\mathcal O_X)\to H^1(X,\Omega X)$ is given explicitly as follows. In Step 2 we constructed a local splitting of the short exact sequence~(\ref{extension_Feix_dual}). Hence we have a lift of the element $(\mathbb 1\restrict{U_i})\in\bigoplus\limits_i\mathcal O(U_i)$ to an element of $\bigoplus\limits_{i}\mathcal F^\vee(U_{i})$. We shall denote its component in $\mathcal F^\vee(U_i)$ by $\mathbb 1_i$. It is uniquely defined by the properties
\begin{gather*}
    \mathbb 1_i(h) = h
\end{gather*}
for every holomorphic function $h$ on $X$, and
\begin{gather*}
    \mathbb 1_i(\tilde{\chi(f_i)}) = 0
\end{gather*}
for any holomorphic every vector field $\chi$ on $X$.

The downward pointing differential sends $(\mathbb 1_i)\in \bigoplus\limits_{i}\mathcal F^\vee(U_{i})$ to $(\mathbb 1_j\restrict{U_{ij}} - \mathbb 1_i\restrict{U_{ij}}) \in \bigoplus\limits_{i<j}\mathcal F^\vee(U_{ij})$. This last element in fact lies in the image of $\bigoplus\limits_{i}\Omega X(U_{i})$. Let $(\eta_{ij}')\in \bigoplus\limits_{i}\Omega X(U_{i})$ be its preimage. Then for every local vector field $\chi$ on $X$
\begin{gather*}
    \eta_{ij}'(\chi) = (\mathbb 1_j\restrict{U_{ij}} - \mathbb 1_i\restrict{U_{ij}}) (\iota_i(\chi)) = \sqrt{-1}(\mathbb 1_j\restrict{U_{ij}} - \mathbb 1_i\restrict{U_{ij}}) (\tilde{\chi(f_i)}) = \\
    = \sqrt{-1} \mathbb 1_j\restrict{U_{ij}}(\tilde{\chi(f_i)} - \tilde{\chi(f_j)}) = \sqrt{-1}\chi (f_i - f_j)
\end{gather*}
where the last identity holds as the function $\chi(f_i) - \chi(f_j)$ is a holomorphic function on $X$. We now readily see that $\eta_{ij}'= \sqrt{-1}\partial(f_i - f_j)$. By the results of Step 1 of the proof $\eta_{ij}'=\eta_{ij}$. Consequently, the element $(\eta_{ij}')\in \bigoplus\limits_{i}\Omega X(U_{i})$ represents the same cohomology class in $H^1(X,\Omega X)$ as the K\"ahler form $\omega$. By \ref{twist_def} the locally free sheaf $\mathcal F^\vee$ is the twistor sheaf.
\endproof{}

\hfill

\begin{corollary} 
\label{twistedtwist}
For a small enough neighbourhood $U$ of the zero section of $T^*M$ there exists a hyperk\"ahler metric on $U$  with the following property. Let $T\colon \Tw(U)\to\mathbb P^1$ be the twistor deformation. Let $\mathcal E$ be the twistor sheaf of $X$ and $E$ be the corresponding vector bundle. Then their exist a holomorphic open embedding $u\colon T^{-1}(\mathbb A^1)\to\Tot(E)$ commuting with the projection to $\mathbb A^1$. In other words, the following diagram commutes.

$$
\xymatrix{
\Tot(E) \ar[d]^{T'} & \:T^{-1}(\mathbb A^1)\: \ar@{_{(}->}[l]_u \ar[d] \ar@{^{(}->}[r] & \Tw(U) \ar[d]^T\\
\mathbb A^1 \ar@{=}[r] & \mathbb A^1\: \ar@{^{(}->}[r] & \mathbb P^1
}
$$
\end{corollary}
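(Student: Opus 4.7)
The plan is to derive the open embedding $u$ directly from Feix's explicit construction of the twistor space in \cite{_Feix1_}, using Theorem~\ref{very_nice_theorem} to translate her local analytic description into the sheaf-theoretic language of the twistor bundle $E$. Feix builds $\Tw(U)$ by specifying, for each $\lambda \in \mathbb{A}^1 \subset \mathbb{P}^1$, a complex structure $J_\lambda$ on the underlying real manifold $U$. Working on an affine cover $\{U_i\}$ of $X$ with real analytic K\"ahler potentials $f_i$ and their holomorphic extensions $\tilde{f_i}$ to $U_i^\C$, the local $J_\lambda$-holomorphic functions on $U$ near the zero section are generated by pullbacks of holomorphic functions from $U_i$ together with expressions of the form $c + \lambda\sqrt{-1}\tilde{\chi(f_i)}$ for $\chi$ a local holomorphic vector field on $U_i$. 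These are precisely the generators of the sheaf $\mathcal F$ introduced in the construction preceding Theorem~\ref{very_nice_theorem}.

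Next I would assemble the local data into a single holomorphic map. The description above shows that on a sufficiently small neighbourhood of the zero section $X \subset U$, the fiberwise linear-in-$\lambda$ functions on $(U, J_\lambda)$ over $X$ assemble into local sections of $\mathcal F^\vee$, while the projection $T\colon T^{-1}(\mathbb A^1) \to \mathbb A^1$ corresponds to the surjection $\rho\colon \mathcal F^\vee \twoheadrightarrow \mathcal O_X$ appearing in~(\ref{extension_Feix_dual}). After shrinking $U$ so that Feix's coordinate description is valid throughout $U$, this yields a holomorphic map $u\colon T^{-1}(\mathbb A^1) \to \Tot(\mathcal F^\vee)$ commuting with the projections to $\mathbb A^1$ by construction. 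Theorem~\ref{very_nice_theorem} then identifies $\mathcal F^\vee$ with the twistor sheaf $\mathcal E$, so that the target of $u$ is $\Tot(E)$.

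Fiberwise, $u$ restricts to an open embedding onto a neighbourhood of the zero section in the corresponding twisted cotangent bundle (for $\lambda\ne 0$) or in $T^*X$ (for $\lambda=0$); combined with its holomorphicity this forces $u$ to be an open embedding globally. The main obstacle is the bookkeeping in the first step: one must extract from Feix's formulas the exact $\lambda$-dependence of the $J_\lambda$-holomorphic functions on $U$ and verify that on overlaps $U_i\cap U_j$ these local trivialisations glue via the same \v{C}ech cocycle $\sqrt{-1}\partial(f_i-f_j)$ that represents the class $[\omega]\in H^1(X,\Omega X)$. This last verification is essentially the cocycle computation already carried out in Step~3 of the proof of Theorem~\ref{very_nice_theorem}, so the identification of $\mathcal F^\vee$ with $\mathcal E$ feeds back into the construction of $u$ in a compatible way.
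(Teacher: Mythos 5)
Your proposal is correct and follows essentially the same route as the paper: both reduce the statement to Feix's construction of the twistor space together with Theorem~\ref{very_nice_theorem} identifying $\mathcal F^\vee$ with the twistor sheaf $\mathcal E$. The paper's proof is shorter only because it cites directly that Feix builds the twistor space by gluing $\Tot(F^*)$ and $\Tot(\overline{F^*})$ over $\P^1$, so that $T^{-1}(\mathbb A^1)\cong\Tot(F^*)$ by construction, whereas you re-derive this identification from the description of the $J_\lambda$-holomorphic functions.
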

\begin{proof} Feix in \cite{_Feix1_} constructs a complex manifold $Z$ with a projection $T\colon Z\to \mathbb P^1$ which turns out to be the twistor projection for some hyperk\"ahler metric on $U$. She obtains $Z$ by gluing the complex manifolds $\Tot(F^*)$ and $\Tot(\bar{F^*})$ over $\P^1$ by means of a certain holomorphic map. In particular, by the very construction of her the manifold $\Tot(F^*)$ is isomorphic to $T^{-1}(\mathbb A^1)$. The statement of the corollary now follows from \ref{very_nice_theorem}.
\end{proof}

\hfill


{\small 
\noindent {\sc Anna Abasheva\\
{\sc Laboratory of Algebraic Geometry, HSE University\\
Department of Mathematics, Usacheva ul., 6, Moscow, Russia}\\
\\
and:\\
{\sc Columbia University, Department of Mathematics,\\
2990 Broadway, New York, NY 10027}\\
\\
also:\\
{\sc Independent University of Moscow\\
Bolshoy Vlasievskiy per., 11, Moscow, Russia}\\
\tt  anabasheva(at)yandex.ru}.
 }

\end{document}